\tikzset{every picture/.style={line width=0.75pt}} 
\definecolor{shadecolor}{rgb}{0.95,0.95,0.95}
\numberwithin{equation}{section}
\numberwithin{figure}{section}
\numberwithin{table}{section}
\theoremstyle{plain} 
\newtheorem{thm}{\protect\theoremname}[section]
\newtheorem{prop}[thm]{\protect\propositionname}
\newtheorem{cor}[thm]{\protect\corollaryname}
\newtheorem{lem}[thm]{\protect\lemmaname}
\theoremstyle{definition} 
\newtheorem{defn}[thm]{\protect\definitionname}
\newtheorem{example}[thm]{\protect\examplename}
\newtheorem{question}[thm]{\protect\questionname}
\theoremstyle{remark} 
\newtheorem{rem}[thm]{\protect\remarkname}
\newtheorem*{ack}{\protect\acknowledgmentkname}
\numberwithin{equation}{section}
\newcommand{\giosue}[1]{{{\color{red}#1}}}
\providecommand{\corollaryname}{Corollary}
\providecommand{\definitionname}{Definition}
\providecommand{\examplename}{Example}
\providecommand{\propositionname}{Proposition}
\providecommand{\remarkname}{Remark}
\providecommand{\theoremname}{Theorem}
\providecommand{\lemmaname}{Lemma}
\providecommand{\problemname}{Problem}
\providecommand{\acknowledgmentkname}{Acknowledgements}
\providecommand{\claimname}{Claim}
\providecommand{\conjecturename}{Conjecture}
\providecommand{\questionname}{Question}
\newcommand{\pkg}{\texttt{GKMtools.jl}}
\newcommand{\pkgwebsite}{\href{https://mgemath.github.io/GKMtools.jl/stable/}{https://mgemath.github.io/GKMtools.jl/}}
\newcommand{\pkgversion}{$0.14.2$}
\newcommand{\ev}{\mathrm{ev}}
\begin{document}

\title[Computations in Equivar. GW Theory of GKM spaces]{Computations in Equivariant Gromov--Witten theory of GKM spaces}


\author[Holmes]{Daniel Holmes}
\address{ISTA (Institute of Science and Technology Austria)}
\email{daniel.holmes@ist.ac.at}
\urladdr{https://www.daniel-holmes.at}

\author[Muratore]{Giosu{\`e} Muratore}
\address{CEMS.UL (University of Lisbon), and  
COPELABS/DEISI (Lus\'ofona University)}
\email{gemuratore@fc.ul.pt}
\urladdr{sites.google.com/view/giosue-muratore}


\subjclass[2020]{Primary 14L30, 14N35, 
53D20; Secondary 53D45, 14-04, 14N10}


\keywords{Gromov--Witten theory, torus actions, algebraic and Hamiltonian GKM spaces}

\begin{abstract}
We study equivariant Gromov--Witten invariants and quantum cohomology in GKM theory. Building on the localization formula, we prove that the resulting expression is independent of the choice of compatible connection, and provide an equivalent formulation without auxiliary choices. Motivated by this theoretical refinement, we develop a software package, 
$\mathtt{GKMtools . jl}$, 
that implements the computation of equivariant GW invariants and quantum products directly from the GKM graph. We apply our framework to several geometric settings: Calabi--Yau rank two vector bundles on the projective line, where we obtain a new proof for a recent connection to Donaldson--Thomas theory of Kronecker quivers; twisted flag manifolds, which give symplectic but non-algebraic examples of GKM spaces; realizability questions for abstract GKM graphs; classical enumerative problems involving curves in hyperplanes; and quantum Schubert calculus for smooth Schubert varieties. These results demonstrate both the theoretical flexibility of GKM methods and the effectiveness of computational tools in exploring new phenomena.
\end{abstract}

\maketitle
    
\section{Introduction}
GKM theory (after Goresky--Kottwitz--MacPherson \cite{GKM98}) provides a combinatorial framework to study the geometry and topology of certain $T$-spaces with finitely many fixed points.
The essential idea is that the equivariant cohomology ring can be encoded in a finite graph, the \emph{GKM graph}, together with edge-label data arising from the isotropy representations. This correspondence has become a valuable tool in equivariant symplectic geometry, algebraic geometry, and representation theory, as it translates global geometric phenomena into tractable combinatorial structures.

Beyond cohomology, GKM theory has also found applications in enumerative geometry, in particular in the study of equivariant Gromov--Witten invariants and quantum cohomology. 
Localization techniques reduce the evaluation of these invariants to contributions associated to fixed loci of the torus action on the moduli space, which can themselves be expressed in terms of GKM graphs. This approach has led to explicit formulas for Gromov--Witten invariants of GKM spaces \cite{LS17, Hirschi_2023}. Nevertheless, the resulting expressions are often complicated by auxiliary data, most notably the requirement to identify the compatible connection on the GKM graph induced by the geometry of the space, and by the intricate combinatorics of the resulting contributions.

The purpose of this paper is twofold. On the theoretical side, we revisit the localization formula for equivariant Gromov--Witten invariants in the GKM setting, and establish a reformulation that is independent of any choice of connection. On the computational side, we introduce a software package, {\pkg}, designed to make the theoretical machinery of GKM theory effective in explicit calculations. The package provides a general framework for working with GKM graphs, for computing equivariant Gromov--Witten invariants, and for evaluating quantum products.

Let us highlight three of our mathematical results.

\begin{thm}[Theorem \ref{cor:GKM_determines_GW}]
    The equivariant Gromov–Witten invariants of an algebraic or Hamiltonian GKM space can be computed purely in terms of the GKM graph, without
    knowledge of the compatible connection induced by the space.
\end{thm}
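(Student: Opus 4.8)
The plan is to reduce the statement to the connection-independence of the localization formula together with a routine audit of the formula's ingredients. First I would recall the virtual-localization description of the equivariant Gromov--Witten invariants of a GKM space $X$ following \cite{LS17, Hirschi_2023}: the torus-fixed loci in $\overline{M}_{g,n}(X,\beta)$ are indexed by decorated graphs, and each such locus contributes an explicit rational function of the equivariant parameters, built from the tangent--obstruction theory of stable maps. In the algebraic case this is Graber--Pandharipande virtual localization, and in the Hamiltonian case one invokes the corresponding equivariant virtual localization, so the two settings are handled uniformly.

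Next I would go through the ingredients of a fixed-locus contribution and check that each is expressible using only the labelled GKM graph $\Gamma$ of $X$ and a compatible connection on it. The vertex and node/flag contributions depend only on the fixed points $X^T$ (the vertices of $\Gamma$) and the isotropy weights there (the labels of the edges at each vertex). The curve class $\beta$, the effective decompositions $\beta=\sum_e d_e[C_e]$ over the edges, the pairing with $c_1(X)$, and the restrictions $\ev_i^*\gamma_i$ of the equivariant insertions to fixed points are combinatorial, because $H_2(X)$, the classes of the invariant curves $C_e$, and equivariant cohomology classes are all read off from $\Gamma$. The single genuinely connection-dependent ingredient is the edge contribution coming from the equivariant splitting $N_{C_e/X}=\bigoplus_j L_j$: the data of this splitting at the two endpoints of $e$ is exactly the bijection of edge-labels furnished by the geometric compatible connection $\nabla^X$, and the degree of each $L_j$ is recovered from $\nabla^X$ and the label $\alpha_e$ via the relation that the difference of the weights of $L_j$ at the two fixed points is an integer multiple of $\alpha_e$. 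After this step the invariant is written as a function of $(\Gamma,\nabla^X)$ alone.

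Finally I would invoke the independence result established earlier in the paper: the value of this expression is unchanged when $\nabla^X$ is replaced by any compatible connection on $\Gamma$. Since $\Gamma$ arises from an algebraic or Hamiltonian GKM space it admits at least one compatible connection, namely $\nabla^X$, and a compatible connection is purely combinatorial data attached to $(\Gamma,\{\alpha_e\})$; hence the invariant equals the value of the localization formula for any combinatorially chosen compatible connection, and in particular depends only on $\Gamma$ — equivalently, one may phrase the conclusion through the connection-free reformulation. I expect the only real obstacle to be the connection-independence input used in this last step; granting it, the substance of the argument lies in the audit of the middle step, the delicate point being the normal-bundle splitting, where one must verify that the equivariant Euler classes of $H^0$ and $H^1$ over each edge are genuinely reconstructed from the connection and the labels, with no residual geometric datum of $X$ entering.
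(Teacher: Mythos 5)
Your proposal is correct and follows essentially the same route as the paper: the paper's proof consists precisely of combining the localization formula of Theorems~\ref{thm:LS17} and~\ref{thm:Hirschi} with the connection-independence of Theorem~\ref{thm:con_indep} and the combinatorial determination of the curve-class relations in Proposition~\ref{prop:curve_classes}, which is exactly the audit you describe. Your identification of the normal-bundle splitting (equivalently, the $h$-factors) as the sole connection-dependent ingredient matches the paper's analysis.
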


\begin{thm}[Theorem \ref{thm:gw_equivariant_CY}, \S\ref{sec:BPS}]
\label{thm:intro_GW_DT_Quiver}
    Let $X_k$ be the GKM variety given by the total space of $\mathcal{O}_{\mathbb{P}^1}(k-1)\oplus \mathcal{O}_{\mathbb{P}^1}(-k-1)$, linearized by an algebraic torus $T$ acting with exactly two fixed points. Suppose $k\ge 1$.
    If $X_k$ is \emph{equivariantly Calabi--Yau}, that is $c_1^T(T_{X_k})=0$, then
    \begin{equation}\label{eq:intro_GW}
    GW_{0,0}^{X_k,d\mathbf{0}_{X_k}} = \frac{(-1)^{d(k+1)-1}}       {d^3k^2}\binom{k^2 d}{d}
    \end{equation}
    where $\mathbf{0}_{X_k}\in H_2(X_k)$ is the class of the zero section.
    Moreover, the associated \emph{BPS} numbers $n_{0,d\mathbf{0}_{X_k}}(X_k)$ satisfy
    \[
    n_{0,d\mathbf{0}_{X_k}}(X_k) = \frac{1}{d(k+1)}D(d, d, k+1)
    \]
    where $D(d,d,m)$ is the diagonal Donaldson--Thomas invariant of degree $d$ of the Kronecker quiver $K_m$.
\end{thm}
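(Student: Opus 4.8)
The plan is to reduce $GW_{0,0}^{X_k,d\mathbf{0}_{X_k}}$, via the connection-free localization formula of Theorem~\ref{cor:GKM_determines_GW}, to a twisted Gromov--Witten integral over $\overline{M}_{0,0}(\mathbb{P}^1,d)$, to evaluate that integral in closed form, and then to pass to the BPS numbers. The GKM graph of $X_k$ is as simple as possible: the torus fixes exactly the two zero-fibres over the fixed points $p_0,p_1\in\mathbb{P}^1$, so the graph has two vertices joined by a single edge $e$ (the zero section $\mathbf{0}_{X_k}$), each vertex carrying the two fibre weights of $\mathcal{O}_{\mathbb{P}^1}(k-1)\oplus\mathcal{O}_{\mathbb{P}^1}(-k-1)$. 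Writing $t$ for the weight of $T_{p_0}\mathbb{P}^1$ and $s$ for one of the fibre weights at $p_1$, the two line-bundle degrees together with the single relation imposed by $c_1^T(T_{X_k})=0$ pin down the remaining three weights in terms of $s$ and $t$. Since $\mathrm{vdim}\,\overline{M}_{0,0}(X_k,d\mathbf{0}_{X_k})=0$ and the Calabi--Yau hypothesis forces the equivariant parameters appearing in the localization contributions to cancel, the invariant is a pure rational number, independent of $s$ and $t$; by Theorem~\ref{cor:GKM_determines_GW} it is a function of the graph alone, so we may compute with any convenient equivariantly Calabi--Yau linearization.

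Every stable map of class $d\mathbf{0}_{X_k}$ factors through the zero section, and on the (arithmetic genus $0$) domain curves one has $R^0\pi_*f^*\mathcal{O}_{\mathbb{P}^1}(-k-1)=0=R^1\pi_*f^*\mathcal{O}_{\mathbb{P}^1}(k-1)$; comparing the obstruction theories of stable maps to $X_k$ and to $\mathbb{P}^1$ therefore gives
\[
GW_{0,0}^{X_k,d\mathbf{0}_{X_k}}=\int_{\overline{M}_{0,0}(\mathbb{P}^1,d)}\frac{e_T\!\bigl(R^1\pi_*f^*\mathcal{O}_{\mathbb{P}^1}(-k-1)\bigr)}{e_T\!\bigl(R^0\pi_*f^*\mathcal{O}_{\mathbb{P}^1}(k-1)\bigr)}.
\]
Atiyah--Bott localization on $\overline{M}_{0,0}(\mathbb{P}^1,d)$ (equivalently, Theorem~\ref{cor:GKM_determines_GW} for the one-edge graph) then expands the right-hand side as the classical sum over Kontsevich trees for degree-$d$ covers of $\mathbb{P}^1$: vertices of alternating colour over $p_0$ and $p_1$, edge multiplicities $d_e$ summing to $d$, edge factors of the familiar $1/d_e^2$ type, and vertex factors that are products of equivariant weights (built from $s$, $t$ and the fibre weights) with genus-$0$ $\psi$-class integrals over the spaces $\overline{M}_{0,n}$.

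This sum is the equivariant counterpart of the Aspinwall--Morrison multiple-cover computation; the limiting case $(k-1,-k-1)=(-1,-1)$ reproduces the familiar $1/d^3$. We evaluate it by packaging the tree sum into a single contour integral and applying Lagrange inversion, thereby identifying $\sum_{d\ge1}GW_{0,0}^{X_k,d\mathbf{0}_{X_k}}\,q^d$ with a hypergeometric series whose coefficients are Fuss--Catalan-type numbers; this produces exactly \eqref{eq:intro_GW}, the sign $(-1)^{d(k+1)-1}$ arising from the $d(k+1)-1=\operatorname{rk}R^1\pi_*f^*\mathcal{O}_{\mathbb{P}^1}(-k-1)$ negative normal weights. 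This step is the main obstacle: keeping track of the $\psi$-integral vertex contributions and carrying out the binomial/residue identity uniformly in $k$ and $d$ is where the genuine work lies, and it is precisely here that {\pkg} is used, first to confirm \eqref{eq:intro_GW} for many small pairs $(k,d)$ and thereby to guide the general argument.

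Finally, in \S\ref{sec:BPS}, M\"obius inversion of the genus-$0$ Gopakumar--Vafa relation $GW_{0,0}^{X_k,d\mathbf{0}_{X_k}}=\sum_{m\mid d}m^{-3}\,n_{0,(d/m)\mathbf{0}_{X_k}}(X_k)$ turns \eqref{eq:intro_GW} into an explicit arithmetic expression for $n_{0,d\mathbf{0}_{X_k}}(X_k)$; in particular $n_{0,\mathbf{0}_{X_k}}(X_k)=(-1)^k$, which already matches $\tfrac{1}{k+1}D(1,1,k+1)=(-1)^k$ because the moduli space of stable $(1,1)$-representations of $K_{k+1}$ is $\mathbb{P}^k$, of signed Euler characteristic $(-1)^k(k+1)$. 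To finish, we invoke the known closed form for the diagonal Donaldson--Thomas invariant $D(d,d,m)$ of the Kronecker quiver $K_m$ (from Reineke's integrality and wall-crossing formulas) and verify that $\tfrac{1}{d(k+1)}D(d,d,k+1)$ agrees term by term with the Gopakumar--Vafa expression; this last comparison is an elementary binomial identity, and it reproves --- now entirely from the GKM side --- the recently observed Gromov--Witten/Donaldson--Thomas correspondence for the spaces $X_k$.
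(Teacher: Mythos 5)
Your overall strategy---localize, reduce to a tree sum, recognize Fuss--Catalan-type numbers, then M\"obius-invert for the BPS statement---is in the same spirit as the paper's proof, and your final BPS paragraph essentially coincides with \S\ref{sec:BPS} (M\"obius inversion of \eqref{eqn:GV_genus_zero} plus Reineke's formula for $D(d,d,m)$). However, the two steps that carry the real content of the theorem are asserted rather than proved.

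First, the claim that the equivariantly Calabi--Yau hypothesis ``forces the equivariant parameters to cancel'' so that the invariant is a pure number is exactly what must be established, and it does not follow from $\mathrm{vdim}=0$: Lemma~\ref{lem:polynomiality_condition} shows that for generic weights with $\mathrm{vdim}=0$ the invariant is a genuinely non-constant degree-zero rational function, and constancy holds only for the special weight configurations of \eqref{eqn:polynomial_cases}. The paper's argument requires (i) the explicit computation of the $h$-factors showing that, when $t_3=-t_1-t_2$, every localization contribution is a polynomial divided by a power of $t_1$, hence the total is a polynomial in $t_2/t_1$, and (ii) Lemma~\ref{lem:polarity} (restriction to the subtorus fixing the zero section pointwise, over which the fixed locus is compact) to exclude a pole at $t_1=0$. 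Your proposal supplies neither; switching to the fiberwise torus of Bryan--Pandharipande does not by itself identify that computation with the invariant for the two-fixed-point torus in the theorem statement.

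Second, your evaluation of the localization sum ``by packaging the tree sum into a single contour integral and applying Lagrange inversion'' is a placeholder, and you concede that the software only confirms small cases and ``guides'' the argument. The paper's route is different and essential: once constancy is known, one may set $t_2=0$, $t_1=1$, which kills $\alpha([1{:}0])$ and hence annihilates every decorated tree having a vertex of valency $\ge 2$ over $[1{:}0]$; only star-shaped trees centered at $[0{:}1]$ survive, indexed by partitions of $d$. The resulting identity is then Lemma~\ref{Lemma:Eisenkoelbl}, proved via the exponential formula and the Fuss--Catalan generating-function identity $G_k(x)=\exp\bigl(\sum_{j\ge1}\tfrac{1}{kj}\binom{kj}{j}x^j\bigr)$. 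Without the specialization trick you face the full bipartite tree sum with $\psi$-class vertex integrals, and without a precise statement and proof of the combinatorial identity the formula \eqref{eq:intro_GW} is not established.
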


\begin{thm}[Theorem~\ref{thm:Schubert}]
    There exist smooth Schubert varieties $X_w$ and classes $x,y\in H^*(X_w)$ such that the quantum product $x\ast y$ has infinitely many $q$-terms.
\end{thm}

We proved Theorem~\ref{thm:intro_GW_DT_Quiver} directly using the localization formula.
Independently, a completely different proof of this result appeared in \cite[Theorem~3.2]{vangarrel2025}. Their approach uses degenerations of log Gromov--Witten invariants and the relation between refined DT invariants of quivers and log BPS invariants of log Calabi--Yau surfaces given by \cite[Theorem~8.13]{MR4157555}.
Yet another approach would be to use the results of \cite{Bryan_Pandharipande_2008} (see Remark~\ref{rem:BP08}).
An early occurrence of this formula in the physics literature is  \cite[Equation~(4.53)]{MR2304432_HEP}.

The contents of the paper are organized as follows. In Section~\ref{sec:GKM} we review the necessary basic notions of GKM theory and describe the relations between curve classes corresponding to edges of the GKM graph (Proposition~\ref{prop:curve_classes}). 
In Section~\ref{sec:GW_and_QH} we recall the definitions of equivariant Gromov--Witten invariants and quantum cohomology, together with the localization formula of \cite{LS17} and \cite{Hirschi_2023}. We then prove in Theorem~\ref{thm:con_indep} that this formula is independent of the choice of a compatible connection on the GKM graph, and provide an equivalent formulation without reference to such a choice. Section~\ref{sec:tool} introduces the package {\pkg} and explains its main functionalities. In Section~\ref{sec:applications_and_examples} we present a series of applications, ranging from concrete calculations to structural results:
\begin{itemize}
\item In Section~\ref{sec:CY_local_models} we compute equivariant Gromov--Witten invariants (with connected domain curves) of Calabi--Yau rank two vector bundles on $\mathbb{P}^1$, classify the cases where these invariants are rational (Lemma~\ref{lem:polynomiality_condition}), and establish explicit formulas (Theorem~\ref{thm:gw_equivariant_CY}). In the equivariantly Calabi--Yau case we recover in a new way a recent relation with diagonal Donaldson--Thomas invariants of the Kronecker quiver (Section~\ref{sec:BPS}).
\item In Section~\ref{sec:twisted_flag} we give explicit examples of equivariant quantum products for twisted flag manifolds (Theorems~\ref{thm:twisted_Qprod} and~\ref{thm:twisted_numbers}). These are Hamiltonian GKM spaces that do not admit compatible Kähler structures and hence fall strictly within the symplectic, non-algebraic category.
\item In Section~\ref{sec:realizability} we show how equivariant Gromov--Witten invariants can be used to prove non-realizability results for certain GKM graphs, ruling out their occurrence as Hamiltonian or algebraic GKM spaces.
\item In Section~\ref{sec:enumerating_curves} we illustrate how the GKM approach extends to classical enumerative questions, such as the enumeration of curves contained in hyperplanes. 
In particular, we give a complete list of all rational cubics in $\mathbb{P}^4$ contained in a plane and meeting an appropriate number of linear subspaces.
\item In Section~\ref{sec:Schubert} we apply the previous results to show that there are smooth Schubert varieties on which some (equivariant and non-equivariant) quantum product has infinitely many $q$-terms (Theorem~\ref{thm:Schubert}).
We also include an interesting example in type $G_2$ where a certain \emph{equivariant} quantum product has infinitely many terms (Proposition~\ref{prop:Schubert:G2_GB}), but we do not know if there is a non-equivariant quantum product with infinitely many terms for this example.
\end{itemize}

Our results show the strength of GKM theoretic methods in Gromov--Witten theory and provide computational infrastructure to explore further examples.
We expect this combination of theoretical and algorithmic developments to serve as a useful resource for future investigations in symplectic geometry, algebraic geometry, and mathematical physics.

While the examples considered in this paper are mostly restricted to genus zero, we intend to deal with positive genus examples in a follow-up paper.

\begin{ack}
    We sincerely thank Pierrick Bousseau, Jim Bryan, Theresia Eisenk{\"o}lbl, Oliver Goertsches, Tam{\'a}s Hausel, Amanda Hirschi, Chiu-Chu Melissa Liu, Leonardo Mihalcea, Anantadulal Paul, Markus Reineke, Silvia Sabatini, Csaba Schneider, Yannik Schuler, and Bal{\'a}zs Szendr{\H o}i for fruitful discussions and helpful comments.
    The first author did this work during his PhD studies at the Institute of Science and Technology Austria (ISTA), supported by the Austrian Science Fund (FWF) grant no. 10.55776/P35847.
    The second author is a member of GNSAGA (INdAM). Moreover, he is supported by FCT - Funda\c{c}\~{a}o para a Ci\^{e}ncia e a Tecnologia, under the project: UID/04561/2025.
\end{ack}

\section{GKM theory}
\label{sec:GKM}

In this section, we recall the necessary background on GKM theory 
(see \cite{GZ01,GKZ22} for more details).
GKM spaces are named after the influential paper by Goresky, Kottwitz, and MacPherson \cite{GKM98}. They provide a large class of examples for which a lot of geometry can be reduced to combinatorial properties of the attached \emph{GKM graph}.

\subsection{GKM spaces}

Throughout this paper, we will deal with two cases simultaneously: the smooth quasi-projective algebraic case and the symplectic Hamiltonian case.
This distinction reflects the fact that Gromov--Witten invariants are defined differently in these two settings. Nevertheless, our results can often be stated in some form that holds in both contexts. The central object of our paper will be a topological space $X$ together with a group action of a group $T$. We call such a structure a $T$-space.

By variety we mean an integral separable complex scheme of finite type.

By $(\mathbb{C}^\times)^r$ we mean $(\mathbb{C}\setminus\{0\})^{\times r}$.
\begin{defn}
    \label{def:GKM_alg}
    A $T$-space $X$ is \emph{algebraic GKM} if
    \begin{enumerate}
        \item $X$ is a smooth quasi-projective variety, $T=(\mathbb{C}^\times)^r$ is a torus and the action is algebraic,\label{item:point_1}
        \item there are only finitely many fixed points, i.e. $0<|X^T|<\infty$, and
        \item for each $p\in X^T$, the $T$-weights of $T_pX$ are pairwise linearly independent.
    \end{enumerate}
\end{defn}

Before defining the notion of \emph{Hamiltonian GKM} spaces, we need to recall the notion of Hamiltonian group action in the context of compact real Lie groups acting on smooth manifolds.

\begin{defn}[{\cite[\S1.1]{Guillemin_Guillemin_Ohsawa_Ginzburg_Karshon_2014}}]
    Let $T$ be a compact real torus acting smoothly on a symplectic manifold $(X,\omega)$. Let $\mathfrak{t}$ be the Lie algebra of $T$ and $\mathfrak{t}^*$ the dual Lie algebra.
    The action is called \emph{Hamiltonian} if it admits a \emph{moment map}, that is, a smooth map $\mu\colon X\rightarrow \mathfrak{t}^*$ such that for any $\xi\in \mathfrak{t}$ we have $\omega(X_\xi,\cdot)=\iota_{X_\xi}\omega=d\langle\mu, \xi\rangle$, where $X_\xi$ is the vector field on $X$ induced by $\xi$.
\end{defn}
A Hamiltonian action automatically preserves the symplectic form $\omega$.
Further important properties of moment maps are recalled in Section~\ref{sec:moment_map}.

We call a $T$-space $X$ \emph{Hamiltonian GKM} 
if $X$ satisfies Definition~\ref{def:GKM_alg}, replacing condition \eqref{item:point_1} with the following one:
\begin{itemize}
    \item[($\star$)] $X$ is a connected manifold with a symplectic form $\omega$ and a compatible almost complex structure $J$. The action of the torus $T=(S^1)^{\times r}$ is Hamiltonian and preserves $J$.\label{item:point_12}
\end{itemize}
An important consequence of this definition is the following: if $T'\subset T$ is a codimension $1$ subtorus, the connected components of $X^{T'}$ are either points or $2$-spheres \cite[\S1.5]{Guillemin_Zara_2002}. Often, this propriety is part of the definition of GKM manifold, see \cite{GSZ12}.

\begin{rem}\label{rem:analytification}
    If $X$ is an algebraic GKM space, its analytification 
    $X^\mathrm{an}$ 
    is naturally a Hamiltonian GKM space. It is enough to restrict the action of $T$ to a maximal compact torus $(S^1)^{\times r}\subset T$. We may prove that this action is Hamiltonian using \cite[Examples~8.1(ii)]{MR1304906}. Indeed, if $X$ is an open subset of a projective variety $\bar{X}\subseteq\mathbb{P}^n$, equivariantly embedded via \cite[Theorem~5.1.25]{MR2838836_Chriss_Ginzburg}, the restriction of the Fubini--Study form is a K{\"a}hler form on $X$.  Taking an appropriate choice of coordinates, there exists a linearization $\rho\colon T\rightarrow U(n+1)$ to the unitary group. The moment map (see \cite[Lemma~2.5]{MR766741}) is defined as
    $$\mu (x) a = \frac{\bar {\mathbf{x}}^t \rho_*(a) \mathbf{x}}{2\pi i\, ||\mathbf{x}||^2},$$
    where $a\in \mathfrak{t}$ and $\mathbf{x}$ is a representative vector for $x\in X\subseteq\mathbb{P}^n$.
    
    So the Hamiltonian case includes the algebraic one. We still distinguish these cases because the fundamental definitions of Gromov--Witten invariants are different in each case.
\end{rem}

\begin{defn}\label{def:GKM_space}
    By \emph{GKM space} we mean an algebraic or Hamiltonian GKM space unless otherwise stated.
    By a \emph{$T$-stable $\mathbb{P}^1$} in a GKM space we mean either a $T$-invariant 2-sphere in a Hamiltonian GKM space or a $T$-invariant (complex) $\mathbb{P}^1$ in an algebraic GKM space.
    This is compatible with the analytification construction of Remark~\ref{rem:analytification}.
\end{defn}

\begin{rem}
    A GKM space has only finitely many $T$-stable $\mathbb{P}^1$, see \cite[Proposition~1.1]{MR2064318_Guillemin_Holm_04} and \cite[Proposition~2.3, Remark~2.4]{GKZ22}.
\end{rem}

\begin{rem}
    The terms \emph{GKM space} or \emph{GKM manifold} are often defined in more general settings, where a broader class of spaces is allowed and 
    different
    assumptions, such as equivariant formality, are made (cf. \cite{GKM98,GKZ22}).
    In our setting, equivariant formality holds by \cite[Proposition~5.8]{MR766741} for Hamiltonian GKM spaces and by \cite[(5.2.2)~Corollary]{Weber_2005} for algebraic GKM spaces.
\end{rem}

\begin{example}\label{example:def_Fl}
    Examples of algebraic GKM spaces include smooth projective toric varieties, and homogeneous spaces $G/P$ where $G$ is a complex reductive algebraic group and $P$ is a parabolic subgroup \cite{Guillemin_Holm_Zara_2006}.
    In type $A_n$, we will write $\mathrm{Fl}(s_1,\dots,s_k)$ for the partial flag variety parametrizing chains
    \[
        0=V_0\subsetneq V_1\subsetneq\dots\subsetneq V_k=\mathbb{C}^{s_1+\dots +s_k}
    \]
    of linear subspaces such that $\dim_\mathbb{C}V_i/V_{i-1}=s_i$ for $i=1,\ldots, k$.
\end{example}

\begin{example}\label{example:Schubert_are_GKM}
    Let $G$ be a connected reductive complex algebraic group, $B$ a Borel subgroup and $T\subseteq B$ a maximal torus. Let $w\in N_G(T)/T$ and $X_w$ be the Schubert variety for $w$. Whenever it is smooth, $X_w$ is a GKM subvariety of $G/B$ because it inherits $T$-stability from the Bruhat decomposition.
    When $X_w$ is singular, there exists a desingularization $Z_{w}\rightarrow X_w$ devised by Bott and Samelson \cite{MR105694} where $Z_w$ is a tower of projective bundles.
    
    If $\underline{w}=(w_1,\dots,w_l)$ is a reduced word for $w$ in the Weyl group, where each $w_i$ corresponds to a simple root $\alpha_i$, the corresponding Bott--Samelson variety is
    \[
    Z_{\underline{w}} := P_1\times_B P_2 \times_B\cdots\times_B P_l/B
    \]
    where $P_1,\dots,P_l$ are the minimal parabolic subgroups corresponding to $\alpha_1,\dots,\alpha_l$ and $B$ acts by right multiplication on the rightmost entry.
    Certain Bott--Samelson varieties are GKM \cite{Withrow_2018}.
\end{example}

\begin{example}
    Let $G,B$ and $P$ as in the previous examples, with Lie algebras $\mathfrak{g},\mathfrak{b}$ and $\mathfrak{p}$, respectively. Let also $B\subseteq P$. Let us consider $\mathfrak{g}$ as a $P$-module with respect to the adjoint action. Let $H$ be a $P$-submodule of $\mathfrak{g}$ containing $\mathfrak{p}$, and let $x\in\mathfrak{g}$. The $\mathfrak{p}$-Hessenberg variety of $x$ and $H$ is
    $$\mathrm{Hess}_{\Theta}(x,H):=\left\{gP\in G/P : \mathrm{Ad}(g^{-1})(x)\in H \right\},$$
    where $\Theta$ denotes a subset of the simple roots of $G$ corresponding to $P$. When $\mathfrak{p}=\mathfrak{b}$, those varieties are known simply as Hessenberg varieties \cite{MR1043857}. Certain $\mathfrak{p}$-Hessenberg varieties are GKM, see \cite{Goldin_Tymoczko_2023} and \cite[Section~4]{partialhessenberg}. 
    
    An alternative, less abstract formulation in the $A_n$ case is the following. Let us consider a non-decreasing function $h\colon \{1,2,\ldots, n\}\rightarrow \{1,2,\ldots, n\}$ such that $h(i)\ge i$ for $1\le i\le n$.
    Given a linear operator $X$ on $\mathbb{C}^n$, the Hessenberg variety of $X$ and $h$ is the variety of all flags:
    $$ 0 \subsetneq V_1\subsetneq V_2\subsetneq \cdots\subsetneq V_n=\mathbb{C}^n, $$
    such that $X(V_i)\subseteq V_{h(i)}$ for all $i$.
\end{example}

Many additional intriguing examples arise in the symplectic setting. For instance, a compact symplectic manifold with a Hamiltonian GKM torus action that does not admit a compatible K{\"a}hler structure (see Section~\ref{sec:twisted_flag}). In particular, this manifold is not the analytification of any algebraic GKM variety.

\subsection{The moment map}
\label{sec:moment_map}

We briefly recollect some nice properties of the moment map, which is always present in our setting. When $X$ is an algebraic GKM space, we consider the moment map of its analytification, as explained in Remark~\ref{rem:analytification}.

For Hamiltonian actions by $T=(S^1)^{\times r}$ on a compact symplectic manifold $X$, the moment map is unique up to an additive constant and automatically $T$-invariant \cite[Proposition~2.9]{Guillemin_Guillemin_Ohsawa_Ginzburg_Karshon_2014}.
By the Atiyah--Giullemin--Sternberg convexity theorem, $\mu(X)\subset\mathfrak{t}^*$ is the convex hull of $\mu(X^T)$, the images of the fixed points \cite[\S2.2]{Guillemin_Guillemin_Ohsawa_Ginzburg_Karshon_2014}.
Moreover, the $1$-simplices of the boundary of $\mu(X)$ are images of the $T$-invariant $2$-spheres.
Conversely, each $T$-invariant $2$-sphere gets mapped by $\mu$ to a line segment whose slope is the weight of the $T$-action on the $2$-sphere
(cf. discussion of \emph{local normal form} in \cite[\S2.2]{Guillemin_Guillemin_Ohsawa_Ginzburg_Karshon_2014} and \cite[after Remark~2.11]{GKZ20}).

\subsection{GKM graphs}

The combinatorial device which encodes a lot of a GKM space's geometry and topology is its \emph{GKM graph}.

\begin{defn}[Underlying graph]\label{defn:under_g}
Let $X$ be an algebraic or Hamiltonian GKM space. Assign to it the undirected graph $G$ constructed as follows.
\begin{itemize}
    \item The vertices of $G$ correspond to $T$-stable points,
    \item the flags correspond to pairs $(L, v)$ where $v$ is a $T$-stable point, and $L$ is a $T$-invariant irreducible linear summand of $T_{X,v}$,
    \item the edges correspond to pairs $(L_1,v_1), (L_2,v_2)$ of distinct flags for which there exists a $T$-stable $\mathbb{P}^1$ (or $S^2$) in $X$ that contains $v_1$ and $v_2$ and to which $L_1$ and $L_2$ are tangent.
\end{itemize}
\end{defn}
We denote by $V(G),E(G)$ and $F(G)$, respectively, the set of vertices, edges and flags of $G$. We will usually denote an edge by $e$, and by $e=\{v_1,v_2\}$ we mean that $v_1$ and $v_2$ are the two vertices defining $e$.
We will write $C_e$ for the $T$-stable $\mathbb{P}^1$ in $X$ corresponding to $e$.
By $E(G)^\pm$ we denote the set of all flags defined by an edge. Such flags may be denoted as pairs $(e,v_1),(e,v_2)$. We denote by $\overline{(e,v_1)}$ the other flag defined by $e$. Finally, we denote by $F(G)_v$ (resp., $E(G)_v$) the set of all flags (resp., edges) incident to a vertex $v\in V(G)$.
\begin{example}
    The varieties $\mathbb{P}^1$ and $\mathbb{A}^1$ with the standard $\mathbb{C}^\times$-actions given by $\lambda\cdot [x:y]=[x:\lambda y]$ and $\lambda\cdot z=\lambda z$,
    have, respectively, two and one fixed points. Thus, the underlying graphs for their GKM graphs are the following:
    \begin{figure}[htb]
        \centering
        \includegraphics[width=0.6\linewidth]{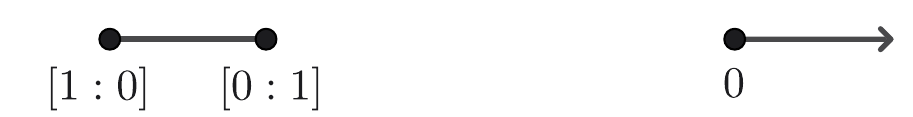}
        \caption{
            The underlying graphs of the GKM graphs of $\mathbb{P}^1$ (left) and $\mathbb{A}^1$ (right).
        }
    \end{figure}
    
    Note that in the first case we have two flags, defining an edge. In the second case we have just one flag, represented by an arrow starting from the vertex representing the fixed point.
\end{example}

\begin{rem}
    Let $X$ be an algebraic GKM space. If it is projective, then all flags are in $E(G)^\pm$. 
    Indeed, every flag is tangent to a $T$-stable $\mathbb{P}^1$ \cite[Proposition~2.3]{GKZ22} which has exactly two fixed points.
    It defines two flags, and one edge. On the other hand, if $X$ is not projective, then we may have flags not in $E(G)^\pm$, corresponding to $T$-invariant curves whose closure in $X$ is an affine line. An example of such $X$ is the total space of a vector bundle over a GKM space. When $X$ is Hamiltonian, we replace projectivity with compactness.

\end{rem}


\begin{rem}[$G$ is simple]
    The graph $G$ is simple, i.e. it has no multiple edges or loops, see \cite[Remark~2.7]{Charton_Kessler_2025}. This can be easily shown using the moment map (cf. Section~\ref{sec:moment_map}), which sends each $T$-stable $\mathbb{P}^1$ to a non-degenerate line segment in $\mathfrak{t}^*$ whose endpoints are the images of the fixed points and whose slope is a multiple of the $T$-weight of the $\mathbb{P}^1$.
\end{rem}
\begin{rem}
    If $\dim_\mathbb{R} X = 2n$ then $G$ is $n$-valent, 
    that is at each vertex the number of edges and number of flags sum up to $n$.
\end{rem}

\begin{defn}[Axial function]\label{defn:axial}
    Let $X$ be an algebraic or Hamiltonian GKM space and $G$ the graph in Definition~\ref{defn:under_g}. The \emph{axial function} of $X$,
    \[
    \alpha\colon F(G)\longrightarrow \mathfrak{t}^*,
    \]
    assigns to each flag $(L,v)$ the $T$-weight of $L$.
\end{defn}


It can easily be seen that if $e=\{v_1,v_2\}$, then $\alpha(e,v_1)=-\alpha(e,v_2)$. 
By construction, $\alpha$ takes values in the character lattice $M\subset \mathfrak{t}^*$ which is isomorphic to $\mathbb{Z}^r$ where $r=\text{rank}(T)$.
\begin{defn}[GKM graph]
    The \emph{GKM graph} of a GKM space $X$ consists of the graph $G$ constructed above together with the axial function $\alpha$.
\end{defn}

\begin{rem}[$G$ is connected]\label{rem:G_connected}
    Note that $G$ is connected because $X$ is connected, see \cite[Remark~2.9]{Charton_Kessler_2025}.
    This can be shown either by considering the Chang--Skjelbred Lemma or by using the moment map to connect every vertex to a vertex $p_0$ locally minimizing $\langle\mu(p_0), \xi\rangle$ for some fixed generic $\xi\in\mathfrak{t}$.
    The number of such $p_0$ is the Betti number $b_0(X)=1$ by \cite[\S1.3]{GZ01}.
    Hence, there is a unique such $p_0$ and every vertex is connected via a path to $p_0$.
\end{rem}

The GKM graph constructed from a GKM space is therefore an abstract GKM graph in the following sense.

\begin{defn}[Abstract GKM graph]
    An \emph{abstract GKM graph} of dimension $n$ and torus rank $r$ is a connected unoriented $n$-valent simple graph with flags $G$ with an axial function $\alpha\colon F(G)\rightarrow \mathbb{Z}^r$ satisfying
    \begin{enumerate}
        \item $\alpha(e,v_1)=-\alpha(e,v_2)$ for all $e=\{v_1,v_2\}\in E(G)$ and
        \item for all $(L,v),(L',v)\in F(G)$, the weights $\alpha(L,v)$ and $\alpha(L',v)$ are linearly independent.
    \end{enumerate}
\end{defn}

\begin{example}[Illustration of GKM graphs]
\label{ex:GKM_illustration}
    We will use two ways of illustrating GKM graphs, as explained using the example in Figure~\ref{fig:GKM_drawings}.

    \begin{enumerate}
        \item On the left hand side of Figure~\ref{fig:GKM_drawings}, some oriented edges are labelled with their value of $\alpha$.
        To determine $\alpha$ for all flags, we impose the rule that, unless stated otherwise, parallel flags with the same direction have the same value of $\alpha$, and that $\alpha(e,v_1)=-\alpha(e,v_2)$ for any edge $e=\{v_1,v_2\}$.

        \item On the right hand side of Figure~\ref{fig:GKM_drawings}, we use an illustration method that works when $\mathrm{rank}(T)=2$.
        Unless stated otherwise, for any flag $f\in F(G)$, the value $\alpha(f)$ is given by the primitive integral vector pointing in the direction of $f$.
    \end{enumerate}

    \begin{figure}[htb]
        \centering
        \includegraphics[width=0.4\linewidth]{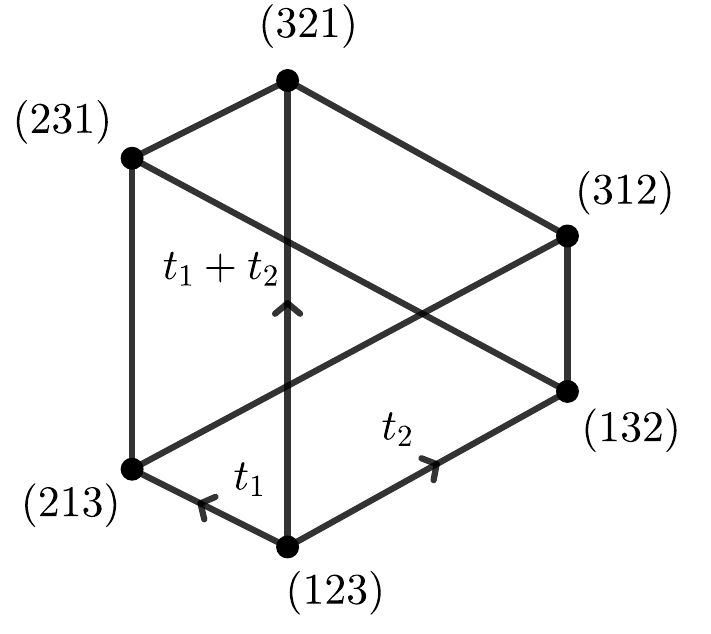}
        \hspace{1cm}
        \includegraphics[width=0.4\linewidth]{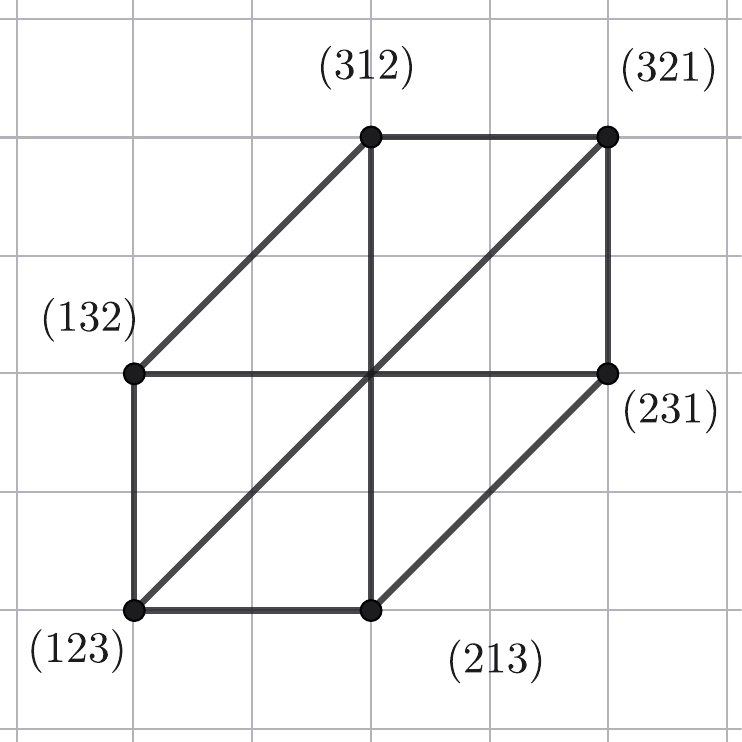}
        \caption{
            Two ways of depicting the GKM graph of the full flag variety for $\mathbb{C}^3$ acted on by  $T=(\mathbb{C}^\times)^2$.
            It has six vertices, labeled by the elements of $S_3$.
            See also \cite[Fig.~3~(left)]{Guillemin_Holm_Zara_2006}.
        }
        \label{fig:GKM_drawings}
    \end{figure}
\end{example}

\subsection{Equivariant cohomology}

We briefly recall how to express equivariant cohomology of GKM spaces and how to perform equivariant integration using the GKM graph.
We refer to \cite{Hsiang_1975, Ati82} and \cite{MR4655919} for background on equivariant cohomology and equivariant integration.

We write $H_T^*:=H_T^*(\{\mathrm{pt}\})\cong \mathbb{Z}[t_1,\dots,t_r]\cong\mathbb{Z}[\mathfrak{t}]$, where $r=\mathrm{rank}(T)$ and $t_1,\dots,t_r$ are the \emph{equivariant parameters}.
Using the above isomorphism, we may think of $t_1,\dots,t_r$ as elements of $\mathfrak{t}^*$.
We have the \emph{restriction map} $H_T^*(X)\rightarrow H^*(X)$ and the \emph{integration maps} $\int_X^T\colon H_T^*(X)\rightarrow H_T^{*}$ and $\int\colon H^*(X)\rightarrow H^*(\{pt\})$ of degree $-\dim_{\mathbb{R}}X$ that are compatible with restriction.
The map $H_T^*\rightarrow H^*(\{pt\})$ is given by setting the equivariant parameters to zero.
For any $x\in X^T$ we have a $T$-equivariant map $\{x\}\rightarrow X$ and hence a \emph{localization map} $H_T^*(X)\rightarrow H_T^*$.
For $\gamma\in H_T^*(X)$, we write $\gamma|_x\in H_T^*$ for the image of $\gamma$ under this localization map.

The following \emph{GKM theorem} originates from \cite[Theorem~1.2.2]{GKM98}, see also \cite[Theorem~1.7.3]{GZ01} and references therein.

\begin{thm}
\label{thm:GKM}
    Let $X$ be a GKM space.
    Then the localization map
    \[
    H_T^*(X;\mathbb{Q})\longrightarrow\bigoplus_{v\in V(G)} \mathbb{Q}[\mathfrak{t}]
    \]
    is injective.
    Its image is given by all tuples $(f_v)_{v\in V(G)}$ such that for all edges $e=\{p,q\}\in E(G)$, we have $\alpha(e,p)\mid f_{p}-f_{q}$.
\end{thm}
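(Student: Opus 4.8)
The plan is to reduce the statement to two classical facts: the \emph{equivariant formality} of a GKM space, already recorded above (\cite[Proposition~5.8]{MR766741} in the Hamiltonian case, \cite[(5.2.2)~Corollary]{Weber_2005} in the algebraic case, the latter reduced to the former via the analytification of Remark~\ref{rem:analytification}), and the \emph{Chang--Skjelbred lemma}. One works with rational coefficients and, in the algebraic case, with the analytification, so that $H_T^*$ is topological equivariant cohomology and $X^T$ is the finite set $V(G)$; in particular $H_T^*(X^T;\mathbb{Q})=\bigoplus_{v\in V(G)}\mathbb{Q}[\mathfrak{t}]$, which is the target of the localization map.

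\textbf{Injectivity.} Equivariant formality means that $H_T^*(X;\mathbb{Q})$ is a free module over $H_T^*\cong\mathbb{Q}[\mathfrak{t}]$, hence torsion-free. By the Atiyah--Bott--Quillen localization theorem, the restriction $\iota^*\colon H_T^*(X;\mathbb{Q})\to H_T^*(X^T;\mathbb{Q})$ becomes an isomorphism after inverting the finitely many characters occurring as $T$-weights on the tangent spaces $T_vX$, $v\in X^T$. Thus $\ker\iota^*$ is simultaneously torsion-free and annihilated by a nonzero element of the domain $\mathbb{Q}[\mathfrak{t}]$, so $\ker\iota^*=0$.

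\textbf{Necessity of the edge conditions, via a $\mathbb{P}^1$ computation.} Let $\gamma\in H_T^*(X;\mathbb{Q})$ with $\gamma|_v=f_v$, and fix an edge $e=\{p,q\}$ with its $T$-stable $\mathbb{P}^1$, $C_e$. The torus acts on $C_e\cong\mathbb{P}^1$ through a rank-one quotient with tangent weights $\alpha(e,p)=-\alpha(e,q)$ at the two fixed points, so by the equivariant projective-bundle formula $H_T^*(C_e;\mathbb{Q})$ is free of rank two over $\mathbb{Q}[\mathfrak{t}]$, spanned by $1$, which restricts to $(1,1)$, and by $c_1^T(\mathcal{O}(1))$, whose two restrictions differ by $\pm\alpha(e,p)$. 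Hence the image of $H_T^*(C_e;\mathbb{Q})$ in $\mathbb{Q}[\mathfrak{t}]^{\oplus 2}$ equals $\{(a,b):\alpha(e,p)\mid a-b\}$, and restricting $\gamma$ to $C_e$ gives $\alpha(e,p)\mid f_p-f_q$. I record also that this restriction map is injective, which is reused below.

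\textbf{Surjectivity onto the described subring, and the main obstacle.} For the converse I would invoke the Chang--Skjelbred lemma: since $X$ is equivariantly formal, the image of $\iota^*$ coincides with the image of $H_T^*(X_1;\mathbb{Q})\to H_T^*(X^T;\mathbb{Q})$, where $X_1=\bigcup_{\operatorname{codim}T'=1}X^{T'}$ is the one-skeleton. By the structure recalled after Definition~\ref{def:GKM_alg}, $X_1$ is the union of the $T$-stable $\mathbb{P}^1$'s $C_e$, meeting one another only at points of $X^T$. A Mayer--Vietoris induction over the edges of $G$ — using that $G$ is simple and connected and the explicit description of $\operatorname{im}(H_T^*(C_e;\mathbb{Q})\to H_T^*(\{p,q\};\mathbb{Q}))$ from the previous step — identifies $\operatorname{im}(H_T^*(X_1;\mathbb{Q}))$ with exactly the tuples $(f_v)$ satisfying $\alpha(e,p)\mid f_p-f_q$ for all $e=\{p,q\}\in E(G)$. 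Combined with Chang--Skjelbred, this is the asserted image. (In the Hamiltonian setting one can instead argue directly: take a generic $\xi\in\mathfrak{t}$, use the Morse function $\langle\mu,\xi\rangle$ — all of whose critical points lie in $X^T$ with even index — and build the classes inductively along the sublevel-set filtration by the Atiyah--Bott/Kirwan flow argument.) The technical heart, in either route, is the Chang--Skjelbred lemma, whose proof rests on the Atiyah--Bott localization exact sequence of the pair $(X,X_1)$ together with the fact that $H_T^*(X,X_1;\mathbb{Q})$ is supported in codimension $\ge 2$ in $\operatorname{Spec}\mathbb{Q}[\mathfrak{t}]$; it is precisely there that equivariant formality and the passage to $\mathbb{Q}$-coefficients are used in an essential way. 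Everything else is routine.
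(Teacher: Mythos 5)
The paper does not actually prove Theorem~\ref{thm:GKM}; it records it as a classical fact with references to \cite{GKM98} and \cite{GZ01}. Your sketch is, in substance, the standard proof found in those references, and it is correct at the level of detail one would expect: injectivity from equivariant formality (freeness, hence torsion-freeness, of $H_T^*(X;\mathbb{Q})$ over $\mathbb{Q}[\mathfrak{t}]$) combined with the localization theorem; necessity of the divisibility conditions from the rank-two computation of $H_T^*(C_e;\mathbb{Q})$ and the fact that the two restrictions of $c_1^T(\mathcal{O}(1))$ differ by $\pm\alpha(e,p)$; and sufficiency from the Chang--Skjelbred lemma together with a Mayer--Vietoris computation of the image of $H_T^*(X_1;\mathbb{Q})$ in $H_T^*(X^T;\mathbb{Q})$ (the induction closes because restriction from each $C_e$ to a fixed point is surjective, so the relevant Mayer--Vietoris sequences split into short exact sequences). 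Two small points deserve care. First, the paper's algebraic GKM spaces are only quasi-projective, hence possibly non-compact, which is why the paper cites \cite{Weber_2005} for equivariant formality in that case rather than reducing to the compact Hamiltonian setting via Remark~\ref{rem:analytification} as you suggest; the localization theorem and Chang--Skjelbred lemma do hold in this generality, but one should not invoke versions stated only for compact manifolds. Second, in the non-compact case the one-skeleton $X_1$ may also contain $T$-invariant affine lines (flags not in $E(G)^\pm$); these have a single fixed point and impose no divisibility condition, so your description of the image is unaffected, but the sentence ``$X_1$ is the union of the $C_e$'' should be adjusted accordingly. With those caveats, the argument is sound and matches the intended provenance of the theorem.
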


The following localization formula for equivariant integration is due to Atiyah--Bott \cite[Equation~(3.8)]{Atiyah_Bott_1984} and Berline--Vergne \cite{Berline_Vergne_1983}.
We state it for GKM spaces although it is of course much more general.
\begin{thm}\label{thm:ABBV}
    Let $X$ be either a projective algebraic or a compact Hamiltonian GKM space, and let $G$ be its GKM graph.
    Let $f\in H_T^*(X)$ correspond to $(f_v)_{v\in V(G)}$ under the identification of Theorem~\ref{thm:GKM}.
    Then
    \[
    \int_X^T f = \sum_{v\in V(G)}\frac{f_v}{\prod_{\epsilon\in F(G)_v}\alpha(\epsilon)}.
    \]

\end{thm}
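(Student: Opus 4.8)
The plan is to obtain this as a specialization of the general Atiyah--Bott--Berline--Vergne localization theorem for compact manifolds \cite{Atiyah_Bott_1984,Berline_Vergne_1983}, the GKM hypotheses serving only to make the fixed locus and the normal-bundle data as simple as possible. In the algebraic case $X$ is projective, so its analytification is a compact manifold carrying the restricted action of the compact torus $(S^1)^{\times r}$, and equivariant integration is computed there (Remark~\ref{rem:analytification}); thus in both cases we may work with a compact manifold carrying a smooth action of a compact torus $T$ with $0<|X^T|<\infty$ and with rational coefficients throughout.

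First I would invoke the localization theorem in the following form: after passing to the fraction field of $H_T^*$ (equivalently, inverting all nonzero homogeneous elements), every $f\in H_T^*(X;\mathbb{Q})$ satisfies
\[
\int_X^T f \;=\; \sum_{p\in X^T}\frac{f|_p}{e_T(T_pX)},
\]
where $f|_p$ is the localization of $f$ at $p$ and $e_T(T_pX)$ is the equivariant Euler class of the tangent space regarded as a $T$-representation. Here I use that every connected component of $X^T$ is a single vertex of $G$, so that the fibre integral over a component is just evaluation and the equivariant normal bundle of $\{p\}$ is all of $T_pX$.

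Next I would translate the two ingredients into graph data. By the defining GKM condition on tangent weights, $T_pX$ decomposes as a direct sum of its $T$-invariant irreducible (hence one-dimensional) linear summands $L$, whose weights are pairwise linearly independent, in particular distinct and nonzero; by Definition~\ref{defn:axial} these weights are exactly the values $\alpha(\epsilon)$ of the axial function on the flags $\epsilon\in F(G)_p$ at $p$. Hence $e_T(T_pX)=\prod_{\epsilon\in F(G)_p}\alpha(\epsilon)$, which is invertible in the fraction field. Moreover the identification of Theorem~\ref{thm:GKM} is by construction the tuple of localization maps $f\mapsto(f|_v)_{v\in V(G)}$, so $f|_p=f_p$. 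Substituting both into the displayed formula yields
\[
\int_X^T f \;=\; \sum_{v\in V(G)}\frac{f_v}{\prod_{\epsilon\in F(G)_v}\alpha(\epsilon)},
\]
and since the left-hand side lies in $H_T^*$, so does the right-hand side a posteriori, even though it is presented as a sum of fractions.

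I do not anticipate a genuine obstacle here: the mathematical content is entirely contained in the classical localization theorem, and the GKM axioms are arranged precisely so that the Euler-class and restriction terms transcribe verbatim into axial-function data. The only points requiring (light) care are the reduction of the algebraic case to the statement for compact manifolds via analytification, and the bookkeeping that the identity initially holds only after inverting the equivariant parameters.
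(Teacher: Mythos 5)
Your proposal is correct and matches the paper's treatment: the paper does not prove this theorem but simply cites Atiyah--Bott \cite{Atiyah_Bott_1984} and Berline--Vergne \cite{Berline_Vergne_1983} and notes that the statement is the specialization of their localization formula to the GKM setting, which is exactly the reduction you carry out (isolated fixed points, equivariant Euler class equal to the product of the tangent weights, i.e.\ the axial function values on the flags at each vertex). Your added care about analytification in the projective algebraic case and about working a priori in the fraction field of $H_T^*$ is sound and consistent with the paper's conventions.
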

\begin{rem}
    The right hand side of the localization formula above lives purely in $H_T^*\cong \mathbb{Z}[t_1,\dots,t_r]$.
    It therefore gives rise to rational functions in $t_1,\dots,t_r$ that simplify non-trivially to a polynomial.
    Geometrically, this is clear because $\int_X^T$ takes values in $H_T^*$.
    However, the same polynomiality result still holds if the right hand side is calculated for an abstract GKM graph $G$ and $(f_v)$ as in Theorem \ref{thm:GKM}, without assuming the existence of a realizing space $X$, see \cite[Theorem~2.2]{Guillemin_Zara_2002}.
\end{rem}

\begin{rem}[First Chern class]\label{rem:c1}
    Let $X$ be a GKM space with GKM graph $G$.
    The equivariant first Chern class $c_1^T(T_X)\in H_T^2(X)$ is determined by $G$ via
    \[
    c_1^T(T_X)|_v = \sum_{f\in F(G)_v} \alpha(f)\hspace{7mm}\forall v\in V(G).
    \]
    Hence, using Theorem~\ref{thm:GKM}, the expression $c_1^T(T_X)$ is well-defined in terms of the abstract GKM graph $G$ even in the absence of a realizing GKM space $X$. The same is true for $c_1(T_X)$ by applying the forgetful map.
\end{rem}

\subsection{Connections}\label{sec:connections}

Given a GKM space $X$ and an edge $e=\{p,q\}$, the Birkhoff--Grothendieck theorem \cite[Th\'eor\`eme~2.1]{MR87176} gives a splitting
\[
T_X|_{C_e}\cong \mathcal{O}_{\mathbb{P}^1}(a_1)\oplus\dots\oplus\mathcal{O}_{\mathbb{P}^1}(a_n).
\]
Restricting a summand to the two fixed points $p,q\in C_e$ determines a (complex) one-dimensional $T$-stable subspace of $T_{C_e,p}$ and $T_{C_e,q}$, respectively.
By pairwise linear independence of the weights, those subspaces correspond precisely to flags of $G$ at $p$ and $q$, respectively.
Thus, one obtains a bijective map
\[
\nabla_{(e,p)}\colon F(G)_p\longrightarrow F(G)_q.
\]
The collection $\{\nabla_{(e,p)} : {(e,p)}\in E(G)^\pm\}$ forms a compatible connection in the following sense.

\begin{defn}
\label{def:connection}
    A \emph{compatible connection} on an abstract GKM graph $G$ with axial function $\alpha$ is a collection of bijective maps $\nabla_{(e,p)}\colon F(G)_p\rightarrow F(G)_q$ for all $e=\{p,q\}\in E(G)$ satisfying
    \begin{enumerate}
        \item $\nabla_{(e,q)}=\nabla_{(e,p)}^{-1}$ for all $e=\{p,q\}\in E(G)$, and
        \item for every $(e',p)\in E(G)_p$ there exists $a\in \mathbb{Z}$ such that
        \[
        \alpha(\nabla_{(e,p)} (e',p)) = \alpha(e',p) - a \alpha(e,p).
        \]
        \label{def:connection:as}
    \end{enumerate}
\end{defn}
\begin{defn}[Induced connection]\label{def:induced_connection}
    We will refer to the compatible connection constructed above using the Birkhoff--Grothendieck theorem as the \emph{compatible connection on $G$ induced by $X$}.
\end{defn}

\begin{defn}[$k$-independence]
    An abstract GKM graph $(G,\alpha)$ is called \emph{$k$-independent} if for each $p\in V(G)$ and any distinct $\epsilon_1,\dots,\epsilon_k\in F(G)_p$, the weights $\alpha(\epsilon_1),\dots,\alpha(\epsilon_k)$ are linearly independent.
\end{defn}
\begin{rem}\label{rem:computing_conn}
    It follows that every GKM graph is $2$-independent by definition.
    If a GKM graph is $3$-independent, one verifies easily that a compatible connection is unique if it exists.
    On the other hand, when 3-independence fails, there can be many compatible connections. Note that the Birkhoff--Grothendieck theorem only implies that $X$ induces a compatible connection on its GKM graph. To the best of the authors' knowledge, there is no general way to determine the induced connection algorithmically.
\end{rem}

\begin{example}[Non-uniqueness of compatible connections]
    Consider the GKM graph $G$ of the full flag manifold of $\mathbb{C}^3$ (see Figure~\ref{fig:GKM_drawings}).
    It is $2$-independent but not $3$-independent because the acting torus has rank $2$.
    As abstract GKM graph, $G$ admits multiple compatible connections.
    For example, let $e$ be the edge $\{(123), (213)\}$.
    Then there are two bijections $\nabla_{(e,(123))}\colon F(G)_{(123)}\rightarrow F(G)_{(213)}$ and one easily checks that both satisfy Definition~\ref{def:connection}~(\ref{def:connection:as}).
    As Definition~\ref{def:connection} does not impose any compatibility conditions between $\nabla_{(e_1,p_1)}$ and $\nabla_{(e_2,p_1)}$ for $e_1\neq e_2$, we can set $\nabla_{\overline{(e,(123))}}:=\nabla_{(e,(123))}^{-1}$ and define $\nabla_{(e',p)}$ to coincide with the connection on $G$  induced by the flag manifold realization (Definition~\ref{def:induced_connection}) for any $e'\neq e$.
    Thus, we obtain two compatible connections on $G$ that differ precisely at $e$.
\end{example}

\begin{rem}
    Some authors include in the definition of an abstract GKM graph that it should admit some compatible connection.
    Whenever this is assumed, it is important to stress for each combinatorial construction whether it depends on the particular choice of connection.
    In particular, the localization formula for Gromov--Witten invariants of GKM spaces is originally formulated using a specific choice of compatible connection, namely the one induced by the GKM space (see Section~\ref{sec:localization_formula}).
\end{rem}

The next definition follows \cite[Definition~4.1]{Charton_Kessler_2025}.
It is an easy exercise to show that it is well-defined.

\begin{defn}\label{def:C1}
    Let $G$ be an abstract GKM graph that admits a compatible connection.
    The \emph{Chern number} map $\mathcal{C}_1\colon E(G)\rightarrow\mathbb{Z}$ is given by
    \[
        e=\{v_1,v_2\} \longmapsto  \frac{1}{\alpha(e,v_1)}\left(\sum_{f_1\in F(G)_{v_1}}\alpha(f_1) - \sum_{f_2\in F(G)_{v_2}}\alpha(f_2)\right).
    \]
\end{defn}
\begin{rem}
    $\mathcal{C}_1$ is well-defined by Definition~\ref{def:connection}~\eqref{def:connection:as}.
    If $G$ is realized by a GKM space $X$ then $\mathcal{C}_1(e)=\int_{C_e}c_1(T_X)$ by Theorem~\ref{thm:ABBV} and Remark~\ref{rem:c1}.
    Moreover, given any compatible connection for $G$, we have
    $\mathcal{C}_1(e)=a_1+\cdots+a_n$ where $a_1,\dots,a_n$ are the integers associated to $e$ via Definition \ref{def:connection} \eqref{def:connection:as} and $n$ is the valency of $G$.
\end{rem}

\subsection{Curve classes}

Another problem in calculating Gromov--Witten invariants purely in terms of the GKM graph using the localization formula is that one needs to understand the relations between the classes $[C_e]\in H_2(X)$ for all $e\in E(G)$.

The following lemma is well-known to experts.
For Hamiltonian GKM spaces, a proof using Morse theory may be found in \cite[Theorem~4.5(i)]{Li_2017}.
For algebraic GKM spaces, a similar proof works using the Bia\l{}ynicki--Birula decompostion \cite[Lemma~4.1 \& Theorem~4.4]{Car02}, or one can apply the Hamiltonian proof to the analytification (see Remark~\ref{rem:analytification}).

\begin{lem}
\label{lem:torsion_free}
    Let $X$ be an algebraic or Hamiltonian GKM space.
    Then $H_*(X;\mathbb{Z})$ is torsion-free.
\end{lem}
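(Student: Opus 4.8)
The plan is to exhibit, in both settings, a decomposition of $X$ into locally closed pieces all of \emph{even} real dimension, and then to invoke the standard fact that a space built by successively attaching only even-dimensional cells has free integral homology: in the associated cellular (equivalently Morse) chain complex every differential shifts the degree by $1$ and so vanishes for parity reasons, whence $H_*(X;\mathbb{Z})$ is free abelian, concentrated in even degrees, with one generator per cell. So the real content is the construction of such a decomposition.

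In the Hamiltonian case I would fix a generic $\xi\in\mathfrak{t}$ --- one for which $\langle\alpha(f),\xi\rangle\neq0$ for every flag $f\in F(G)$ and $\exp(\mathbb{R}\xi)$ is dense in $T$ --- and consider $f_\xi:=\langle\mu,\xi\rangle\colon X\to\mathbb{R}$. Its critical set is the zero locus of the induced vector field $X_\xi$, which equals $X^{\overline{\exp(\mathbb{R}\xi)}}=X^T$, a finite set. Near a fixed point $p$ the $T$-action on $T_pX$ is linear with weights $\{\alpha(f):f\in F(G)_p\}$, and the local normal form for the moment map shows that $f_\xi$ is nondegenerate at $p$, its Hessian splitting $T_pX$ into the $2$-planes attached to these weights, each positive- or negative-definite according to the sign of $\langle\alpha(f),\xi\rangle$; in particular every Morse index is even,
\[
\mathrm{ind}_p(f_\xi)=2\cdot\#\{f\in F(G)_p:\langle\alpha(f),\xi\rangle<0\}.
\]
Morse theory for $f_\xi$ then presents $X$ as built from even-dimensional cells, which gives the claim; this is essentially \cite[Theorem~4.5(i)]{Li_2017}.

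In the algebraic case I would either transport the above to the analytification $X^{\mathrm{an}}$ endowed with a maximal compact subtorus (Remark~\ref{rem:analytification}), or argue directly via the Bia\l{}ynicki--Birula decomposition: choose a one-parameter subgroup $\lambda\colon\mathbb{C}^\times\to T$ generic enough that $X^\lambda=X^T$, write $X=\bigsqcup_{p\in X^T}X_p^+$ for the attracting sets, observe that the isolated fixed points force $X_p^+\cong\mathbb{A}^{n_p}$, and conclude as before that $H_*(X;\mathbb{Z})$ is free with basis the classes $[\overline{X_p^+}]$; compare \cite[Lemma~4.1 \& Theorem~4.4]{Car02}.

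The step I expect to be the main obstacle is the behaviour at infinity when $X$ is noncompact, respectively quasi-projective. On the symplectic side one must arrange that $f_\xi$ is proper and bounded below, so that its sublevel sets are compact and the inductive attachment of cells terminates; on the algebraic side one must know that the Bia\l{}ynicki--Birula stratification is \emph{filterable}, i.e.\ that the fixed points can be ordered so that every union of an initial family of attracting cells is Zariski closed --- precisely what licenses reading $H_*(X;\mathbb{Z})$ off the cell closures. Both properties are automatic in the compact, respectively projective, case, and in general follow by comparison with a suitable completion, as in the references cited above; executing that comparison cleanly is the only genuine work beyond the parity argument.
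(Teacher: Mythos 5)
Your proposal is correct and follows essentially the same route the paper takes: the paper does not write out a proof but points to Morse theory for the moment map component $\langle\mu,\xi\rangle$ (citing Li, Theorem~4.5(i)) in the Hamiltonian case and to the Bia\l{}ynicki--Birula decomposition (citing Carrell) or analytification in the algebraic case, which is exactly the even-cell decomposition argument you spell out. Your closing remarks on properness of $f_\xi$ and filterability of the BB stratification correctly identify the only nontrivial points in the noncompact setting, and these are handled in the cited references just as you indicate.
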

%
%

The next Proposition gives us an explicit way of computing a complete set of relations in $H_2(X)$ among $\{[C_e]:e\in E(G)\}$.

\begin{defn}
    Let $G$ be a graph. A \emph{closed loop} is a finite sequence of flags $\{(e_i,p_i)\}_{i=1}^m \subset E(G)^\pm$ such that
    \begin{itemize}
        \item $e_i=\{p_i,q_i\}$ for $1\le i \le m$,
        \item $q_i=p_{i+1}$ for $1\le i \le m-1$,
        \item $q_m=p_1$.
    \end{itemize}
\end{defn}

\begin{prop}
\label{prop:curve_classes}
    Let $X$ be either a projective algebraic or a compact Hamiltonian GKM space, and let $G$ be its GKM graph. 
    Let $\{1_e\}_{e\in E(G)}$ be the standard basis of the $\mathbb{Z}$-module $\mathbb{Z}^{E(G)}$.
    For each closed loop $\gamma=\{\epsilon_i\}_{i=1}^m$ in $G$, write  $\epsilon_i=(e_i,p_i)$ and define the linear map
    \begin{align*}
        R_\gamma\colon \mathfrak{t}&\longrightarrow \mathbb{C}^{E(G)}
        \\
        \xi &\longmapsto \sum_{i=1}^m \langle \alpha(\epsilon_i),\xi\rangle 1_{e_i}.
    \end{align*}
    Then the kernel of the $\mathbb{Z}$-module homomorphism
    \begin{align*}
        Q\colon \mathbb{Z}^{E(G)}&\longrightarrow H_2(X;\mathbb{Z})\\
        1_e&\longmapsto [C_e]
    \end{align*}
    is $K\cap \mathbb{Z}^{E(G)}$, where 
    \[
    K:=\mathrm{span}_\mathbb{C}\left\langle \mathrm{Im}(R_\gamma) : \gamma \text{ is a closed loop in $G$}\right\rangle\le \mathbb{C}^{E(G)}.
    \]
    Moreover, to generate $K$ it suffices to let $\gamma$ range over any set of cycles generating $H_1(|G|;\mathbb{Z})$, where $|G|$ is the geometric realization of $G$.


\end{prop}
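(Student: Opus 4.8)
The plan is to identify $H_2(X;\mathbb{Z})$ with a concrete quotient using equivariant cohomology, and then compute the kernel of $Q$ directly. First, since $H_*(X;\mathbb{Z})$ is torsion-free by Lemma~\ref{lem:torsion_free}, the universal coefficient theorem gives $H_2(X;\mathbb{Z})\cong \mathrm{Hom}(H^2(X;\mathbb{Z}),\mathbb{Z})$, and dually $H^2(X;\mathbb{Z})\cong \mathrm{Hom}(H_2(X;\mathbb{Z}),\mathbb{Z})$. By equivariant formality of $X$ (noted in the excerpt via \cite{MR766741}, \cite{Weber_2005}), the restriction map $H_T^2(X)\to H^2(X)$ is surjective with kernel generated by $H_T^2\cdot 1 = \mathfrak{t}^*$. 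So via Theorem~\ref{thm:GKM} I can describe $H^2(X;\mathbb{Z})$ as the group of tuples $(f_v)_{v\in V(G)}$ of \emph{linear} forms in $\mathbb{Z}[\mathfrak{t}]$ satisfying $\alpha(e,p)\mid f_p-f_q$ for every edge $e=\{p,q\}$, modulo the subgroup of constant tuples $(\xi,\dots,\xi)$, $\xi\in\mathfrak{t}^*$. The pairing of such a class with $[C_e]$ is computed by the Atiyah--Bott--Berline--Vergne formula (Theorem~\ref{thm:ABBV}) applied to the $T$-stable curve $C_e\cong\mathbb{P}^1$: one gets $\langle (f_v), [C_e]\rangle = (f_p - f_q)/\alpha(e,p)$, which is an integer by the edge condition and is independent of the choice of which endpoint is called $p$.

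Next I would dualize. The map $Q\colon \mathbb{Z}^{E(G)}\to H_2(X;\mathbb{Z})$, $1_e\mapsto [C_e]$, has a transpose $Q^\vee\colon H^2(X;\mathbb{Z})\to \mathbb{Z}^{E(G)}$ sending a class represented by $(f_v)$ to the tuple $\big((f_p-f_q)/\alpha(e,p)\big)_{e\in E(G)}$. Because $X$ is equivariantly formal, $H_2(X;\mathbb{Z})$ is generated by the curve classes $[C_e]$ (this is where the $T$-action is really used — e.g. via the Bia\l{}ynicki--Birula or Morse-theoretic cell structure whose $2$-cells are swept out by the invariant $\mathbb{P}^1$'s; alternatively it follows since $Q^\vee$ is injective, which in turn holds because $H^2(X)$ embeds in $\bigoplus_v \mathfrak{t}^*$ and a class with $f_p=f_q$ for all edges is constant on the connected graph $G$, hence zero in $H^2$). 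Given that $Q$ is surjective, $\ker Q = \{x\in\mathbb{Z}^{E(G)} : \langle \phi, x\rangle = 0 \ \forall \phi\in \mathrm{im}(Q^\vee)\}$, i.e. $\ker Q$ is the annihilator in $\mathbb{Z}^{E(G)}$ of the image of $Q^\vee$. So everything reduces to identifying $\mathrm{im}(Q^\vee)\otimes\mathbb{Q}$ as a subspace of $\mathbb{Q}^{E(G)}$ and showing its annihilator is exactly $K\cap\mathbb{Z}^{E(G)}$ with $K$ as defined.

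For that identification I would argue that $\mathrm{im}(Q^\vee)\otimes\mathbb{C}$ and $K$ are orthogonal complements of each other inside $\mathbb{C}^{E(G)}$ under the standard bilinear pairing. In one direction: for a closed loop $\gamma=\{(e_i,p_i)\}$ and a class $(f_v)$, the pairing of $R_\gamma(\xi)$ with $Q^\vee(f_v)$ is $\sum_i \langle\alpha(e_i,p_i),\xi\rangle \cdot (f_{p_i}-f_{q_i})/\alpha(e_i,p_i)$; writing $f_{p_i}-f_{q_i} = \mathcal{C}_1$-type integer times $\alpha(e_i,p_i)$ is not quite available in degree $2$, so instead I evaluate: since each $f_v\in\mathfrak{t}^*$, the quantity $(f_{p_i}-f_{q_i})/\alpha(e_i,p_i)$ is the scalar $a_i$ with $f_{p_i}-f_{q_i}=a_i\,\alpha(e_i,p_i)$, and the pairing becomes $\sum_i a_i\langle\alpha(e_i,p_i),\xi\rangle = \sum_i \langle f_{p_i}-f_{q_i},\xi\rangle$, which telescopes to $0$ around the closed loop $q_i=p_{i+1}$, $q_m=p_1$. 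Hence $K\subseteq (\mathrm{im}\,Q^\vee)^\perp$. For the reverse inclusion, a dimension count suffices: the quotient $\mathbb{C}^{E(G)}/K$ has dimension at most $\dim_\mathbb{C}(H^1(|G|;\mathbb{C}))^\perp$-complement, and more precisely the images $\mathrm{Im}(R_\gamma)$ for $\gamma$ ranging over a generating set of $H_1(|G|;\mathbb{Z})$ already span $K$ — this is the "moreover" clause, proved by writing any closed loop as a $\mathbb{Z}$-combination of chosen generating cycles plus boundaries (a single edge traversed back and forth contributes $R=0$), and checking $R$ is additive under concatenation. Comparing $\dim_\mathbb{C} K$ with $\mathrm{rank}\,H_2(X;\mathbb{Z}) = b_2(X) = |E(G)| - b_1(|G|) \cdot$(something)$= |E(G)| - \dim_\mathbb{C}\mathrm{im}\,Q^\vee$ via equivariant formality, one gets equality of dimensions, hence $K = (\mathrm{im}\,Q^\vee)^\perp$ exactly. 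Finally, $\ker Q = (\mathrm{im}\,Q^\vee)^\perp\cap\mathbb{Z}^{E(G)} = K\cap\mathbb{Z}^{E(G)}$, as claimed.

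The main obstacle I anticipate is pinning down the exact dimension count — i.e. proving $\dim_\mathbb{C} K = |E(G)| - b_2(X)$ — cleanly. This requires knowing that $H_2(X;\mathbb{Q})$ is spanned by the $[C_e]$ (so that $Q\otimes\mathbb{Q}$ is surjective and $\dim\ker(Q\otimes\mathbb{Q}) = |E(G)|-b_2$) together with a matching statement $\dim H^2(X;\mathbb{Q}) = |E(G)| - \dim K$ extracted from the GKM description of $H^2$. Both are standard consequences of equivariant formality plus the $2$-independence of the axial function, but assembling them and matching the combinatorics of $H_1(|G|)$ — in particular justifying the reduction to a cycle basis in the "moreover" clause — is the technical heart of the argument. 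An alternative, perhaps cleaner route that avoids the explicit Betti-number bookkeeping is to show directly that $Q^\vee$ and the map $\mathbb{Z}^{E(G)}\to \mathrm{Hom}(\mathfrak{t},\mathbb{C}^{E(G)}/\!\!\sim)$ dual to $\bigoplus_\gamma R_\gamma$ fit into a perfect pairing mod torsion, using that both $H^2(X;\mathbb{Z})$ and $H_2(X;\mathbb{Z})$ are free and the GKM presentation is self-dual in the appropriate sense; I would fall back on this if the Betti-number route gets unwieldy.
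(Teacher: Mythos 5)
Your setup is sound and agrees with the paper's up to the point where the real work happens: reduction to pairing with $H^2(X;\mathbb{C})$ via torsion-freeness and the universal coefficient theorem, lifting to $H_T^2(X;\mathbb{C})$ by equivariant formality, the GKM description of degree-two classes as tuples $(f_v)$ of linear forms with $f_p-f_q=r_e\,\alpha(e,p)$, the ABBV computation $\psi([C_e])=r_e$, and the telescoping argument showing that every such tuple $(r_e)$ annihilates every $R_\gamma(\xi)$, i.e.\ $K\subseteq(\mathrm{im}\,Q^\vee)^\perp$. All of that matches the paper. The gap is the reverse inclusion, which you correctly identify as the ``technical heart'' but then only sketch via a dimension count ($\dim_\mathbb{C}K=|E(G)|-b_2(X)$) that you do not carry out and for which you offer no actual argument; as written, the identity $b_2(X)=|E(G)|-\dim K$ is precisely what needs proving, so the proof is circular at that point.

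The paper closes this gap without any Betti-number bookkeeping, by proving the exact equality $\mathrm{im}(Q^\vee)\otimes\mathbb{C}=K^\perp$ rather than $K=(\mathrm{im}\,Q^\vee)^\perp$. The missing direction is constructive: given \emph{any} $(r_e)\in\mathbb{C}^{E(G)}$ with $\langle(r_e),R_\gamma(\xi)\rangle=0$ for all closed loops $\gamma$ and all $\xi\in\mathfrak{t}$, one builds a class $(f_v)$ by fixing a base vertex $v_0$, setting $f_{v_0}=0$, and defining $f_v:=\sum_i r_{e_i}\alpha(\epsilon_i)$ along any path from $v_0$ to $v$; the loop condition is exactly the integrability condition making this independent of the path, and connectedness of $G$ (Remark~\ref{rem:G_connected}) guarantees every vertex is reached. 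The resulting tuple satisfies the GKM divisibility condition by construction, hence defines an element of $H_T^2(X;\mathbb{C})$ restricting to $(r_e)$ under $Q^\vee$. Once $\mathrm{im}(Q^\vee)\otimes\mathbb{C}=K^\perp$ is known, biduality of subspaces under the standard nondegenerate pairing gives $\ker Q=(K^\perp)^\perp\cap\mathbb{Z}^{E(G)}=K\cap\mathbb{Z}^{E(G)}$ immediately. I recommend you replace the dimension count by this construction; your ``moreover'' argument via additivity of $R_\gamma$ under concatenation is fine and is the same as the paper's.
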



\begin{proof}
By the Universal Coefficients Theorem and torsion-freeness (Lemma~\ref{lem:torsion_free}), an element $c\in H_2(X;\mathbb{Z})$ is zero if and only if $\varphi(c)=0$ for all $\varphi\in H^2(Z;\mathbb{C})$.
By equivariant formality, this is equivalent to $\psi(c)=0$ for all $\psi\in H_T^2(X;\mathbb{C})$.

By applying Theorem~\ref{thm:GKM}, an element $\psi\in H_T^2(X;\mathbb{C})$ is equivalent to a collection $(f_v)_{v\in V(G)}$ of linear polynomials $f_v\in \mathbb{C}[t_1,\dots,t_r]$ such that $\alpha(e,p)$ divides the polynomial $f_{p} - f_{q}$ 
for all $e=\{p,q\}\in E(G)$.
Since both are linear in $t_1,\dots,t_r$, 
there must be constants $(r_e)_{e\in E(G)}$ such that
\[
f_{p}-f_{q} = r_e \alpha(e,p)
\]
for all edges $e=\{p,q\}$.
Note that $r_e$ does not depend on the orientation of $e$ because $\alpha(e,p)=-\alpha(e,q)$.
Then $(r_e)$ automatically satisfies
\[
\sum_{i=1}^m r_{e_i}\alpha(\epsilon_i)=0\in\mathfrak{t}^*\hspace{5mm} \text{ for all closed loops } \{\epsilon_i=(e_i,p_i)\}_{i=1}^m \text{  in } G.
\]
Note that this is equivalent to
\begin{equation}
\label{eqn:loopCondition}
\left\langle (r_e), R_\gamma(\xi)\right\rangle = 0
\hspace{5mm}\text{ for all closed loops }\gamma\text{ in }G \text{ and }\xi\in \mathfrak{t},
\end{equation}
where 
$\langle\cdot,\cdot\rangle$ is the standard bilinear form on $\mathbb{C}^{E(G)}$.
Conversely, any $(r_e)\in\mathbb{C}^{E(G)}$ satisfying \eqref{eqn:loopCondition} determines a non-empty family of elements $\psi\in H_T^2(X;\mathbb{C})$.
By Theorem~\ref{thm:ABBV},
\[
\psi([C_e])=\int_{C_e}\psi = \frac{f_{p} - f_{q}}{\alpha(e,p)} = r_e
\]
for all $e=\{p,q\}\in E(G)$.
Therefore, given $(d_e)\in\mathbb{Z}^{E(G)}$, we have
\begin{align*}
    Q((d_e))=0 & & \Longleftrightarrow & & \langle(d_e),(r_e)\rangle = 0 \hspace{5mm}\forall (r_e)\in\mathbb{C}^{E(G)} \text{ satisfying \eqref{eqn:loopCondition}}.
\end{align*}
In other words, $\text{Ker}(Q) = \mathbb{Z}^{E(G)}\cap (K^\perp)^\perp=\mathbb{Z}^{E(G)}\cap K$, as required.

The \emph{moreover} part follows because $R_{\gamma_1\cup \gamma_2} = R_{\gamma_1}+R_{\gamma_2}$, where $\gamma_1\cup \gamma_2$ denotes the concatenation of two loops $\gamma_1$ and $\gamma_2$ in $G$.
\end{proof}

\begin{example}
\label{ex:curve_classes}
Figure~\ref{fig:ex:curve_classes} illustrates Proposition~\ref{prop:curve_classes} for the GKM graph $G$ of $\mathbb{P}^2$ acted on by $T=(\mathbb{C}^\times)^3$ via $(\lambda_1,\lambda_2,\lambda_3)\cdot[x_1:x_2:x_3] = (\lambda_1 x_1, \lambda_2 x_2, \lambda_3 x_3)$.
Let $\partial_{\lambda_1},\partial_{\lambda_2},\partial_{\lambda_3}$ be the generators of $\mathfrak{t}$ corresponding to $\lambda_1,\lambda_2,\lambda_3$, respectively, and let $t_1,t_2,t_3\in\mathfrak{t}^*$ be the equivariant parameters corresponding to $\lambda_1,\lambda_2,\lambda_3$.

In this example, $H_1(|G|)$ is generated by the loop $\gamma$ depicted on the left hand side of Figure~\ref{fig:ex:curve_classes} in red.
The top right relation is obtained from $R_\gamma(\partial_{\lambda_1})$, while the bottom right relation is obtained from $R_\gamma(\partial_{\lambda_2})$.
The relation from $R_\gamma(\partial_{\lambda_3})$ is similar but follows from the previous two.
Note that reading off these relations simply corresponds to reading off the coefficients of the axial function along $\gamma$.

Hence, all three edges represent the same curve class in $G$.

    \begin{figure}[htb]
        \centering
        \includegraphics[width=0.55\linewidth]{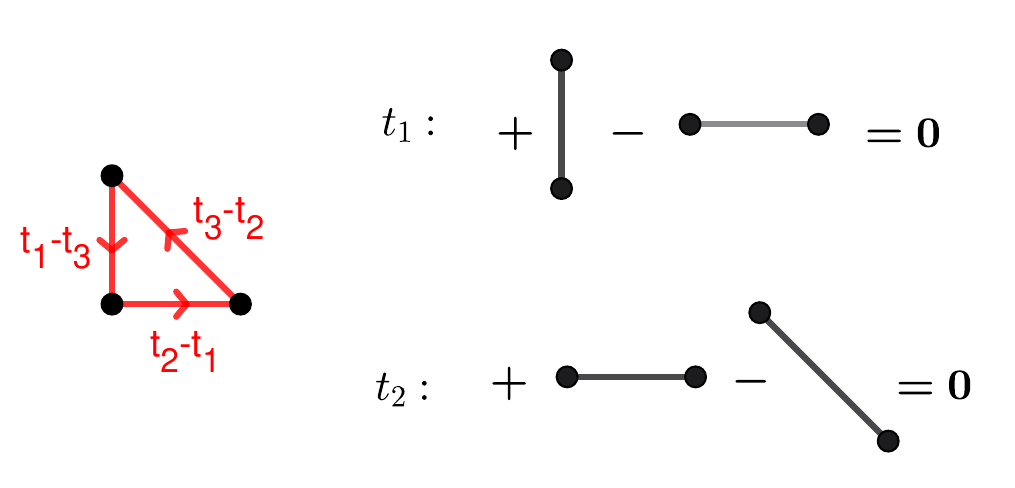}
        \caption{Illustration of Proposition~\ref{prop:curve_classes} for the GKM graph of $\mathbb{P}^2$ with the diagonal action of $(\mathbb{C}^\times)^3$.
        In the equations on the right hand side, we draw an edge to indicate its class in $H_2(\mathbb{P}^2)$.}
        \label{fig:ex:curve_classes}
    \end{figure}
\end{example}

\begin{rem}[Number of relations]
The method derived from Proposition~\ref{prop:curve_classes} produces $\mathrm{rank}(T)\cdot\dim_\mathbb{Q}H_1(|G|;\mathbb{Q})$ relations among the $|E(G)|$ generators of $\mathbb{C}^{|E(G)|}$.
We also have
\[
\dim_\mathbb{Q}H_1(|G|;\mathbb{Q}) = 1 + \frac{\dim_\mathbb{C}(X)\cdot|V(G)|}{2} - |V(G)|
\]
which is obtained using $|E(G)|=\dim_\mathbb{C}(X)\cdot|V(G)|/2$ and calculating the Euler characteristic of $|G|$ once via homology groups and once via chain complexes.
Thus, Proposition~\ref{prop:curve_classes} produces
\[
\mathrm{rank}(T)\cdot\left(1 + |V(G)|\cdot(\dim_\mathbb{C}(X)/2 -1)\right)
\]
relations among the $\dim_\mathbb{C}(X)\cdot|V(G)|/2$ generators of $\mathbb{C}^{E(G)}$.
Comparing these quantities, we see that the number of relations is generally much higher than the number of generators, and hence there are many relations between the relations.
\end{rem}

\begin{rem}[Alternative approach using Kirwan classes]
    To obtain an optimal number of relations, one may use the degree two elements $\{\psi_1,\dots,\psi_{b_2(X)}\}$ of any $H_T^*$-linear homogeneous basis of $H_T^*(X;\mathbb{C})$ and calculate their annihilator in $\mathbb{C}^{E(G)}$.
    A natural class of such bases is given by \emph{Kirwan classes} (named after \cite{MR766741}, cf. \cite[Lemma~2.3]{Godinho_Sabatini_2014}).
    Picking any real $\xi\in \mathfrak{t}$ with $\langle\xi,\alpha(\epsilon)\rangle\neq 0$ for all $\epsilon\in E(G)^\pm$, the \emph{index} $\lambda^\xi(v)$ of $v$ is the number of $\epsilon\in E(G)_v$ with $\langle \xi,\alpha(\epsilon)\rangle < 0$.
    A \emph{Kirwan class at $v\in V(G)$} is an element $\gamma\in H_T^{2\lambda^\xi(v)}(X;\mathbb{C})$ such that:
    \begin{enumerate}
        \item $\gamma|_v = \prod\alpha(e,v)$ where the product is over $\{e\in E(G)_v : \langle\xi,\alpha(e,v)\rangle < 0\}$
        \item $\gamma|_w=0$ whenever $w\in V(G)\setminus\{v\}$ with $\langle\xi,\mu(w)\rangle \le \langle\xi,\mu(v)\rangle$.
    \end{enumerate}
    Any collection $\{\psi_v\}_{v\in V(G)}$ in which $\psi_v$ is a Kirwan class at $v$ for all $v\in V(G)$ is automatically a $H_T^*$-basis of $H_T^*(X;\mathbb{C})$.
    Kirwan classes always exist \cite[Theorem~2.4.2]{GZ01} and can be computed algorithmically \cite[Lemma~2.4]{Godinho_Sabatini_2014}.
    Hence, a complete and minimal set of relations for $\{[C_e]:e\in E(G)\}$ can be obtained by computing a Kirwan class for each $v\in V(G)$ of index one.
\end{rem}
One advantage of our approach in Proposition~\ref{prop:curve_classes} is that the relations can be read off graphically directly from the GKM graph once one has obtained a basis of $H_1(|G|;\mathbb{Z})$, as seen in Example \ref{ex:curve_classes}.


\section{Gromov--Witten invariants and quantum cohomology}
\label{sec:GW_and_QH}

\subsection{Equivariant Gromov--Witten invariants}\label{sec:GW}

Let $X$ be a smooth projective variety and $\beta\in H_2(X)$.
Let $\overline{\mathcal{M}}_{g,n}(X;\beta)$ be the moduli space of stable maps\footnote{
    In particular, $C$ should be connected.
} $f\colon C\rightarrow X$ of genus $g$ with $n$ marked points and $f_*[C]=\beta$ \cite[\S7.1.1]{Cox_Katz_1999}.
The $T$-action on $X$ induces a $T$-action on $\overline{\mathcal{M}}_{g,n}(X;\beta)$.
Let
\[
\ev_i\colon \overline{\mathcal{M}}_{g,n}(X;\beta) \longrightarrow X
\]
be the $T$-equivariant evaluation morphism at the $i$-th marked points.
The \emph{equivariant Gromov--Witten invariants} associated to $(X,g,n,\beta)$ are given by
\begin{align*}
    GW_{g,n}^{X,\beta}\colon H_T^*(X;\mathbb{Q})^{\otimes n} &\longrightarrow  H_T^*(\text{pt};\mathbb{Q})\cong \mathbb{Q}[t_1,\dots,t_r]
    \\
    \alpha_1 \otimes\dots\otimes\alpha_n & \longmapsto \int^T_{[\overline{\mathcal{M}}_{g,n}(X;\beta)]^\text{vir}} \ev_1^*(\alpha_1)\smile\dots\smile\ev_n^*(\alpha_n)
\end{align*}
where $\int^T$ denotes the $T$-equivariant integral (proper pushforward to a point) and $[\overline{\mathcal{M}}_{g,n}(X;\beta)]^\text{vir}$ is the virtual fundamental class \cite[\S7.1.4]{Cox_Katz_1999}.


The virtual dimension of $\overline{\mathcal{M}}_{g,n}(X;\beta)$ is 
\[
\mathrm{vdim}(\overline{\mathcal{M}}_{g,n}(X;\beta)) = (1-g)(\dim(X) - 3)
 + \int_\beta c_1(T_X)+n
\]
and $GW_{g,n}^{X,\beta}$ is homogeneous of degree $-\mathrm{vdim}$ (in complex units where $\deg t_i = 1$).
By equivariant formality, setting the equivariant parameters to zero recovers the non-equivariant Gromov--Witten invariants.

In our symplectic setting, we will use the definition of equivariant Gromov--Witten invariants via global Kuranishi charts \cite{Abouzaid_McLean_Smith_2021,MR4807086}, because the same localization formula is available by \cite{Hirschi_2023} (see Section~\ref{sec:localization_formula}).
Note that this definition agrees with the one via pseudocycles when $(X,\omega)$ is semipositive \cite[Theorem~6.3]{Hirschi_2023}.

In the algebraic as well as the symplectic setting, Gromov--Witten invariants satisfy the Kontsevich--Manin axioms, see \cite[\S7.3.1]{Cox_Katz_1999}, \cite[\S3]{Hirschi_2023}, and references therein.
Together with equivariant localization (see Section \ref{sec:localization_formula}), they provide the main means of calculating Gromov--Witten invariants.

When $X$ is a non-compact algebraic GKM space, its $T$-equivariant Gromov--Witten invariants are still well-defined via virtual localization (see \cite[\S3.5.2]{LS17} and Section~\ref{sec:localization_formula} below). In this case, they lie in $\mathrm{Frac}(H_T^*(\mathrm{pt};\mathbb{Q}))\cong\mathbb{Q}(t_1,\dots,t_r)$.

\subsection{Equivariant quantum cohomology}
\label{sec:equivariant_QH}

An important construction from equivariant Gromov--Witten invariants is the \emph{equivariant quantum cohomology} ring.

\begin{defn}
    The \emph{equivariant quantum cohomology} is given as module by
    \[
        QH_T^*(X) := H_T^*(X;\mathbb{Q})\otimes_\mathbb{Q} \Lambda
    \]
    where $\Lambda = \widehat{\mathbb{Q}[H_2^\text{eff}(X)]}$ is the semigroup ring of the curve classes $H_2^\text{eff}(X)\subset H_2(X)$.
\end{defn}
We will write $q^\beta\in \Lambda$ for the element corresponding to $\beta\in H_2^\text{eff}(X)$ so that $q^0 = 1$ and $q^{\beta_1}q^{\beta_2}=q^{\beta_1+\beta_2}$.
Furthermore, we grade $q^\beta$ by $\int_\beta c_1(T_X)$ in complex units.
A detailed discussion of other common choices for the quantum coefficient ring $\Lambda$ may be found in \cite[\S11.1]{McDuff_Salamon_2012}.

\begin{defn}
    The \emph{equivariant (small) quantum product} $\ast_T$ on $QH_T^*(X)$ is defined for $a,b\in H_T^*(X;\mathbb{Q})$ by
    \[
    a\ast_T b := \sum_{\beta\in H_2^\text{eff}(X)} GW_{0,3}^{X,\beta}(a,b,e_i)e^i q^\beta,
    \]
    where $(e_i)_i$ and $(e^i)_i$ are dual $H_T^*$-linear bases of $H_T^*(X)$ with respect to the $H_T^*$-linear pairing $\langle a,b\rangle := \int^T_X a\smile b$.
    Equivalently, $\ast_T$ is defined to satisfy
    \[
    \langle a\ast_T b, c\rangle = \sum_{\beta\in H_2^\text{eff}(X)} GW_{0,3}^{X,\beta}(a,b,c) q^\beta
    \]
    for all $a,b,c\in H_T^*(X;\mathbb{Q})$.
    The product is then extended $\Lambda$-bilinearly to $QH_T^*(X)$.
    The \emph{equivariant (small) quantum cohomology ring} is $(QH_T^*(X), \ast_T)$.
\end{defn}
\begin{rem}
    Using the formula for  $\text{vdim}(\overline{\mathcal{M}}_{0,3}(X;\beta))$ one easily checks that $\ast_T$ is a graded product with respect to the above grading of $q^\beta$.
    It is also commutative\footnote{
        Note that in our algebraic and symplectic setting, all cohomology classes have even (real) degrees, see Lemma \ref{lem:torsion_free}.
    }
    and associative with unit $1\in H_T^*(X;\mathbb{Q})$.
\end{rem}

Examples of some quantum cohomology rings may be found in \cite[\S8]{Cox_Katz_1999} and \cite[\S11]{McDuff_Salamon_2012}.
In Section~\ref{sec:applications_and_examples} we will use our results to give some new examples of quantum products and Gromov--Witten invariants.

\subsection{Localization formula}
\label{sec:localization_formula}

An analogue for the equivariant localization formula (Theorem \ref{thm:ABBV}) is available for the moduli space of stable maps to calculate (equivariant) Gromov--Witten invariants, see \cite{Kontsevich_1995,Manin_1995,Graber_Pandharipande_1999}.
For algebraic GKM spaces, \cite{LS17} worked out an explicit formula in terms of the GKM graph that requires knowledge of the compatible connection induced by the GKM space and the relations among the curve classes $[C_e]\in H_2(X)$ for $e\in E(G)$.
Although their formula works in arbitrary genus, we will restrict notation to the genus zero case in this paper.
In this Section, we recall their formula and set up the required notation:

\begin{itemize}
    \item $X$ is an $m$-dimensional algebraic GKM space acted on by a torus $T$.
    \item $G$ is the GKM graph of $X$ and $\alpha$ is the axial function.
    \item $\{\nabla_{(e,v)} : (e,v)\in E(G)^\pm\}$ is the compatible connection induced by $X$.
    \item For any  $v\in V(G)$, $\alpha(v):=\prod_{\epsilon\in F(G)_v} \alpha(\epsilon)$.
    \item Let $e$ be an edge, and let $\epsilon\in F(G)_v$ be one of the two flags defined by $e$. Then for any positive integer $d$,
    \begin{equation}
        \label{eq:h}
        h(e, d) := \frac{(-1)^d d^{2d}}{(d!)^2\alpha(\epsilon)^{2d}}
        \prod_{i=1}^{m-1} b\left(\frac{\alpha(\epsilon)}{d}, \alpha(\epsilon_i), da_i\right)
    \end{equation}
    where
    \[
    b(u,w,a):=\begin{cases}
        \prod_{j=0}^a (w-ju)^{-1} & a\ge 0 \\
        \prod_{j=1}^{-a-1}(w+ju) & a < 0
    \end{cases}
    \]
    and where $F(G)_v = \{\epsilon_1,\dots,\epsilon_{m-1},\epsilon\}$.
    Here, $a_1,\dots,a_{m-1}$ are the integers associated to 
    $\nabla_{\epsilon}$ as in Definition~\ref{def:connection}, that is for $1\le i\le m-1$ we have $\alpha(\nabla_{\epsilon} \epsilon_i)=\alpha(\epsilon_i)-a_i\alpha(\epsilon)$. It can easily be seen that $h(e,d)$ does not depend on the choice of the flag $\epsilon$.

\end{itemize}

The connected components of the fixed locus of $T$ acting on $\overline{\mathcal{M}}_{0,n}(X,\beta)$ correspond bijectively to decorated trees $\overrightarrow{\Gamma}$ consisting of: 
\begin{itemize}
    \item A tree $\Gamma$.
    \item A map of graphs $\overrightarrow{f}\colon \Gamma \rightarrow G$, that is, a map of the sets of vertices, flags and edges of $\Gamma$ and $G$ satisfying obvious compatibility conditions. 
    \item A \emph{degree map} $\overrightarrow{d}\colon E(\Gamma)\rightarrow \mathbb{Z}_{> 0}$ s.t. $\sum_{e\in E(\Gamma)} \overrightarrow{d}(e)\left[C_{\overrightarrow{f}(e)}\right]=\beta$.
    \item A \emph{marking map} $\overrightarrow{s}\colon \{1,\dots,n\}\rightarrow V(\Gamma)$.
\end{itemize}
Let $\Gamma_n(X, \beta)$ be the set of decorated trees $\overrightarrow{\Gamma}$ as above.
We use the following notation.
\begin{itemize}
    \item Let $S_v:=\overrightarrow{s}^{-1}(v)$ be the markings at $v\in V(\Gamma)$, and let $n_v:=|S_v|$.
    \item For a flag $(e,v) \in E(\Gamma)^\pm$, $$\alpha_{(e,v)}:= \frac{\alpha(\overrightarrow{f}(e,v))}{\overrightarrow{d}(e)}.$$ 
    \item Let $\text{Aut}(\overrightarrow{\Gamma})$ be the automorphism group of the decorated graph $\overrightarrow{\Gamma}$.
\end{itemize}

The main result of \cite[Theorem~4.7]{LS17}, specialized to genus zero and reformulated for our purposes, is:
\begin{thm}\label{thm:LS17}
    For $\gamma_i\in H_T^*(X)$ we have
    $$GW_{0,n}^{X,\beta}(\gamma_1,\dots,\gamma_n) = \sum_{\overrightarrow{\Gamma}\in \Gamma_n(X,\beta)}GW_{\overrightarrow{\Gamma}}(\gamma_1,\dots,\gamma_n),$$ 
    where
    \begin{multline*}
        GW_{\overrightarrow{\Gamma}}(\gamma_1,\dots,\gamma_n) :=\frac{1}{|\mathrm{Aut}(\overrightarrow{\Gamma})|}
        \prod_{e\in E(\Gamma)} \frac{h(\overrightarrow{f}(e),\overrightarrow{d}(e))}{\overrightarrow{d}(e)}\\
         \prod_{v\in V(\Gamma)}
         \alpha(\overrightarrow{f}(v))^{\mathrm{val}(v)-1}
        \prod_{i\in S_v} \gamma_i|_{\overrightarrow{f}(v)}
        \prod_{e\in E(\Gamma)_v}\alpha_{(e,v)}^{-1}
        \left(
            \sum_{e\in E(\Gamma)_v } \alpha_{(e,v)}^{-1}
        \right)^{n_v+\mathrm{val}(v)-3}.
    \end{multline*}
\end{thm}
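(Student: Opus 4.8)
The plan is to deduce this from the virtual localization theorem of Graber--Pandharipande \cite{Graber_Pandharipande_1999}, applied to $\overline{\mathcal{M}}_{0,n}(X,\beta)$ with its induced $T$-action, together with an explicit analysis of the $T$-fixed loci and their virtual normal bundles; this is exactly the computation carried out in \cite[\S4]{LS17}, so the substance of the proof is to match their notation with the GKM-graph formalism set up above.

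First I would recall that a stable map $f\colon C\to X$ is $T$-fixed precisely when its image lies in the $1$-skeleton (the union of the $T$-fixed points and the $T$-stable $\mathbb{P}^1$'s), every irreducible component of $C$ is either contracted to a fixed point or maps onto some $C_e$ as a degree-$d$ cover totally ramified over the two fixed points, and all marked points and nodes lie over fixed points. Such data is encoded exactly by a decorated tree $\overrightarrow{\Gamma}\in\Gamma_n(X,\beta)$: vertices of $\Gamma$ record the contracted components and the fixed points they map to (via $\overrightarrow{f}$), edges record the covering components and their degrees $\overrightarrow{d}(e)$, and $\overrightarrow{s}$ records the marked points. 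The associated component of $\overline{\mathcal{M}}_{0,n}(X,\beta)^T$ is the quotient by $\mathrm{Aut}(\overrightarrow{\Gamma})$ of $\prod_{v\in V(\Gamma)}\overline{\mathcal{M}}_{0,S_v\sqcup E(\Gamma)_v}$, with unstable factors understood as points; this is the source of the prefactor $1/|\mathrm{Aut}(\overrightarrow{\Gamma})|$ and of the moduli integrals.

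Next I would assemble the contribution of each $\overrightarrow{\Gamma}$ as (restriction of the integrand) divided by the equivariant Euler class of the virtual normal bundle, integrated over the fixed locus. The integrand $\prod_i \ev_i^*\gamma_i$ restricts to $\prod_{v}\prod_{i\in S_v}\gamma_i|_{\overrightarrow{f}(v)}$. The virtual normal bundle splits into: (i) deformations of each edge cover, whose inverse Euler class produces $h(\overrightarrow{f}(e),\overrightarrow{d}(e))$ — this is where the product $\prod_{i=1}^{m-1}b(\cdot,\cdot,da_i)$ of \eqref{eq:h} appears, the integers $a_i$ being exactly the Chern splitting data of $T_X|_{C_e}$ recorded by the induced connection $\nabla_\epsilon$ as in Definition~\ref{def:connection}; (ii) smoothings of the nodes over fixed points, contributing the factors $\alpha(\overrightarrow{f}(v))^{\mathrm{val}(v)-1}$ together with the tangent-weight factors $\alpha_{(e,v)}^{-1}$ and the automorphism factors $\overrightarrow{d}(e)^{-1}$; (iii) the $\mathbb{Z}/d$ automorphisms of the edge covers, already folded into $h$ and the $1/\overrightarrow{d}(e)$. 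Finally, integrating over each $\overline{\mathcal{M}}_{0,S_v\sqcup E(\Gamma)_v}$ leaves a sum of $\psi$-class monomials; the standard genus-zero evaluation $\int_{\overline{\mathcal{M}}_{0,N}}\psi_1^{a_1}\cdots\psi_N^{a_N}=\binom{N-3}{a_1,\dots,a_N}$ together with the multinomial theorem collapses the node contributions at $v$ into the single factor $\bigl(\sum_{e\in E(\Gamma)_v}\alpha_{(e,v)}^{-1}\bigr)^{n_v+\mathrm{val}(v)-3}$.

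The main obstacle is the bookkeeping in the third step: correctly decomposing the virtual normal bundle using the deformation theory of stable maps and the normalization sequence for $C$, tracking every sign and power of the weights, and recognizing that the splitting type $a_1,\dots,a_{m-1}$ of $T_X|_{C_e}$ is precisely the connection data — this last point is where the Birkhoff--Grothendieck theorem and the discussion of \S\ref{sec:connections} enter. Once the edge factor is identified with $h(e,d)$ and the vertex factor with the $\psi$-integral, the remaining combinatorics (matching valences, marked points, and the exponents $\mathrm{val}(v)-1$ and $n_v+\mathrm{val}(v)-3$) is routine, and none of it differs in substance from \cite[Theorem~4.7]{LS17}; so in practice the proof reduces to invoking that result after the notational translation described above.
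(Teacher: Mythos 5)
Your proposal is correct and follows essentially the same route as the paper: both reduce the statement to \cite[Theorem~4.7]{LS17} specialized to $g=0$ and then evaluate the vertex contributions via the genus-zero $\psi$-class integral $\int_{\overline{\mathcal{M}}_{0,N}}\psi_1^{a_1}\cdots\psi_N^{a_N}=(N-3)!/(a_1!\cdots a_N!)$, which collapses the node-smoothing factors into $\prod_{e}\alpha_{(e,v)}^{-1}\bigl(\sum_{e}\alpha_{(e,v)}^{-1}\bigr)^{n_v+\mathrm{val}(v)-3}$. The additional material you include on fixed loci and virtual normal bundles is background for the cited theorem rather than a different argument.
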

\begin{proof}
    For $v\in V(\Gamma)$ and $e\in E(\Gamma)_v$, let
    \[
    \psi_{(e,v)}:=c_1(\mathbb{L}_{(e,v)})\in H^2(\overline{\mathcal{M}}_{0,E(\Gamma)_v\cup S_v}),
    \]
    where $\mathbb{L}_{(e,v)}$ is the line bundle over $\overline{\mathcal{M}}_{0,E(\Gamma)_v\cup S_v}$ whose fiber at a moduli point $[C_v,x_v]$ is the cotangent bundle of $C_v$ at the marked point corresponding to $(e,v)$.
    To obtain the desired formula from \cite[Theorem~4.7]{LS17}, set $g=0$ and use
    \[  \int_{\overline{\mathcal{M}}_{0,E(\Gamma)_v\cup S_v}}\frac{1}{\prod_{e\in E(\Gamma)_v}\alpha_{(e,v)}-\psi_{(e,v)}}
    =
    \prod_{e\in E(\Gamma)_v}\alpha_{(e,v)}^{-1}
        \left(
            \sum_{e\in E(\Gamma)_v} \alpha_{(e,v)}^{-1}
        \right)^{n_v+\text{val}(v)-3}
    \]
    which is obtained by expressing the left hand fraction as power series in $\psi_{(e,v)}/ \alpha_{(e,v)}$, using $H_T^*$-linearity of the equivariant integral, and 
    \[ \int_{\overline{\mathcal{M}}_{0,n}}\psi_1^{a_1}\cdots\psi_n^{a_n}=\frac{(n-3)!}{a_1!\dots a_n!}
    \]
    (cf. \cite[Lemma 4.3]{LS17}).
\end{proof}
The following theorem is proved in \cite{Hirschi_2023}.
\begin{thm}\label{thm:Hirschi}
    The formula in Theorem~\ref{thm:LS17} (and its version for arbitrary genus)
    also holds for Gromov--Witten invariants of compact Hamiltonian GKM spaces, defined via global Kuranishi charts.
\end{thm}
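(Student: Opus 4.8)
The plan is to reduce the statement to the equivariant virtual localization formula for symplectic Gromov--Witten invariants established in \cite{Hirschi_2023}, and then to observe that the passage from that formula to the GKM-graph expression of Theorem~\ref{thm:LS17} is formally identical to the algebraic case. Concretely, one equips $\overline{\mathcal{M}}_{0,n}(X,\beta)$ with the $T$-equivariant global Kuranishi chart of \cite{Abouzaid_McLean_Smith_2021,MR4807086} built from the Cauchy--Riemann operator of the $T$-invariant almost complex structure $J$; since this chart is finite-dimensional and the $T$-action on it is compatible with the obstruction bundle and the virtual class, \cite{Hirschi_2023} yields a Graber--Pandharipande-type formula writing $GW_{0,n}^{X,\beta}$ as a sum over connected components of the $T$-fixed locus, each weighted by the inverse equivariant Euler class of its virtual normal bundle.

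The first step is to check that these components are indexed by the same set of decorated trees $\Gamma_n(X,\beta)$ as in the algebraic setting. A $J$-holomorphic stable map is $T$-fixed exactly when each of its components is either constant at a point of $X^T$ or a multiple cover of a $T$-stable $\mathbb{P}^1$ fully ramified over its two fixed points; the argument for this uses only that $X^T$ is finite and that the $T$-stable $\mathbb{P}^1$ are isolated, both of which hold for Hamiltonian GKM spaces. Hence the fixed component attached to $\overrightarrow{\Gamma}$ is, up to the finite group $\mathrm{Aut}(\overrightarrow{\Gamma})$, the product $\prod_{v\in V(\Gamma)}\overline{\mathcal{M}}_{0,E(\Gamma)_v\cup S_v}$ of Deligne--Mumford spaces, precisely as in \cite{LS17}, and the $\psi$-class integrals produced by the localization are the universal intersection numbers on $\overline{\mathcal{M}}_{0,m}$. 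In particular the reformulation step carried out in the proof of Theorem~\ref{thm:LS17} goes through verbatim.

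It then remains to identify the inverse Euler class of the virtual normal bundle with the product of the factors $h(\overrightarrow{f}(e),\overrightarrow{d}(e))/\overrightarrow{d}(e)$, $\alpha(\overrightarrow{f}(v))^{\mathrm{val}(v)-1}$ and $\alpha_{(e,v)}^{-1}$. This Euler class factorizes over the vertices, edges and flags of $\Gamma$, and each factor is governed by local equivariant data: the $T$-weights on $T_{X,v}$ for $v\in X^T$, which is the axial function $\alpha$; and the equivariant splitting $T_X|_{C_e}\cong\bigoplus_i\mathcal{O}_{\mathbb{P}^1}(a_i)$, available for Hamiltonian GKM spaces since $C_e$ is a $T$-equivariant $\mathbb{P}^1$ (the $T$-invariant $J$ being automatically integrable on a surface) carrying the $T$-equivariant holomorphic bundle $(T_X|_{C_e},J)$, with the integers $a_i$ being exactly those of the induced connection of Definition~\ref{def:induced_connection}. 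Since every factor in the Graber--Pandharipande bookkeeping depends only on the equivariant weights of the deformation and obstruction spaces $H^\bullet(C,f^*T_X)$ along the fixed map, and these weights are read off from $\alpha$ and the $a_i$ exactly as in \cite{LS17}, one recovers $GW_{\overrightarrow{\Gamma}}$ term by term.

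The main obstacle is this last matching: one needs to know that Hirschi's global Kuranishi chart is compatible with the combinatorial decomposition of the fixed locus, so that its virtual normal bundle genuinely splits into the node-smoothing, edge-deformation and infinitesimal-automorphism pieces with the expected equivariant weights --- and not merely that it has the correct class in equivariant K-theory. In particular one must see that the obstruction part contributes the factor $\prod_v\alpha(\overrightarrow{f}(v))^{\mathrm{val}(v)-1}$ arising from the $H^1$-term of the normalization sequence of the fixed domain curve. This compatibility is the technical content of \cite{Hirschi_2023}; granting it, the remaining identification with Theorem~\ref{thm:LS17} is the same algebra already performed there.
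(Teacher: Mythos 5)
The paper offers no independent proof of this statement: it is attributed entirely to \cite{Hirschi_2023}, and your proposal ultimately does the same, deferring the one genuinely technical point (that the global Kuranishi chart's virtual normal bundle localizes with the expected vertex/edge/flag factorization) to that reference while correctly sketching the surrounding bookkeeping --- the classification of $T$-fixed stable maps, the identification of fixed components with decorated trees, and the reading-off of weights from the axial function and the equivariant splitting of $T_X|_{C_e}$. So your argument is sound and takes essentially the same route as the paper, just with the intermediate steps made explicit.
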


\subsection{Independence of the connection}
The formula stated in Theorem~\ref{thm:LS17} appears to depend not only on the underlying GKM graph $G$, but also on the compatible connection on $G$ induced by $X$ (cf. Section~\ref{sec:connections}).
This dependence arises from the presence of the $h$-factors in each individual summand.
A priori, this could complicate the computation of Gromov--Witten invariants, as there may be two GKM spaces that share the same abstract GKM graph but induce different compatible connections (cf. Remark~\ref{rem:computing_conn}), possibly leading to different Gromov--Witten invariants.
The implication of the following theorem is that we do not need to compute the induced connection, because every compatible connection 
leads to the same computational result.




\begin{thm}\label{thm:con_indep}
    The formula in Theorem \ref{thm:LS17} (and its version for arbitrary genus) is independent of the choice of compatible connection on the GKM graph $G$.
\end{thm}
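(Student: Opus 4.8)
The plan is to show that the right-hand side of Theorem~\ref{thm:LS17} is unchanged, term by term, when the induced connection is replaced by an arbitrary compatible connection on $G$. First I would observe that in each summand $GW_{\overrightarrow{\Gamma}}$ the connection enters \emph{only} through the factors $h(\overrightarrow{f}(e),\overrightarrow{d}(e))$: the order of $\mathrm{Aut}(\overrightarrow{\Gamma})$, the products $\alpha(\overrightarrow{f}(v))^{\mathrm{val}(v)-1}$, the restrictions $\gamma_i|_{\overrightarrow{f}(v)}$, the scalars $\alpha_{(e,v)}$, and the index set $\Gamma_n(X,\beta)$ depend only on $(G,\alpha)$ and on the relations among the curve classes $[C_e]$, which are connection-free by Proposition~\ref{prop:curve_classes}. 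Moreover, $h(e,d)$ sees the connection only through the integers $a_1,\dots,a_{m-1}$ of \eqref{eq:h}, fixed by $\alpha(\nabla_\epsilon\epsilon_i)=\alpha(\epsilon_i)-a_i\alpha(\epsilon)$. Hence it is enough to prove: for every edge $e$ and every $d>0$, the value $h(e,d)$ is the same for all compatible connections.

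Since Definition~\ref{def:connection} imposes no condition relating $\nabla_{(e,p)}$ at distinct edges, the set of compatible connections is a product, over the edges, of the sets of admissible local bijections. I would therefore fix one edge $e=\{v_1,v_2\}$ and one flag $\epsilon=(e,v_1)$ and reduce to showing that $\prod_{i=1}^{m-1} b\left(\alpha(\epsilon)/d,\alpha(\epsilon_i),d a_i\right)$ is unchanged when $\nabla_\epsilon\colon F(G)_{v_1}\to F(G)_{v_2}$ is replaced by another admissible bijection. Condition~\eqref{def:connection:as} forces $\alpha(\nabla_\epsilon\epsilon_i)\equiv\alpha(\epsilon_i)$ modulo $\mathbb{Z}\,\alpha(\epsilon)$, so two admissible bijections differ by a permutation of $F(G)_{v_2}$ preserving the partition into residue classes of $\mathbb{Z}\,\alpha(\epsilon)$, and $\nabla_\epsilon$ matches the flags at $v_1$ to those at $v_2$ one class at a time. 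Writing such a permutation as a product of transpositions inside single classes — each of which keeps the bijection admissible — the problem reduces to the case in which two targets $\eta=\nabla_\epsilon\epsilon_i$, $\eta'=\nabla_\epsilon\epsilon_{i'}$ from a common class are interchanged: then $\alpha(\epsilon_{i'})-\alpha(\epsilon_i)=p\,\alpha(\epsilon)$ for some $p\in\mathbb{Z}$, the pair $(a_i,a_{i'})$ becomes $(a_{i'}-p,\,a_i+p)$, and every other $a_j$ is left alone.

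What remains is an elementary identity for $b$. I would first check, by a short case analysis, that $b(u,w,a)=b(u,w,a-1)(w-au)^{-1}$ for \emph{every} $a\in\mathbb{Z}$. Iterating this in steps of $d$ identifies each factor $b\left(\alpha(\epsilon)/d,\,\alpha(\epsilon_j),\,d a_j\right)$, up to sign, with the product of the linear forms at the consecutive lattice points of the line $\alpha(\epsilon_j)+\mathbb{Z}\,\alpha(\epsilon)/d$ running between $\alpha(\epsilon_j)$ and the connection image $\alpha(\nabla_\epsilon\epsilon_j)=\alpha(\epsilon_j)-a_j\alpha(\epsilon)$ (inclusive or exclusive of the endpoints according to the sign of $a_j$), each raised to the power $-1$ when the direction from $\alpha(\epsilon_j)$ to the image decreases the weight and $+1$ otherwise. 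A short telescoping computation in this description shows that, for two source flags $\epsilon_i,\epsilon_{i'}$ in a common residue class, the signed multiset of lattice points carried by $b(\cdot,\alpha(\epsilon_i),d a_i)\,b(\cdot,\alpha(\epsilon_{i'}),d a_{i'})$ depends only on the unordered pairs $\{\alpha(\epsilon_i),\alpha(\epsilon_{i'})\}$ and $\{\alpha(\eta),\alpha(\eta')\}$; in particular it is invariant under the swap. Equivalently, within one residue class the product $\prod_j b\left(\alpha(\epsilon)/d,\,\alpha(\epsilon_j),\,d a_j\right)$ admits a closed form in which the exponent of each lattice point is computed purely from how many source and how many target weights of the class precede it, with no reference to the matching $\nabla_\epsilon$. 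This yields the connection-independence of $h(e,d)$, and hence of the full formula.

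The main obstacle I anticipate is precisely this combinatorial lemma: setting up the bookkeeping so that $b$ is uniformly an oriented, possibly degenerate, interval of lattice points with its far endpoint pinned to $\alpha(\nabla_\epsilon\epsilon_j)$, and handling the two definitional branches of $b$ (the cases $a\ge 0$ and $a<0$) and the sign of a ``reversed'' interval without error. Once that is in hand the reductions above are purely formal; and, combined with Theorem~\ref{thm:LS17}, the statement shows that the formula evaluated with \emph{any} compatible connection still computes $GW_{0,n}^{X,\beta}$.
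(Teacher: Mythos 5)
Your proposal is correct, and it reaches the same destination as the paper --- the only connection-dependent ingredient is $h(e,d)$, and the product $\prod_i b(\alpha(\epsilon)/d,\alpha(\epsilon_i),da_i)$ depends only on the unordered multisets of weights at the two ends of $e$ within each residue class modulo $\mathbb{Z}\alpha(\epsilon)$ --- but by a genuinely different mechanism. The paper invokes the identity (from Liu's Example~19) expressing the factors of $b(u,w,a)$ as the exponents in the character sum $\frac{e^{w}}{1-e^{-u}}+\frac{e^{w-au}}{1-e^{u}}=\sum_j c_je^{f_j}$; summing over $i$ splits the total into one sum over source weights plus one sum over target weights, which is manifestly symmetric in the matching, and linear independence of characters then gives uniqueness of the factorization. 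You instead reduce to a single transposition within a residue class (using that admissible bijections must preserve those classes and that the recorded change $(a_i,a_{i'})\mapsto(a_{i'}-p,\,a_i+p)$ is correct) and verify invariance via the recursion $b(u,w,a)=b(u,w,a-1)(w-au)^{-1}$ for all $a\in\mathbb{Z}$, reinterpreting each $b$-factor as a signed interval of lattice points pinned between a source weight and its image. Your closed form --- exponent of a lattice point determined by how many sources lie strictly to one side and how many targets strictly to the other --- is precisely the expression $B_i=\prod_{f}f^{\iota(f)+\iota'(f)-|E_i|}$ that the paper records separately in its ``connection-free expression'' subsection, so your route proves the theorem and its corollary reformulation in one pass. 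What the paper's argument buys is brevity and no case analysis on the sign of $a$; what yours buys is elementarity (no appeal to the exponential identity or to linear independence of characters) at the cost of the interval bookkeeping you correctly flag as the delicate point. One small item worth making explicit: condition~(2) of the definition of a compatible connection together with pairwise linear independence of weights forces $\nabla_{(e,p)}(e,p)=(e,q)$, so every admissible bijection does restrict to $F(G)_p\setminus\{\epsilon\}\to F(G)_q\setminus\{\overline{\epsilon}\}$ as your setup assumes.
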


\begin{proof}
    Let $e=\{p,q\}$ be an edge of $G$, $\epsilon=(e,p)\in E(G)^\pm$ and $d\ge 1$.
    Let $\epsilon_1,\dots,\epsilon_{m-1}$ and $a_1,\dots,a_{m-1}$ be as in Section~\ref{sec:localization_formula}.
    It suffices to show that the factor 
    \[
        h(e, d) = \frac{(-1)^d d^{2d}}{(d!)^2\alpha(\epsilon)^{2d}}
        \prod_{i=1}^{m-1} b\left(\frac{\alpha(\epsilon)}{d}, \alpha(\epsilon_i), da_i\right)
    \]
    from \cite[Lemma~4.5]{LS17} does not depend on the choice of compatible connection, since this is the only point where the connection enters the formula.
    By definition, we have
    \[
    b(u,w,a)=\begin{cases}
        \prod_{j=0}^a (w-ju)^{-1} & a\ge 0 \\
        \prod_{j=1}^{-a-1}(w+ju) & a < 0.
    \end{cases}
    \]
    As noted in \cite[Example~19]{Liu12}, the factors of $b(u,w,a)$ are given by expressing
    \begin{equation}\label{eqn:exp_frac_sum}
    \frac{e^{w}}{1-e^{-u}} + \frac{e^{w-au}}{1-e^{u}}
    \end{equation}
    as $\sum_j c_je^{f_j}$ where $c_j\in\mathbb{Z}$ and $f_j\in \mathfrak{t}^\ast$.
    Then 
    $b(u,v,w) = \prod_j f_j^{c_j}$.
    Note that the $(f_j,c_j)$ are unique by linear independence of characters.
    If $\nabla_\epsilon(\epsilon_i)=\epsilon_i'$, then Equation~\eqref{eqn:exp_frac_sum} for $b\left(\frac{\alpha(\epsilon)}{d}, \alpha(\epsilon_i), da_i\right)$ is
    \[
    \frac{e^{\alpha(\epsilon_i)}}{1 - e^{-\alpha(\epsilon)/d}}
    + \frac{e^{\alpha(\epsilon_i')}}{1-e^{\alpha(\epsilon)/d}}.
    \]
    Summing this over all $i\in\{1,\dots,m-1\}$ gives
    \[
    \sum_{i=1}^{m-1} \frac{e^{\alpha(\epsilon_i)}}{1 - e^{-\alpha(\epsilon)/d}}
    + \sum_{i=1}^{m-1} \frac{e^{\alpha(\epsilon_i')}}{1-e^{\alpha(\epsilon)/d}}.
    \]
    This expression no longer depends on the bijection 
    $$\nabla_\epsilon\colon\{\epsilon_1,\dots,\epsilon_{m-1}\}\longrightarrow\{\epsilon_1',\dots,\epsilon_{m-1}'\}.$$
    As before, writing this as $\sum_j c_je^{f_j}$ gives
    $$\prod_{i=1}^{m-1} b\left(\frac{\alpha(\epsilon)}{d}, \alpha(\epsilon_i), da_i\right)=\prod_j f_j^{c_j},$$
    which is therefore independent of the connection.
\end{proof}

\subsection{A connection-free expression.}
The expression
$$\prod_{i=1}^{m-1} b\left(\frac{\alpha(\epsilon)}{d}, \alpha(\epsilon_i), da_i\right)$$
can be reformulated in a fully explicit manner that does not depend on the connection. 
First, we partition the sets $E(G)_{p}\setminus\{\epsilon\}$ and $E(G)_{q}\setminus\{\overline{\epsilon}\}$ as
\[
E(G)_{p}\setminus\{\epsilon\} = E_1\sqcup\dots\sqcup E_k,\hspace{10mm}
E(G)_{q}\setminus\{\overline{\epsilon}\} = E_1'\sqcup\dots\sqcup E_k'
\]
where each $|E_i|=|E_i'|>0$ and for any $\tilde{\epsilon},\tilde{\epsilon}'\in E_i\cup E_i'$ we have $\alpha(\tilde{\epsilon})-\alpha(\tilde{\epsilon}')\in \mathbb{Z}\cdot\alpha(\epsilon)$.
The partition is made unique up to labeling by requiring the number of parts to be minimal.
Such a partition exists as soon as the GKM graph admits a compatible connection $\nabla_\epsilon$ at $\epsilon$, but it is independent of the choice of connection.
(Choosing a connection amounts to choosing a bijection $E_i\rightarrow E_i'$ for each $i$.)
We then have
\[
    \prod_{i=1}^{m-1} b\left(\frac{\alpha(\epsilon)}{d}, \alpha(\epsilon_i), da_i\right)
    = \prod_{i=1}^k B_i
\]
where the rational functions $B_1,\dots,B_k$ in $t_1,\dots,t_r$ are defined as follows.
Fix $i\in\{1,\dots,k\}$.
By definition, the weights $E_i\cup E_i'$ all lie on the same line $L_i$ in $\mathfrak{t}^*$ with slope $\alpha(\epsilon)$.
Let $\Lambda_i\subset L_i$ be the lattice $E_i+\frac{1}{d}\mathbb{Z}\cdot \alpha(\epsilon)$.
Orient $L_i$ such that \textit{to the right} means in positive $\alpha(\epsilon)$-direction.
For any $f\in \Lambda_i$, define
\begin{align*}
    \iota(f) &:= |\{p\in E_i: p\text{ is strictly to the left of }f\}|,\\
    \iota'(f) &:= |\{p\in E_i': p\text{ is strictly to the right of }f\}|.
\end{align*}
$B_i$ is then defined by the (finite) product
\[
B_i = \prod_{f\in \Lambda_i} f^{\iota(f)+\iota'(f) - |E_i|}.
\]
Theorem \ref{thm:con_indep} and Proposition \ref{prop:curve_classes} give the following theorem.

\begin{thm}\label{cor:GKM_determines_GW}
    The equivariant Gromov--Witten invariants of an 
    Hamiltonian or algebraic
    GKM space can be computed purely in terms of the GKM graph, without knowledge of the compatible connection induced by the space.
\end{thm}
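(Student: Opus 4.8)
The plan is to assemble ingredients already established. By Theorem~\ref{thm:LS17} in the algebraic case and Theorem~\ref{thm:Hirschi} in the Hamiltonian case, the invariant $GW_{0,n}^{X,\beta}(\gamma_1,\dots,\gamma_n)$ equals a finite sum $\sum_{\overrightarrow{\Gamma}\in\Gamma_n(X,\beta)}GW_{\overrightarrow{\Gamma}}(\gamma_1,\dots,\gamma_n)$, so it suffices to check that every ingredient of the right-hand side is determined by the abstract GKM graph $(G,\alpha)$ together with the purely combinatorial input: the class $\beta$, and the equivariant cohomology classes $\gamma_i$ with their restrictions $\gamma_i|_v\in H_T^*$, which by Theorem~\ref{thm:GKM} are themselves encoded by $G$. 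First I would treat the index set $\Gamma_n(X,\beta)$: a decorated tree is a tree $\Gamma$, a graph morphism $\overrightarrow{f}\colon\Gamma\to G$, a degree map $\overrightarrow{d}$ with $\sum_{e}\overrightarrow{d}(e)[C_{\overrightarrow{f}(e)}]=\beta$, and a marking map. The only non-formal constraint is recognizing which $\mathbb{Z}$-combinations of the $[C_e]$ equal $\beta$, and by Proposition~\ref{prop:curve_classes} the kernel of $Q\colon\mathbb{Z}^{E(G)}\to H_2(X;\mathbb{Z})$ — equivalently all relations among the $[C_e]$ — is computed explicitly from a basis of $H_1(|G|;\mathbb{Z})$; hence $\beta$ is to be understood as an element of $\mathbb{Z}^{E(G)}/\ker Q$ and $\Gamma_n(X,\beta)$ is a combinatorially defined finite set.

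Next I would inspect the individual summand $GW_{\overrightarrow{\Gamma}}$. The factors $|\mathrm{Aut}(\overrightarrow{\Gamma})|$, $\alpha(\overrightarrow{f}(v))^{\mathrm{val}(v)-1}$, $\gamma_i|_{\overrightarrow{f}(v)}$, $\alpha_{(e,v)}=\alpha(\overrightarrow{f}(e,v))/\overrightarrow{d}(e)$, the products $\prod_{e\in E(\Gamma)_v}\alpha_{(e,v)}^{-1}$, and the exponent $n_v+\mathrm{val}(v)-3$ visibly depend only on $(G,\alpha)$ and $\overrightarrow{\Gamma}$. The sole place where the geometry of $X$ enters is through the $h$-factors $h(\overrightarrow{f}(e),\overrightarrow{d}(e))$, whose definition uses the integers $a_i$ of the compatible connection induced by $X$ (Definition~\ref{def:induced_connection}). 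Here I would invoke Theorem~\ref{thm:con_indep}: the full expression of Theorem~\ref{thm:LS17} is unchanged if the induced connection is replaced by any compatible connection on $G$, and such a connection exists precisely because $X$ induces one. Alternatively — and this makes the statement entirely manifest — one replaces $\prod_i b(\alpha(\epsilon)/d,\alpha(\epsilon_i),da_i)$ by the connection-free product $\prod_{i=1}^k B_i$ attached to the intrinsic partitions $E(G)_p\setminus\{\epsilon\}=E_1\sqcup\dots\sqcup E_k$ and $E(G)_q\setminus\{\overline{\epsilon}\}=E_1'\sqcup\dots\sqcup E_k'$, in which no connection is named at all. Either way, $h(\overrightarrow{f}(e),\overrightarrow{d}(e))$ is a rational function in $t_1,\dots,t_r$ computed from $(G,\alpha)$ alone.

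Putting the two observations together, each term of Theorem~\ref{thm:LS17} is a rational function in the equivariant parameters produced from the GKM graph and the combinatorial input, and the finite sum — which a priori lies in $H_T^*\cong\mathbb{Q}[t_1,\dots,t_r]$ — therefore computes $GW_{0,n}^{X,\beta}$ without any reference to the connection induced by $X$. I expect the only genuinely load-bearing point, beyond quoting Theorems~\ref{thm:LS17}, \ref{thm:Hirschi} and~\ref{thm:con_indep}, to be the bookkeeping of curve classes: one must verify that the enumeration of decorated trees uses $\beta$ only through its image in $\mathbb{Z}^{E(G)}/\ker Q$, so that Proposition~\ref{prop:curve_classes} really does supply everything needed; the rest is a direct inspection of the formula.
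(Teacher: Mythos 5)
Your proposal is correct and follows essentially the same route as the paper, whose proof of this theorem is precisely the combination of Theorem~\ref{thm:LS17} (and Theorem~\ref{thm:Hirschi} in the Hamiltonian case), the connection-independence result Theorem~\ref{thm:con_indep} (equivalently the connection-free factors $B_i$), and Proposition~\ref{prop:curve_classes} for the curve-class bookkeeping in the enumeration of decorated trees. Your write-up merely makes explicit the term-by-term inspection that the paper leaves implicit.
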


\section{A computational tool}
\label{sec:tool}
A remarkable feature of algebraic and Hamiltonian GKM spaces is that many of their key invariants—including (equivariant) Gromov--Witten invariants—are determined purely by their GKM graph.
This makes them especially amenable to explicit computations, provided one has the appropriate computational framework.

For this purpose, we have developed the package {\pkg} in \texttt{Julia} (see \cite{bezanson2017julia}), designed for efficient and versatile computations in this setting.

The package is built on the computer algebra system \texttt{OSCAR} \cite{MR4886702}, and significantly extends a series of earlier packages by the second author \cite{MR4383164,Mur24,MR4786714}, incorporating new algorithms and a broader computational scope.
Table~\ref{tab:package_features} summarizes its current capabilities.
To support positive genus computations, we used the package introduced in \cite{Yang2010} to compute the Hodge integrals pre-loaded in {\pkg}, and \cite{McKay,MR3131381} for the generation of the graphs.

In the remainder of this section we give a concise, hands-on introduction to {\pkg}, aimed at getting the reader up and running quickly.
Full technical documentation, including installation instructions and advanced examples, is available at
\begin{center}
\pkgwebsite
\end{center}

First, we load \texttt{OSCAR} and {\pkg} in the Julia command line, as they are already installed in our machine. After that, we define $G$ to be the GKM graph of the Grassmannian $G(2,4)$.
\begin{shaded}
\begin{MyVerb}[fontsize=\small]
\textcolor{green}{julia>} using Oscar, GKMtools
\textcolor{green}{julia>} G = grassmannian(GKM_graph, 2, 4)
GKM graph with 6 nodes, valency 4 and axial function:
13 -> 12 => (0, -1, 1, 0)
14 -> 12 => (0, -1, 0, 1)
14 -> 13 => (0, 0, -1, 1)
23 -> 12 => (-1, 0, 1, 0)
23 -> 13 => (-1, 1, 0, 0)
24 -> 12 => (-1, 0, 0, 1)
24 -> 14 => (-1, 1, 0, 0)
24 -> 23 => (0, 0, -1, 1)
34 -> 13 => (-1, 0, 0, 1)
34 -> 14 => (-1, 0, 1, 0)
34 -> 23 => (0, -1, 0, 1)
34 -> 24 => (0, -1, 1, 0)
\end{MyVerb}
\end{shaded}

This command defines $G$ and it shows all edges of $G$ with the value of the axial function for each (oriented) edge. The symbols $\{13,12,14,\ldots\}$ are the vertices of $G$, each one represents a certain $(\mathbb{C}^\times)^4$-invariant flag of $\mathbb{C}^4$. Now, we define $H$ to be the GKM graph of the product $G(2,4)\times G(2,4)$.
\begin{shaded}
\begin{MyVerb}[fontsize=\small]
\textcolor{green}{julia>} H = G*G
GKM graph with 36 nodes, valency 8 and axial function:
13,12 -> 12,12 => (0, -1, 1, 0, 0, 0, 0, 0)
14,12 -> 12,12 => (0, -1, 0, 1, 0, 0, 0, 0)
[output partly omitted]
\end{MyVerb}
\end{shaded}
In order to define the curve classes of $H$, we call the following function:
\begin{shaded}
\begin{MyVerb}[fontsize=\small]
\textcolor{green}{julia>} print_curve_classes(H)
34,12 -> 24,12: (1, 0), Chern number: 4
12,13 -> 12,12: (0, 1), Chern number: 4
[output partly omitted]
\end{MyVerb}
\end{shaded}
In this case, $H_2(H, \mathbb{Z})\cong \mathbb{Z}^2$. So in order to define the class $b$ corresponding to $(1,1)$, we call
\begin{shaded}
\begin{MyVerb}[fontsize=\small]
\textcolor{green}{julia>} b1 = curve_class(H, "34,12", "24,12");
\textcolor{green}{julia>} b2 = curve_class(H, "12,13", "12,12");
\textcolor{green}{julia>} b = b1 + b2
(1, 1)
\end{MyVerb}
\end{shaded}
Finally, in order to compute the Gromov--Witten invariant
$$\int_{[\overline{\mathcal{M}}_{0,2}(H;b)]^\text{vir}} \ev_1^*([\mathrm{pt}])\smile\ev_2^*(\mathrm{PD}(b))=0,$$
where $\mathrm{PD}(b)$ is the Poincar{\'e} dual of $b$, we use the following code:
\begin{shaded}
\begin{MyVerb}[fontsize=\small]
\textcolor{green}{julia>} e1 = ev(1, point_class(H, 1));
\textcolor{green}{julia>} PD = poincare_dual(gkm_subgraph_from_vertices(H, ["34,12", "24,12"]));
\textcolor{green}{julia>} PD += poincare_dual(gkm_subgraph_from_vertices(H, ["12,13","12,12"]));
\textcolor{green}{julia>} gromov_witten(H, b, 2, e1 * ev(2, PD); fast_mode = true)
0
\end{MyVerb}
\end{shaded}

Let us briefly see another example. Let $F$ be the GKM graph of the homogeneous variety $G_2/B$. The vertices of $F$ are labeled by elements of the Weyl group of $G_2$, generated by the reflections $\{s_1,s_2\}$ with respect to the two roots. Let $\mathrm{id}$ be the neutral element of the Weyl group. Let $c_1$ and $c_2$ be the $T$-invariant curves from the vertex $\mathrm{id}$ to, respectively, the vertices $s_1$ and $s_2$. Thus,
$$\int_{[\overline{\mathcal{M}}_{0,1}(F;c_1)]^\text{vir}} \ev_1^*([\mathrm{pt}])=
\int_{[\overline{\mathcal{M}}_{0,1}(F;c_2)]^\text{vir}} \ev_1^*([\mathrm{pt}])=1$$
can be obtained as:
\begin{shaded}
\begin{MyVerb}[fontsize=\small]
\textcolor{green}{julia>} F = generalized_gkm_flag(root_system(:G, 2));
\textcolor{green}{julia>} c1 = curve_class(F, "id", "s1");
\textcolor{green}{julia>} c2 = curve_class(F, "id", "s2");
\textcolor{green}{julia>} gromov_witten(F, c1, 1, ev(1, point_class(F, 1)))
1
\textcolor{green}{julia>} gromov_witten(F, c2, 1, ev(1, point_class(F, 1)))
1
\end{MyVerb}
\end{shaded}
We used only the curves $c_1$ and $c_2$ in this example. But all $T$-invariant curves of $F$ are supported using the function \verb|curve_class|.

Finally, we provide a computation of a positive genus Gromov--Witten invariant. 
Let $F$ be a twisted flag manifold (see Section~\ref{sec:twisted_flag}) and $\beta$ the  unique primitive curve class with $\int_\beta c_1(T_F)=0$ (cf. Figure~\ref{fig:F3_twisted}).
The following code computes the invariant
$$\int_{[\overline{\mathcal{M}}_{1,0}(F;d\beta)]} 1$$
for $d=1,2,3$.

\begin{shaded}
\begin{MyVerb}[fontsize=\small]
\textcolor{green}{julia>} F = gkm_3d_twisted_flag();
\textcolor{green}{julia>} beta = curve_class(F, Edge(3, 4)); # unique edge with C_1 = 0.
\textcolor{green}{julia>} [gromov_witten(F, d*beta, 0, class_one(); g=1) for d in 1:3]
1//12
-1//24
-29//36
\end{MyVerb}
\end{shaded}
This agrees with the calculation on the appropriately linearized equivariant Calabi--Yau vector bundle $\mathcal{O}_{\mathbb{P}^1}(-3)\oplus\mathcal{O}_{\mathbb{P}^1}(1)$, which models the neighborhood of \verb|Edge(3, 4)| on the GKM graph of $F$.

\begin{table}[htb]
    \centering
    \begin{tabular}{|p{3cm}|p{4cm}|p{4cm}|}
        \hline
        \textbf{Construction}
        & \textbf{Supported operations}
        & \textbf{Examples} 
        \\\hline\hline
        \parbox{3cm}{\textbf{GKM graphs}}
        & \parbox{4cm}{
            \begin{itemize}[leftmargin=*]
                \item Products. \item Blowups.
                \item Vector bundles. \item Projectivization thereof.
                \item Subgraphs. 
                \item Betti numbers.
            \end{itemize}
        }
        & \parbox{4cm}{
            \begin{itemize}[leftmargin=*]
                \item Smooth projective toric varieties.
                \item (partial) flag varieties$^{(1)}$
                \item Homogeneous spaces.
                \item Smooth Schubert varieties in $G/P$.
                \item 3D GKM fibrations \cite{GKZ20}.
            \end{itemize}
        }
        \\\hline
        \parbox{3cm}{\textbf{Compatible\,\,\,\, Connections}}
        &
        \parbox{4cm}{
        \begin{itemize}[leftmargin=*]
            \item Calculation of the unique one for 3-ind. GKM graphs.
            \item Compatibility check.
        \end{itemize}
        }
        &
        \parbox{4cm}{
        \begin{itemize}[leftmargin=*]
            \item 
             Compatible connection from splitting of $T_X$ in all standard examples.
        \end{itemize}
        }
        \\\hline
        \parbox{3cm}{
        \textbf{Equivariant\,\,\,\, Cohomology}
        }
        & \parbox{4cm}{
            \begin{itemize}[leftmargin=*]
                \item arithmetic: $+, -, \times$.
                \item Equivariant integration.
            \end{itemize}
        }
        & \parbox{4cm}{
            \begin{itemize}[leftmargin=*]
                \item Chern classes.
                \item Poincar{\'e} duals of GKM subgraphs.
            \end{itemize}
        }
        \\
        \hline
        \parbox{3cm}{
        \textbf{Curve classes}
        }
        &\multicolumn{2}{l|}{
        \parbox{8cm}{
        \begin{itemize}[leftmargin=*]
            \item Kernel of $\mathbb{Z}^{E(G)}\rightarrow H_2(X)$.
            \item Enumeration of multiplicities $E(G)\rightarrow \mathbb{N}_{\ge 0}$ giving a prescribed curve class.
        \end{itemize}
        }}
        \\
        \hline
        \parbox{3cm}{\textbf{Equivariant\,\,\,\, GW invariants}}
        &
        \multicolumn{2}{l|}{
        \parbox{8cm}{
        \begin{itemize}[leftmargin=*]
            \item Calculation from the GKM graph for any $g\ge 0$.
            \item Including $\ev^*(\cdot)$ and $\psi$-classes.
        \end{itemize}
        }}
        \\
        \hline
        \parbox{3cm}{
        \textbf{Equivariant\,\,\,\,  Quantum\,\,\,\,\,\,\,\,\,\,\,\,\,\,  Cohomology}
        }
        &
        \multicolumn{2}{l|}{
        \parbox{8cm}{
        \begin{itemize}[leftmargin=*]
            \item Calculation of $\ast_T$ structure constants in given curve class.
            \item Possible for all relevant curve classes if ${\mathcal{C}_1(e)>0}\,\,\,\forall e\in E(G)$.
            \item Arithmetic with elements of $QH_T^*(X)$.
        \end{itemize}
        }
        }
        \\
        \hline
    \end{tabular}
    \vspace{2mm}
    \caption{Scope of {\pkg} at the time of writing. (1.) For performance reasons, (partial) flag varieties are implemented separately, although they are of course type $A_n$ homogeneous spaces.}
    \label{tab:package_features}
\end{table}

\begin{rem}
    The current stable release of {\pkg} is version~\pkgversion. Its range of applications is expected to expand in future developments. For an up-to-date and comprehensive overview of the package’s capabilities, we refer the reader to the online documentation.
\end{rem}

\section{Applications and examples}
\label{sec:applications_and_examples}

In the remaining part of the paper, we provide some interesting examples that were calculated using the tool from Section~\ref{sec:tool},
as well as proofs of patterns that we identified.
We present applications to Hamiltonian realizability in GKM theory, enumeration of plane cubics in $\mathbb{P}^4$, and equivariant quantum Schubert calculus for smooth Schubert varieties.

\subsection{Calabi--Yau local models for \texorpdfstring{$\mathbb P^1$}{P1}}
\label{sec:CY_local_models}

A particularly beautiful scenario for computing Gromov--Witten invariants is that of Calabi--Yau threefolds.
In this case, we have
\[
\text{vdim}(\overline{\mathcal{M}}_{g,n}(X;\beta)) = n
\]
so working with $n=0$ marked points implies that $GW_{g,0}^{X,\beta}\in\mathbb{Q}$.
Hence, this should naively be a \emph{``count of genus $g$ curves on $X$ in curve class $\beta$"}.
It is a classical question to ask how
$$GW_{g,0}^{X,\beta}=\int^T_{[\overline{\mathcal{M}}_{g,0}(X;\beta)]^\text{vir}}1$$
relates to the actual number of such curves \cite[\S7.4]{Cox_Katz_1999}.
An important special case is to consider the contribution of an \emph{isolated}, \emph{rigid}, or \emph{super-rigid} curve (see e.g. \cite{MR1729095_Pandharipande_1999}).

\subsubsection{Background}

Using string theory, Gopakumar and Vafa arrived at the formula
\begin{align*}
    \sum_{\beta\neq 0, g\ge 0} GW_{g,0}^{X,\beta} t^{2g-2}q^\beta
    =
    \sum_{\beta\neq 0, h\ge 0}n_{h,\beta}(X)\sum_{k=1}^\infty \frac{1}{k}\left(2\sin \left(\frac{kt}{2}\right)\right)^{2h-2} q^{k\beta}
\end{align*}
and conjectured that the \emph{BPS-invariants} $n_{h,\beta}(X)$ defined by this equation should be integers (see \cite[\S34]{Hori_Katz_et_al_2003} and references therein).
Furthermore, for any $\beta$ there should exists $h_\beta\in\mathbb{Z}$ such that for all $h\ge h_\beta$, $n_{\beta,h}(X)=0$.
Restricting to the genus zero part gives
\begin{align}
    \label{eqn:GV_genus_zero}
    GW_{0,0}^{X,\beta} = \sum_{\beta = d\gamma} \frac{n_{0,\gamma}(X)}{d^3}.
\end{align}

It is tempting to interpret 
$n_{0,\beta}(X)$ as the number of genus $0$ curves in class $\beta$.
In particular, the degree $d$ covers of a $\mathbb{P}^1$ in class $\beta$ should contribute $1/d^3$ to $GW_{0,0}^{X,d\beta}$.
While this interpretation is false in general (see \cite[\S7.4.3]{Cox_Katz_1999} and \cite[\S29.1.2]{Hori_Katz_et_al_2003}), it is correct for certain spaces like $\mathcal{O}_{\mathbb{P}^1}(-1)\oplus\mathcal{O}_{\mathbb{P}^1}(-1)\rightarrow\mathbb{P}^1$, which is the local model for a rigid $\mathbb{P}^1$ in a Calabi--Yau threefold (see \cite[\S27.5]{Hori_Katz_et_al_2003} and references therein).

Interesting local models to study in this context are rank 2 vector bundles ${L_1\oplus L_2\rightarrow C}$ over a curve $C$.
In this case, Bryan and Pandharipande worked out the theory of \emph{disconnected} equivariant Gromov--Witten invariants, where the domain of a stable map may be disconnected \cite{Bryan_Pandharipande_2008}.
They consider a $T$-action on the total space that is trivial on the base curve $C$.

For symplectic Calabi--Yau $6$-manifolds, the Gopakumar--Vafa conjectures were proved in \cite{MR3739228,doan}.

\begin{rem}\label{rem:BP08}
    In principle, one could try to deduce the results in Section~\ref{sec:CY_local_models} by specializing the generating function in \cite[Corollary~7.2]{Bryan_Pandharipande_2008} appropriately, taking its logarithm, and dealing with the resulting partition combinatorics.
    On the other hand, our proofs do not rely on \cite{Bryan_Pandharipande_2008} and are based directly on the localization formula (Theorem~\ref{thm:LS17}).
\end{rem}

\subsubsection{Our setting}
\label{sec:our_local_setting}

We shall work with $T$-equivariant vector bundles $X\rightarrow \mathbb{P}^1$.
Throughout, we assume that the $T$-action on $\mathbb{P}^1$ has precisely two fixed points.
Our setting differs from that considered in \cite{Bryan_Pandharipande_2008} because we have a non-trivial $T$-action on the base and work with \emph{connected} Gromov--Witten invariants.
The class of the zero section will be denoted by $\mathbf{0}_X$.


\begin{defn}\label{def:Calabi}
    Let $X$ be the total space of the $T$-linearized rank $2$ vector bundle $\mathcal{O}_{\mathbb{P}^1}(a_1)\oplus\mathcal{O}_{\mathbb{P}^1}(a_2)\rightarrow \mathbb{P}^1$.
    We say that $X$ is:
    \begin{itemize}
        \item \emph{Calabi--Yau} if $c_1(T_X)=0$,
        \item \emph{equivariantly Calabi--Yau} if $c_1^T(T_X)=0$.
    \end{itemize}
\end{defn}
Equivalently, $X$ is Calabi--Yau if $\det(\Omega_X)=0$, or if $a_1+a_2=-2$. On the other hand, equivariantly Calabi--Yau implies the Calabi--Yau property and is equivalent to the sum of all tangent weights being zero at both fixed points of $\mathbb{P}^1$. 

From now on, we denote by $X_k$ for $k\ge 0$ the $T$-linearized Calabi--Yau rank 2 vector bundle $\mathcal{O}_{\mathbb{P}^1}(k-1)\oplus\mathcal{O}_{\mathbb{P}^1}(-k-1)$ with $T$-weights labeled as in Figure~\ref{fig:localP1}.



\begin{figure}[htb]
    \centering
    \includegraphics[width=0.5\linewidth]{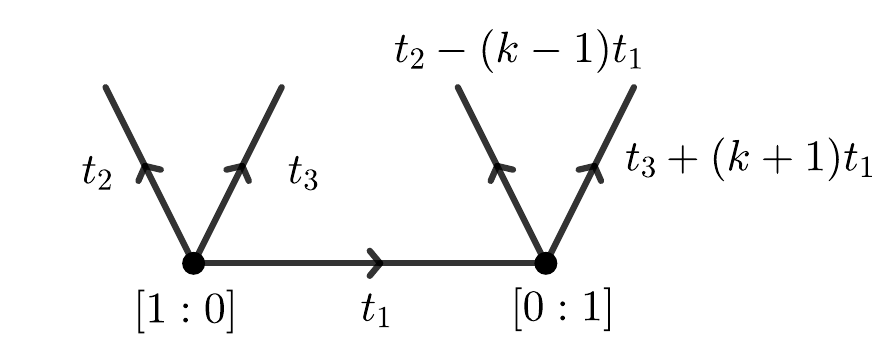}
    \caption{
    The GKM graph of $X_k$.
    Here, $t_1$ is the weight of $T_{\mathbb{P}^1,[1:0]}$, and $t_2,t_3$ are the weights of the fibers at $[1:0]$.
    The weights at $[0:1]$ are automatically determined by the degrees $a_1=k-1$ and $a_2=-k-1$ (see Section~\ref{sec:connections}).}
    \label{fig:localP1}
\end{figure}

We consider this as an algebraic non-projective GKM space whose GKM graph contains a flag for each $T$-invariant fiber (see Figure~\ref{fig:localP1}).
The same localization formula from Theorem~\ref{thm:LS17} then applies to calculate equivariant Gromov--Witten invariants of $X_k$.
However, since $X_k$ is not proper, $\overline{\mathcal{M}}_{g,n}(X_k;\beta)$ is not necessarily proper and hence the equivariant integral is not a priori guaranteed to lie in $H_T^*(\text{pt};\mathbb{Q})$.
Instead, it generally lies in the fraction field of $H_T^*(\text{pt};\mathbb{Q})$, so it is a rational function in the equivariant parameters $t_1,t_2,t_3$.
By the Calabi--Yau threefold assumption, $GW_{0,0}^{X_k,\beta}$ is a rational function of homogeneous total degree zero (see Section~\ref{sec:GW}).

\begin{lem}
\label{lem:polynomiality_condition}
Let $k\ge 1$. Hence
$$
    GW_{0,0}^{X_k,\mathbf{0}_{X_k}}=
    \frac{(t_1+t_3)(2t_1+t_3)\cdots(kt_1+t_3)}{t_2(t_2-t_1)(t_2-2t_1)\cdots(t_2-(k-1)t_1)}
    =\prod_{i=1}^k\frac{it_1+t_3}{t_2 - (i-1)t_1}.
$$
If $k\ge 2$, $GW_{0,0}^{X_k,\mathbf{0}_{X_k}}$ is a constant if and only if
$$t_3 \in \{ -t_1 -t_2,  -kt_1 + t_2\}.$$
\end{lem}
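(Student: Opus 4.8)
The plan is to obtain the closed form for $GW_{0,0}^{X_k,\mathbf{0}_{X_k}}$ from the localization formula of Theorem~\ref{thm:LS17}, and then extract the polynomiality criterion from it by unique factorization. Let $e_0$ be the unique edge of the GKM graph $G$ of $X_k$. In a decorated tree $\overrightarrow{\Gamma}\in\Gamma_0(X_k,\mathbf{0}_{X_k})$ every edge of $\Gamma$ maps to $e_0$, and $\sum_{e\in E(\Gamma)}\overrightarrow{d}(e)=1$ because $\mathbf{0}_{X_k}=[C_{e_0}]$ generates $H_2(X_k;\mathbb{Z})\cong\mathbb{Z}$; hence $\Gamma$ consists of a single edge joining two vertices, mapped to the two ends $[1:0]$ and $[0:1]$ of $e_0$. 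This $\overrightarrow{\Gamma}$ is unique and has trivial automorphism group, so the sum in Theorem~\ref{thm:LS17} (applicable here as discussed in \S\ref{sec:our_local_setting}, with the answer a priori only in the fraction field of $H_T^*$) collapses to one term. Each vertex has valence one and no markings, so it contributes the edge-weight $t_1$ at $[1:0]$ and $-t_1$ at $[0:1]$, while the edge contributes $h(e_0,1)$ from \eqref{eq:h} with $m=\dim X_k=3$. I would then expand the two $b$-factors of $h(e_0,1)$: reading off Figure~\ref{fig:localP1}, the fiber summands $\mathcal{O}_{\mathbb{P}^1}(k-1)$ and $\mathcal{O}_{\mathbb{P}^1}(-k-1)$ give $b(t_1,t_2,k-1)=\prod_{j=0}^{k-1}(t_2-jt_1)^{-1}$ (the case $a\ge 0$, valid since $k\ge 1$) and $b(t_1,t_3,-k-1)=\prod_{j=1}^{k}(t_3+jt_1)$ (the case $a<0$). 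Assembling the prefactor, the two vertex weights, and these $b$-factors, the powers of $t_1$ cancel and one is left with $\prod_{i=1}^k(it_1+t_3)/(t_2-(i-1)t_1)$, as claimed.

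For the ``if'' direction of the polynomiality statement I would substitute directly: with $t_3=-t_1-t_2$ the $i$-th numerator factor becomes $-(t_2-(i-1)t_1)$, so the product equals the constant $(-1)^k$; with $t_3=-kt_1+t_2$ it becomes $t_2-(k-i)t_1$, and reindexing $i\mapsto k-i$ identifies the numerator with the denominator, so the product equals $1$. For the ``only if'' direction, I would note that the numerator and denominator of the closed form are products of $k$ linear forms in $\mathbb{Q}[t_1,\dots,t_r]$, both homogeneous of degree $k$, so the quotient lies in $\mathbb{Q}$ iff the numerator is a scalar multiple of the denominator. The GKM hypotheses---$t_1,t_3$ independent and $t_1,t_2$ independent---ensure that none of these linear forms vanishes and that the $k$ factors of the numerator (resp.\ of the denominator) are pairwise non-proportional. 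Unique factorization in $\mathbb{Q}[t_1,\dots,t_r]$ then yields a bijection $\sigma$ of $\{1,\dots,k\}$ and units $u_i\in\mathbb{Q}^{\times}$ with $it_1+t_3=u_i\bigl(t_2-(\sigma(i)-1)t_1\bigr)$ for all $i$. Comparing $t_2$-coefficients (again using independence of $t_1,t_2$) shows $u_i$ is a constant $u$, so $\sigma(i)=1+(C-i)/u$ is an affine function of $i$ for some constant $C$; since a bijection of $\{1,\dots,k\}$ with $k\ge 2$ whose values form an arithmetic progression must have common difference $\pm1$, we get $u=\pm1$. Solving the two cases gives $u=-1$, $\sigma=\mathrm{id}$, $C=1$, hence $t_3=-t_1-t_2$; and $u=1$, $\sigma(i)=k+1-i$, $C=k$, hence $t_3=-kt_1+t_2$.

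I expect the main obstacle to be the ``only if'' direction---not because it is deep, but because the argument succeeds only if the GKM linear-independence conditions are invoked at exactly the right points: they are what forbids the spurious proportionalities among the linear factors and what rigidifies the bijection $\sigma$. A secondary point of care is the bookkeeping in the first step, namely which fiber summand of $T_{X_k}$ contributes which $b$-factor and sign. The hypothesis $k\ge 2$ is genuinely needed in the ``only if'' step: for $k=1$ the single numerator factor $t_1+t_3$ may be any scalar multiple of $t_2$, giving a one-parameter family of linearizations with constant invariant, so the clean dichotomy fails.
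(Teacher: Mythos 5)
Your proof is correct and follows essentially the same route as the paper's: derive the closed form from the localization formula for the unique single-edge decorated tree, verify the two special values of $t_3$ by direct substitution, and for the converse match the linear factors of the numerator against those of the denominator (your UFD/bijection argument is the same step the paper phrases as the set equality $\{i+a : 1\le i\le k\}=\{0,-b,\dots,(1-k)b\}$ after setting $t_1=0$ to identify the constant). You additionally spell out two points the paper leaves implicit, namely the explicit evaluation of $h(e_0,1)$ and the vertex factors, and the fact that constancy forces $t_3\in\mathrm{span}(t_1,t_2)$ rather than assuming $t_3=at_1+bt_2$ at the outset.
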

\begin{proof}
    The expression for the Gromov--Witten invariant follows by applying the localization formula of Theorem~\ref{thm:LS17} to the GKM graph of $X_k$ given in Figure~\ref{fig:localP1}.

    Let $t_3=at_1+bt_2$. Since $X_k$ is $2$-independent, we must have $a\neq 0\neq b$. If the expression
    \[GW_{0,0}^{X_k,\mathbf{0}_{X_k}}=\prod_{i=1}^k\frac{(i+a)t_1+bt_2}{t_2 - (i-1)t_1}\]
    is constant then it must be equal to $b^k$, simply by taking $t_1=0$. Thus, dividing by $b^k$ we have
    \[\prod_{i=1}^k ((i+a)t_1+bt_2) = \prod_{i=1}^k (bt_2 +b (1-i)t_1).\]
    This is possible if and only if
    $$\{i+a:1\le i \le k\} = \{0, -b, -2b, \ldots,(1-k)b\}.$$
    Clearly, $-k\le a \le -1$ and $b=\pm 1$. It can easily be seen that the only two possible solutions are $a=b=-1$ and $a=-k, b=1$. 
\end{proof}

Thus, for $k\ge 1$ we have the following cases in which we can hope for polynomial Gromov--Witten invariants of $X_k$:
\begin{equation}\label{eqn:polynomial_cases}
    \begin{cases}
        t_3 = -t_1 - t_2 & \Longleftrightarrow\,\,\,\,  c_1^T(T_{X_k})=0
        \\
        t_3 = -kt_1+t_2 & \Longleftrightarrow\,\,\,\,  c_1^T(T_{X_k})=(1-k)t_1 + 2t_2
        \\
        t_3=-t_1+yt_2 & \Longleftrightarrow\,\,\,\,  c_1^T(T_{X_1}) = (1+y)t_2 \hspace{3mm} 
        (k=1\text{, any }y\in\mathbb{Z}\setminus\{0\})
    \end{cases}
\end{equation}
The first case is the \emph{equivariantly Calabi--Yau} case (see Definition~\ref{def:Calabi}).


\begin{lem}\label{lem:polarity}
    The $T$-equivariant Gromov--Witten invariant $GW_{0,0}^{X_k,d\mathbf{0}_{X_k}}$
    does not have $t_1$ as factor in its denominator.
\end{lem}

\begin{proof}
    Let $T'\subset T$ be the subtorus with Lie algebra $\ker(t_1)\subset\mathfrak{t}$.
    Then $T'$ acts trivially on the zero section of $X_k$ and scales the summands $\mathcal{O}_{\mathbb{P}^1}(k-1),\mathcal{O}_{\mathbb{P}^1}(-k-1)$ by weights $t_2|_{\ker(t_1)}, t_3|_{\ker(t_1)}$, respectively.
    By the 2-independence of GKM graphs, $t_2|_{\ker(t_1)},t_3|_{\ker(t_1)}\neq 0$ and hence
    $X_k^{T'}$ is precisely the zero section.
    Thus, the $T'$-equivariant Gromov--Witten invariant $GW_{0,0}^{X_k,d\mathbf{0}_{X_k}}$ is well-defined by virtual localization (cf. \cite[\S3.5.2]{LS17}).
    Let $I_T$ (resp., $I_{T'}$) be the $T$-equivariant (resp., $T'$-equivariant) Gromov--Witten invariant $GW_{0,0}^{X_k,d\mathbf{0}_{X_k}}$.
    The forgetful map $H_T^*\rightarrow H^*_{T'}$ has as kernel the linear span of $t_1$ and sends $I_T$ to $T_{T'}$.
    Thus, $I_T$ cannot have $t_1$ as factor in the denominator.
    
\end{proof}

The following theorem shows that all cases from \eqref{eqn:polynomial_cases} indeed give rise to Gromov--Witten invariants that are constant in the equivariant parameters.
Note that for $k=0$ we have $GW_{0,0}^{X_k,d\mathbf{0}_{X_k}}=1/d^3$  (see \cite{Manin_1995} and \cite[\S27.5]{Hori_Katz_et_al_2003}), so we focus on the case $k>0$.
A completely different proof of Theorem~\ref{thm:gw_equivariant_CY}~(1) below appeared independently in \cite[Theorem~3.2]{vangarrel2025}, as we explained in the Introduction.

\begin{thm}
\label{thm:gw_equivariant_CY}
    Let $k>0$.
    \begin{enumerate}
        \item If $X_k$ is equivariantly Calabi--Yau, i.e. $t_3=-t_1-t_2$, then
        \begin{equation}\label{eq:GW_thm_1}
            GW_{0,0}^{X_k,d\mathbf{0}_{X_k}} = \frac{(-1)^{d(k+1)-1}}       {d^3k^2}\binom{k^2 d}{d}.
        \end{equation}
            \label{item:GW_thm_1}
        \item  
        If $t_3 = t_2- kt_1$, then
             \[
            GW_{0,0}^{X_k,d\mathbf{0}_{X_k}} = \frac{1}{d^3}.
            \]
            \label{item:GW_thm_2}

        \item If $k=1$ and $t_3=-t_1+yt_2$ for any non-zero integer $y$, then
            \[
            GW_{0,0}^{X_k,d\mathbf{0}_{X_k}} = \frac{y}{d^3}.
            \]
            \label{item:GW_thm_3}
    \end{enumerate}
\end{thm}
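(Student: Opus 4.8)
The plan is to evaluate directly the localization formula of Theorem~\ref{thm:LS17}, which applies to the non-projective algebraic GKM space $X_k$ (and, via Theorem~\ref{thm:Hirschi}, is connection-independent by Theorem~\ref{thm:con_indep}), on the GKM graph of $X_k$ drawn in Figure~\ref{fig:localP1}. That graph has the two fixed points $[1:0],[0:1]$ of $\mathbb{P}^1$ as vertices, one compact edge $e=\{[1:0],[0:1]\}$ with $[C_e]=\mathbf{0}_{X_k}$, and two non-compact fibre-flags at each vertex. Since $H_2(X_k)\cong\mathbb{Z}$ is generated by $[C_e]$ and there are no other $T$-stable $\mathbb{P}^1$'s, every decorated tree $\overrightarrow{\Gamma}\in\Gamma_0(X_k,d\mathbf{0}_{X_k})$ maps entirely onto $e$; such a $\overrightarrow{\Gamma}$ is a tree whose vertices alternate between $[1:0]$ and $[0:1]$, carrying a degree function $\overrightarrow{d}\colon E(\Gamma)\to\mathbb{Z}_{>0}$ with $\sum_{e'\in E(\Gamma)}\overrightarrow{d}(e')=d$ and no markings. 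With $n=0$ the vertex factors of Theorem~\ref{thm:LS17} simplify, so $GW_{0,0}^{X_k,d\mathbf{0}_{X_k}}$ becomes an explicit finite sum, indexed by isomorphism classes of such trees, of products of the factors $h(\overrightarrow{f}(e'),\overrightarrow{d}(e'))$ (evaluated from Figure~\ref{fig:localP1} with $a_1=k-1$, $a_2=-k-1$) times the surviving weight factors; a priori this is a rational function of $t_1,t_2,t_3$.

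Next I would use the structural facts already established to tame the sum. The invariant is homogeneous of degree $0$ in $(t_1,t_2,t_3)$ (Section~\ref{sec:GW}), and by Lemma~\ref{lem:polarity} it has no pole along $t_1=0$; Lemma~\ref{lem:polynomiality_condition} supplies the case $d=1$ and pins down exactly the three substitutions in \eqref{eqn:polynomial_cases} as the only ones under which polynomiality can occur. After imposing the case's relation on $t_3$, one verifies that the apparent poles of the specialized sum all cancel — the locus $t_1=0$ being handled by Lemma~\ref{lem:polarity} — so that degree-$0$ homogeneity forces the invariant to be a constant, which may then be evaluated at any convenient specialization, e.g. $t_1=0$ (which trivialises the $T$-action on the zero section). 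Equivalently, and more cleanly, after the substitution one recognises the sum as a twisted Hurwitz-type integral $\int_{\overline{\mathcal{M}}_{0,0}(\mathbb{P}^1,d)} e\big(R^1\pi_* f^*\mathcal{O}(-k-1)\big)\big/ e\big(R^0\pi_* f^*\mathcal{O}(k-1)\big)$ over $\overline{\mathcal{M}}_{0,0}(\mathbb{P}^1,d)$, the Euler classes carrying the $T$-weights prescribed by the case — this is the viewpoint, via localization with respect to the codimension-one subtorus fixing the zero section, already used in the proof of Lemma~\ref{lem:polarity}.

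From here the three cases diverge. In cases (2) and (3) the resulting expression degenerates: numerator and denominator of the twisted integral collapse (in case (3) the summand $\mathcal{O}(k-1)=\mathcal{O}$ splits off an $\mathbb{A}^1$-factor whose only effect is the multiplicative weight $yt_2$), and one recovers the classical Aspinwall--Morrison multiple-cover number $GW_{0,0}^{X_k,d\mathbf{0}_{X_k}}=1/d^3$, respectively $y/d^3$, by comparison with the known answer for $X_0=\mathcal{O}(-1)\oplus\mathcal{O}(-1)\to\mathbb{P}^1$ (cf.\ \cite{Manin_1995} and \cite[\S27.5]{Hori_Katz_et_al_2003}). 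In case (1) the sum does not collapse, and the content of the theorem is the evaluation
\[
GW_{0,0}^{X_k,d\mathbf{0}_{X_k}}=\frac{(-1)^{d(k+1)-1}}{d^3k^2}\binom{k^2d}{d}.
\]
I would obtain this either by a generating-function argument — packaging the tree sum into a fixed-point series and applying Lagrange inversion, with the hypergeometric shape of the $h$-factors producing the binomial coefficient — or, as a consistency check, by matching the independent computations via log Gromov--Witten degenerations in \cite[Theorem~3.2]{vangarrel2025} and via the disconnected series of \cite[Corollary~7.2]{Bryan_Pandharipande_2008} (Remark~\ref{rem:BP08}).

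The main obstacle is precisely this last identity in case (1): extracting the closed binomial form from the tree sum weighted by the $h$-factors, i.e. proving the combinatorial identity that packages the sum over bipartite trees into $\frac{(-1)^{d(k+1)-1}}{d^3k^2}\binom{k^2d}{d}$. Everything else — the reduction to the twisted integral, the verification that the specialization is constant, and the degenerate evaluations in cases (2) and (3) — is bookkeeping or a routine limiting case of that computation, and the $d=1$ value from Lemma~\ref{lem:polynomiality_condition} together with the independent proofs cited above provides a strong check on the final formula.
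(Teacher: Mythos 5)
Your overall strategy coincides with the paper's: apply the localization formula of Theorem~\ref{thm:LS17} to the graph of Figure~\ref{fig:localP1}, argue that under each of the three substitutions the invariant is a constant (using degree-zero homogeneity plus Lemma~\ref{lem:polarity}), and then evaluate that constant. However, there are two genuine gaps.

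First, the constancy argument and the evaluation are not actually carried out, and the specialization you suggest is the wrong one. The paper shows by direct computation that in each case $h(e,m)$ has denominator a pure power of $t_1$, so the invariant lies in $\mathbb{Q}[t_2/t_1]$ and Lemma~\ref{lem:polarity} then forces constancy; your ``one verifies that the apparent poles cancel'' is precisely this computation and cannot be skipped. Moreover, you cannot evaluate the constant at $t_1=0$: every individual term of the localization sum has $t_1^{2d}$ in its denominator, so the sum is singular termwise there. The paper instead sets $t_2=0$, $t_1=1$, which kills $\alpha([1:0])=t_1t_2t_3$ and hence annihilates every decorated tree having a vertex over $[1:0]$ of valency greater than one. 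This is what reduces the sum from all bipartite trees to star-like trees indexed by partitions of $d$ --- without that reduction the sum is intractable, and your proposal never identifies it.

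Second, and more importantly, the heart of case~(1) --- which you correctly flag as ``the main obstacle'' --- is left unproven. After the reduction to partitions, the paper must establish the identity
\[
\sum_{(a_1,\ldots,a_d)}\frac{1}{a_1!\cdots a_d!}\prod_{j=1}^d \left(\frac{t}{j}\binom{kj}{j}\right)^{a_j}
=\frac{t}{d+t}\binom{k(d+t)}{d},
\]
which it does in Lemma~\ref{Lemma:Eisenkoelbl} via the exponential formula and the Fuss--Catalan generating function $G_k(x)=\exp\bigl(\sum_{j\ge1}\frac{1}{kj}\binom{kj}{j}x^j\bigr)$, showing $F(x)=G_k(x)^{kt}$ and extracting coefficients by induction. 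Your proposed ``Lagrange inversion / generating-function argument'' is indeed the right tool and is essentially what the paper does, but naming the tool is not proving the identity; and ``matching the independent computations'' of \cite{vangarrel2025} and \cite{Bryan_Pandharipande_2008} is a consistency check, not a proof. Note also that case~(2) is not a degenerate collapse in the paper: it requires the same partition identity with $t=(1-k)d/k$, so it cannot simply be reduced to the $X_0$ multiple-cover formula as you suggest. Only case~(3) genuinely collapses (there $\alpha([0:1])=0$ as well, so a single decorated graph contributes).
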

\begin{proof}
    Our strategy is to understand how the formula in Theorem~\ref{thm:LS17} behave when applied to our setting.  The GKM graph of $X_k$ is shown in Figure~\ref{fig:localP1}. 
    Let $e$ be the edge of $X_k$. By Equation~\eqref{eq:h}, for any $m\ge 1$ we have
    \begin{equation}\label{eq:h_in_5.3}
        h(e,m) = \frac{(-1)^m m^{2m}}{(m!)^2 t_1^{2m}}\cdot b(t_1/m, t_2, m(k-1)) \cdot b(t_1/m, t_3, m(-k-1)),
    \end{equation}
    where
    \begin{align*}
        b(t_1/m, t_2, m(k-1)) &= \prod_{j=0}^{m(k-1)}\left(t_2 - \frac{j}{m}t_1\right)^{-1} \\
        &= \prod_{j=m}^{mk}\left(t_2 - \left(\frac{j}{m}-1\right) t_1\right)^{-1},
    \end{align*}
    and
    \begin{align}\label{eqn:b_a2}
        b(t_1/m, t_3, m(-k-1))
        &=
        \prod_{j=1}^{m(k+1)-1}\left(t_3 + \frac{j}{m}t_1\right).
    \end{align} 
    In case \eqref{item:GW_thm_1}, that is $t_3=-t_1-t_2$, an easy refactoring shows that the right hand side of Equation~\eqref{eqn:b_a2} is equal to
    $$(-1)^{m(k+1)-1}\prod_{j=1}^{mk}\left(t_2 - \left(\frac{j}{m}-1\right)t_1\right)
    \prod_{j=1}^{m-1}\left(t_2 - \left(k+\frac{j}{m}-1\right)t_1\right).$$
    From Equation~\eqref{eq:h_in_5.3} it is not difficult to deduce that 
    \begin{equation*}
        h(e,m) = \frac{(-1)^{mk-1} m^{2m}}{(m!)^2 t_1^{2m}}\prod_{j=1}^{m-1}\left(t_2 + \frac{j}{m}t_1\right)\left(t_2 + \left(\frac{j}{m}-k\right)t_1\right).
    \end{equation*}
    We can compute $h(e, m)$ also in the remaining two cases, arguing in the same way. The results for cases \eqref{item:GW_thm_2} and \eqref{item:GW_thm_3} are, respectively,
    \begin{align*}
        h(e,m) &= \frac{(-1)^{m} m^{2m}}{(m!)^2 t_1^{2m}}\prod_{j=1}^{m-1}\left(t_2 + \frac{j}{m}t_1\right)\left(t_2 + \left(\frac{j}{m}-k\right)t_1\right)\\
        h(e,m) &= \frac{(-1)^{m} m^{2m}}{(m!)^2 t_1^{2m}} y \prod_{j=1}^{m-1} 
        \left(yt_2 + \frac{j}{m}t_1\right)
        \left(yt_2 + \left(\frac{j}{m}-1\right)t_1\right).
    \end{align*}
    From the formulas above, we proved that, in the three cases, $h(e,m)$ is the quotient of two polynomials where the denominator is a power of $t_1$. Moreover, we have
    $$\alpha_{(\eta,v)}^{-1}=-\alpha_{(\eta,v_0)}^{-1}=\frac{m}{t_1}\,\,\,\,\,\forall \eta=\{v_0,v\} \in E(\Gamma),\overrightarrow{f}(v)=[1:0].$$
    In particular, since $GW_{0,0}^{X_k,d\mathbf{0}_{X_k}}$ is the quotient of two polynomials of the same degree (see Section~\ref{sec:GW}), $GW_{0,0}^{X_k,d\mathbf{0}_{X_k}}\in\mathbb{Q}[t_2/t_1]$, that is, it 
    is a polynomial in $t_2/t_1$. 

    By Lemma~\ref{lem:polarity}, $GW_{0,0}^{X_k,d\mathbf{0}_{X_k}}$ is constant. We can compute it by setting $t_2=0$, $t_1=1$. Let $v$ be a vertex of a decorated tree. From
    \begin{align*}
        \alpha([1:0]) &= t_1t_2t_3 \\
        \alpha([0:1]) &= -t_1(t_2-(k-1)t_1)(t_3+(k+1)t_1), 
    \end{align*}
    it follows that $\alpha(\overrightarrow{f}(v)) = 0$ if $v$ is mapped to $[1:0]$. The fact that $\alpha(\overrightarrow{f}(v))$ appears with exponent $\mathrm{val}(v)-1$ in the $GW$ formula, implies that the contribution from a decorated tree is zero when it has a vertex mapped to $[1:0]$ with valency strictly greater than one. Hence the contributions to $GW_{0,0}^{X_k,d\mathbf{0}_{X_k}}$ come only from star-like trees having the central vertex mapped to $[0:1]$ and all other vertices mapped to $[1:0]$. 
    
    The set of all such decorated trees is in bijective correspondence with the set of partitions of $d$. We denote the partitions of $d$ by $d$-vectors $(a_1,\ldots, a_d)$ of non-negative integers such that $\sum_{j=1}^d ja_j=d$. The bijection is realized in the following way: the partition $(a_1,\ldots, a_d)$ corresponds to the tree having exactly $a_j$ edges of degree $j$, for $1\le j\le d$. This decorated graph has $a_1!a_2!\cdots a_d!$ automorphisms.

    The formula in Theorem~\ref{thm:LS17} can be rewritten in terms of partitions of $d$. The contribution of each star-like graph with central vertex $v_0$ is 
    \begin{equation}
        \label{eq:contribution}
        \frac{1}{a_1!\cdots a_d!}
    \frac{\alpha([0:1])^{\mathrm{val}(v_0)}}{\alpha([0:1])}
    \frac{(-d)^{\mathrm{val}(v_0)}}{-d^{3}}
    \prod_{j=1}^d \left(-\frac{h(e,j)}{j} \right)^{a_j}.
    \end{equation}    
    If $k=1$, then $\alpha([0:1])=0$, thus Equation~\eqref{eq:contribution} is non-zero only if $\mathrm{val}(v_0)=1$. That is, the contribution comes only from the decorated graph with just one edge. This contribution is easy to compute in all three cases. So let us suppose $k\ge 2$.
    
    From $\mathrm{val}(v_0)=\sum_{j=1}^da_j$, in the first case we deduce that
    $$GW_{0,0}^{X_k,d\mathbf{0}_{X_k}} =
    \frac{-1}{d^3k(k-1)}\sum_{(a_1,\ldots,a_d)}\frac{1}{a_1!\cdots a_d!}\prod_{j=1}^d \left (\frac{(-1)^{j(k-1)}(k-1)d}{j}\binom{kj}{j}\right )^{a_j}.
    $$
    After canceling common signs and factors, Equation~\eqref{eq:GW_thm_1} holds true if and only if
    \begin{equation}\label{eq:final}
        \sum_{(a_1,\ldots,a_d)}\frac{1}{a_1!a_2!\cdots a_d!}\prod_{j=1}^d \left (\frac{(k-1)d}{j}\binom{kj}{j}\right )^{a_j}
    =
    \frac{k-1}{k}\binom{k^2d}{d}.
    \end{equation}
    This equation follows from Lemma~\ref{Lemma:Eisenkoelbl}  below by taking $t=(k-1)d$.

    In case \eqref{item:GW_thm_2}, we have
    $$GW_{0,0}^{X_k,d\mathbf{0}_{X_k}} =
    \frac{1}{d^3(1-k)}\sum_{(a_1,\ldots,a_d)}\frac{1}{a_1!a_2!\cdots a_d!}\prod_{j=1}^d \left (\frac{(1-k)d}{kj}\binom{kj}{j}\right )^{a_j}.
    $$
    By applying Lemma~\ref{Lemma:Eisenkoelbl} with $t=(1-k)d/k$, we see that this expression is $1/d^3$. This concludes the proof.
\end{proof}

\begin{cor}
    Let $k>0$. 
    The degree zero rational function $GW_{0,0}^{X_k,d\mathbf{0}_{X_k}}$ in $t_1,t_2,t_3$ simplifies to a rational number for all $d$ if and only if $GW_{0,0}^{X_k,\mathbf{0}_{X_k}}$ does.
\end{cor}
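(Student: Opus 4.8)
The plan is to reduce the statement to Lemma~\ref{lem:polynomiality_condition} together with Theorem~\ref{thm:gw_equivariant_CY}, handling $k\ge 2$ and $k=1$ separately. One implication I would dispatch immediately: since $\mathbf{0}_{X_k}=1\cdot\mathbf{0}_{X_k}$, if $GW_{0,0}^{X_k,d\mathbf{0}_{X_k}}$ is a rational number for every $d$ then in particular it is for $d=1$. For the converse, the first observation is that a homogeneous degree-zero rational function in $t_1,t_2,t_3$ lies in $\mathbb{Q}$ precisely when it is constant in the equivariant parameters; so the hypothesis to work with is that $GW_{0,0}^{X_k,\mathbf{0}_{X_k}}$ is constant, and the goal is to conclude the same for every degree $d$.

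For $k\ge 2$ I would then invoke Lemma~\ref{lem:polynomiality_condition}: constancy of $GW_{0,0}^{X_k,\mathbf{0}_{X_k}}$ forces $t_3=-t_1-t_2$ or $t_3=-kt_1+t_2$. These are exactly the hypotheses of parts \eqref{item:GW_thm_1} and \eqref{item:GW_thm_2} of Theorem~\ref{thm:gw_equivariant_CY}, which already furnish the closed forms $\frac{(-1)^{d(k+1)-1}}{d^3k^2}\binom{k^2d}{d}$ and $\frac{1}{d^3}$ for $GW_{0,0}^{X_k,d\mathbf{0}_{X_k}}$, both of which are rational numbers for all $d$. This settles the case $k\ge 2$ with no further work.

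For $k=1$, which Lemma~\ref{lem:polynomiality_condition} does not cover, I would argue directly: the explicit formula of that lemma specializes to $GW_{0,0}^{X_1,\mathbf{0}_{X_1}}=(t_1+t_3)/t_2$, which is constant exactly when $t_1+t_3$ is a scalar multiple $y$ of $t_2$, i.e.\ $t_3=-t_1+yt_2$, with $y\neq 0$ since $2$-independence of the GKM graph forbids $t_3$ from being proportional to $t_1$. Then Theorem~\ref{thm:gw_equivariant_CY}\,\eqref{item:GW_thm_3}, whose argument depends only on the relation $t_3=-t_1+yt_2$, gives $GW_{0,0}^{X_1,d\mathbf{0}_{X_1}}=y/d^3\in\mathbb{Q}$ for all $d$. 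I expect the only mildly delicate point to be precisely this $k=1$ bookkeeping: one must confirm that the scalar $y$ appearing in the constancy condition coincides with the parameter $y$ of Theorem~\ref{thm:gw_equivariant_CY}\,\eqref{item:GW_thm_3}, and that the argument there is insensitive to whether $y$ is an integer or merely a rational number (rationality being all that the present statement requires). Beyond this, the corollary is essentially a repackaging of the two cited results and presents no substantive obstacle.
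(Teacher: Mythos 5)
Your proposal is correct and coincides with the paper's intended argument: the corollary is stated without an explicit proof precisely because it is the immediate combination of Lemma~\ref{lem:polynomiality_condition} (which pins down the admissible $t_3$ when the $d=1$ invariant is constant) with the closed forms of Theorem~\ref{thm:gw_equivariant_CY}, plus the trivial $d=1$ specialization for the converse. Your separate treatment of $k=1$ via the explicit formula $(t_1+t_3)/t_2$ and case~\eqref{item:GW_thm_3} is exactly the bookkeeping the paper's case list \eqref{eqn:polynomial_cases} encodes.
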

Theresia Eisenk{\"o}lbl conjectured the following lemma in the course of a discussion on Equation~\eqref{eq:final}.
\begin{lem}\label{Lemma:Eisenkoelbl}
        Let $d$ and $k$ be positive integers. Let us denote by $(a_1,\ldots, a_d)$ any $d$-vector of non-negative integers such that $\sum_{j=1}^d ja_j=d$. Then the following formula holds in $\mathbb{Q}[t]$
        \begin{equation}
            \label{eq:Eisenk}
    \sum_{(a_1,\ldots, a_d)}\frac{1}{a_1!a_2!\cdots a_d!}\prod_{j=1}^d \left (\frac{t}{j}\binom{kj}{j}\right )^{a_j}
    =
    \frac{t}{d+t}\binom{k(d+t)}{d}.
        \end{equation}
\end{lem}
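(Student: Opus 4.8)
The plan is to recognize the left-hand side of \eqref{eq:Eisenk} as a coefficient extraction from an explicit generating function and to identify that function with a power of the generalized binomial series. First I would apply the exponential formula: expanding $\exp\!\big(t\,G(x)\big)=\prod_{j\ge 1}\exp\!\big(\frac{t}{j}\binom{kj}{j}x^{j}\big)$ as a product of exponential series and reading off the coefficient of $x^{d}$ gives
\[
\sum_{(a_1,\ldots,a_d)}\frac{1}{a_1!\cdots a_d!}\prod_{j=1}^{d}\left(\frac{t}{j}\binom{kj}{j}\right)^{a_j}
= [x^{d}]\,\exp\!\big(t\,G(x)\big),
\]
where $G(x):=\sum_{j\ge 1}\frac{1}{j}\binom{kj}{j}x^{j}$ and the sum ranges over all $d$-vectors $(a_1,\ldots,a_d)$ of nonnegative integers with $\sum_{j}ja_j=d$. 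This coefficient is a polynomial in $t$ of degree exactly $d$, and the right-hand side of \eqref{eq:Eisenk} is visibly a polynomial in $t$ of the same degree, so it suffices to prove \eqref{eq:Eisenk} for every positive integer $t$; this lets me treat $B^{kt}$ below as an honest power of a power series rather than manipulate $t$ as a formal exponent.

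Next I would bring in the generalized binomial series $B=B(x)\in\mathbb{Q}[[x]]$, characterized by $B(0)=1$ and $B=1+xB^{k}$. Setting $C:=B-1$, so that $C=x(1+C)^{k}$, and invoking the Lagrange inversion formula in the form $[x^{n}]\phi(C)=\frac{1}{n}[u^{n-1}]\phi'(u)(1+u)^{kn}$, applied once to $\phi(u)=\log(1+u)$ and once to $\phi(u)=(1+u)^{r}$, yields the two classical identities
\[
\log B(x)=\sum_{n\ge 1}\frac{1}{kn}\binom{kn}{n}x^{n},
\qquad
B(x)^{r}=\sum_{n\ge 0}\frac{r}{kn+r}\binom{kn+r}{n}x^{n}.
\]
The first identity gives $G(x)=k\log B(x)$, hence $\exp\!\big(t\,G(x)\big)=B(x)^{kt}$; extracting the $x^{d}$-coefficient from the second identity with $r=kt$ then gives
\[
[x^{d}]\,\exp\!\big(t\,G(x)\big)=[x^{d}]\,B(x)^{kt}=\frac{kt}{kd+kt}\binom{kd+kt}{d}=\frac{t}{d+t}\binom{k(d+t)}{d},
\]
which is precisely the right-hand side of \eqref{eq:Eisenk}.

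I do not anticipate a genuine obstacle: once the two Lagrange-inversion identities above are in place the lemma is immediate, and both are routine. The only point needing a little care is the status of $t$ as a formal parameter, which I handle by the polynomiality reduction to $t\in\mathbb{Z}_{>0}$ noted above; alternatively one could work throughout in $\mathbb{Q}[t][[x]]$, interpreting $B^{kt}$ as $\exp(kt\log B)$ and $\binom{kn+r}{n}$ as a polynomial in $r$. As an aside, a purely combinatorial proof is also available, interpreting $[x^{d}]B(x)^{kt}$ as a count of $kt$-tuples of plane $k$-ary forests on $d$ nodes via the cycle lemma and matching it against the forest interpretation of the left-hand side; but the generating-function route is shorter.
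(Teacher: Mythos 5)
Your proof is correct and follows essentially the same route as the paper's: reduce to positive integer $t$ by polynomiality, recognize the left-hand side as $[x^d]\exp(tG(x))$ via the exponential/product formula, identify $\exp(tG(x))$ with the $kt$-th power of the Fuss--Catalan generating series, and extract the coefficient. The only difference is that you derive the two key identities ($G=k\log B$ and the expansion of $B^r$) from Lagrange inversion, where the paper cites \cite[Theorem~8]{Jansen_Kolesnikov_2024} for the first and proves the second by induction on the exponent using a convolution identity from \cite{MR3077152}.
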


\begin{proof}
    Both sides of the equation are clearly in $\mathbb{Q}[t]$. Let us suppose that $t$ is a positive integer. 
    Let $C_{k,j}$ be the Fuss--Catalan number
    $$C_{k,j} = \frac{1}{(k-1)j+1}\binom{kj}{j},$$
    and let $G_k(x) = 1+\sum_{j\ge1} C_{k,j}x^j$ be the generating function.
    Let $F(x)$ be the power series
    $$F(x):=1+\sum_{d\ge1} \sum_{ (a_1,\ldots, a_d)}\frac{1}{a_1!a_2!\cdots a_d!}\prod_{j=1}^d \left (\frac{t}{j}\binom{kj}{j}\right )^{a_j} x^d.$$
    Using $\exp(ab)=\exp(b)^a$ and the product formula (see \cite[Equation~(38), 1.2.9]{MR3077152}), 
    $$
    F(x) = \exp\left( \sum_{j\ge 1} \frac{t}{j}\binom{kj}{j} x^j \right)
    = \exp\left( \sum_{j\ge 1} \frac{1}{j}\binom{kj}{j} x^j \right)^t.
    $$
    By \cite[Theorem~8]{Jansen_Kolesnikov_2024},
    $$
    G_k(x) = \exp\left( \sum_{j\ge 1} \frac{1}{kj}\binom{kj}{j} x^j \right)
    = \exp\left( \sum_{j\ge 1} \frac{1}{j}\binom{kj}{j} x^j \right)^{1/k}.
    $$
    Hence, $F(x) = G_k(x)^{kt}$. Let us define
    \begin{equation}
        A_d(n, k) := \frac{n}{-kd+n}\binom{-kd+n}{d}.
    \end{equation}
    We need to prove that for each $n\ge 1$, 
    \begin{equation}\label{eq:Gk(x)n}
        G_k(x)^n = \sum_{d\ge0} A_d(n, -k) x^d.
    \end{equation}
    The case $n=1$ is easy as the following equality is immediate
    $$
    \frac{n}{kd+n}\binom{kd+n}{d} = \frac{n}{(k-1)d+n}\binom{kd+n-1}{d}.
    $$
    Let us suppose Equation~\eqref{eq:Gk(x)n} holds true for $n$, hence $G_k(x)^{n+1}$ is
    \begin{align*}
        G_k(x)G_k(x)^{n} &= \sum_{d\ge0}\left(\sum_{j=0}^d  A_j(1, -k) A_{d-j}(n, -k)\right) x^d \\
        &=\sum_{d\ge0} A_d(n+1, -k) x^d.
    \end{align*}
    In the last equation, we used \cite[Example~4, 1.2.6]{MR3077152}. Finally, by taking $n=kt$ we prove the lemma for any positive integer $t$. Thus Equation~\ref{eq:Eisenk} holds in $\mathbb{Q}[t]$.
\end{proof}
\subsubsection{BPS-invariants}\label{sec:BPS}
We now apply equation \eqref{eqn:GV_genus_zero} to turn the Gromov--Witten invariants of Theorem~\ref{thm:gw_equivariant_CY} into genus zero BPS-invariants.
In the second case of \eqref{eqn:polynomial_cases}, we simply get $n_{0,\mathbf{0}_{X_k}}(X_k)=1$ and $n_{0,d\mathbf{0}_{X_k}}(X_k)=0$ for $d>1$.
In the third case of \eqref{eqn:polynomial_cases}, we get $n_{0,\mathbf{0}_{X_k}}(X_k)=y$ and $n_{0,d\mathbf{0}_{X_k}}(X_k)=0$ for $d>1$.
On the other hand, the equivariantly Calabi--Yau case is much more interesting.
Applying the M{\"o}bius inversion formula, we get
\begin{align}
    \label{eqn:BPS_equivariantly_CY}
    n_{0,d\mathbf{0}_{X_k}}(X_k) =
    \frac{1}{d^3 k^2}\sum_{e\mid d} \mu(d/e)(-1)^{(k+1)e+1}\binom{k^2 e}{e},
\end{align}
where $\mu$ is the M{\"o}bius function.
Table~\ref{tab:BPS_numbers} shows the resulting BPS-invariants.

\begin{table}[htb]
    \centering
    \begin{tabular}{|c||c|c|c|c|c|c|c|}
    \hline
        $k\downarrow\,\,\,d\rightarrow$ & $1$ & $2$ & $3$ & $4$ & $5$ & $6$ & $7$ \\\hline\hline
        $0$ & $1$ & $0$ & $0$ & $0$ & $0$ & $0$ & $0$ \\\hline
        $1$ & $-1$ & $0$ & $0$ & $0$ & $0$ & $0$ & $0$ \\\hline
    $2$ & $1$ & $-1$ & $2$ & $-7$ & $31$ & $-156$ & $863$ \\\hline
    $3$ & $-1$ & $-2$ & $-12$ & $-102$ & $-1086$ & $-13284$ &   $-179226$ \\\hline
    $4$ & $1$ & $-4$ & $40$ & $-620$ & $12020$ & $-268248$ &    $6601292$ \\\hline
    $5$ & $-1$ & $-6$ & $-100$ & $-2450$ & $-75050$ & $-2647580$ &  $-102998030$ \\\hline
    \end{tabular}
    \vspace{2mm}
    \caption{BPS-invariants $n_{0,d\mathbf{0}_{X_k}}(X_k)$ in genus zero for equivariantly Calabi--Yau $X_k$.
    }
    \label{tab:BPS_numbers}
\end{table}

Note that Equation~\eqref{eqn:BPS_equivariantly_CY} can be expressed also as
\begin{equation}\label{eqn:BPS_DT}
    n_{0,d\mathbf{0}_{X_k}}(X_k) =\frac{1}{d(k+1)}D(d, d, k+1),
\end{equation}
where $D(d, d, m)$ is the diagonal Donaldson--Thomas invariant of degree $d$ of the Kronecker quiver $K_m$ \cite[Theorem~5.2]{MR2801406_Reineke_2011} (see also \cite[Corollary~3.3.2 \& Table~B.1]{mainiero2015beyond}).
Expressions very similar to \eqref{eqn:BPS_equivariantly_CY} also appear in literature concerning the \emph{GW--Kronecker correspondence} (see \cite{MR4157555, MR3004575_Reineke_Weist_2013, MR2662867_Gross_Pandharipande_2010, MR2667135_Gross_Pandharipande_Siebert_2010, Kontsevich_Soibelman_2009}), which considers Gromov--Witten invariants of certain toric surfaces.

We thank Pierrick Bousseau for pointing out that Equation~\eqref{eqn:BPS_DT} was observed independently by \cite[p.36]{vangarrel2025}, as explained in the Introduction.

\subsubsection{An alternative approach for proving rationality}
A key step in the above proofs was to show that $GW_{0,0}^{X_k,d\mathbf{0}_{X_k}}\in\mathbb{Q}$.
This allowed us to conveniently set $t_2=0$ and reduce the calculation to a specific class of trees; a standard strategy in calculations like this (see \cite{Manin_1995}).

Note that this step is non-trivial and only holds in the specific assumptions of Theorem~\ref{thm:gw_equivariant_CY}, as demonstrated by Lemma~\ref{lem:polynomiality_condition}.
Another strategy for completing this step is as follows. Try to find a proper algebraic or compact Hamiltonian GKM space $Z$ with $\dim_{\mathbb{C}}Z=3$ whose GKM graph $G$ contains an edge $e$ such that
\begin{align}
\label{eqn:isolated_edge}
\mathbb{Z}_{>0}\cdot [C_e]\cap\text{span}_{\mathbb{N}_{\ge0}}\{[C_{e'}]:e'\in E(G)\setminus\{e\}\}
=
\emptyset,
\end{align}
and such that $G$ looks like $X_k$ (see Figure~\ref{fig:localP1}) locally at $e$ with $t_1,t_2,t_3,k$ as in the hypothesis of the Theorem.
Hence, any Gromov--Witten invariant calculated in that local model will automatically be in $\mathbb{Q}$.
Indeed, the invariants of the local model coincide with the invariants on $Z$ in classes $d[C_e]$.
But $Z$ is compact so its Gromov--Witten invariants are polynomials in the equivariant parameters.
As $GW_{0,0}^{X_k,d\mathbf{0}_{X_k}}$ has degree zero, it must lie in $\mathbb{Q}$.
One particularly nice case where \eqref{eqn:isolated_edge} holds is when $G$ is almost positive in the following sense.

\begin{defn}[Almost positive]\label{def:almost_positive}
    An abstract GKM graph $G$ is \emph{almost positive} if there exists $e_0\in E(G)$ such that $\mathcal{C}_1(e)>0$ holds for every $e\in E(G)\setminus\{e_0\}$ and $\mathcal{C}_1(e_0)=0$.
    We call $e_0$ the \emph{exceptional edge} of $G$.
    We call a GKM space \emph{almost positive} if its GKM graph is.
\end{defn}

For the equivariantly Calabi--Yau case with $k=2$, the local model $X_k$ appears as exceptional edge in the almost positive GKM graph of the twisted flag manifold, proving rationality of $GW_{0,0}^{X_k,d\mathbf{0}_{X_k}}$ in this case (see Section~\ref{sec:twisted_flag} and Figure~\ref{fig:F3_twisted}).

For the equivariant Calabi--Yau case with $k=1$, the GKM graph in Figure~\ref{fig:example_3d_k1} contains two edges whose neighbourhood is $X_k$.
Both of them satisfy \eqref{eqn:isolated_edge} and there is a compact Hamiltonian GKM space with this GKM graph by \cite[Theorem~5.1 \& \S6.2]{GKZ20}, so $GW_{0,0}^{X_k,d\mathbf{0}_{X_k}}$ is rational in this case as well.

\begin{question}
    For which $k\ge 3$ does there exist a Hamiltonian GKM space $X$ whose GKM graph $G$ has an edge $e$ satisfying \eqref{eqn:isolated_edge} such that $G$ looks locally like $X_k$ at $e$?
\end{question}

\begin{figure}
    \centering
    \includegraphics[width=0.3\linewidth]{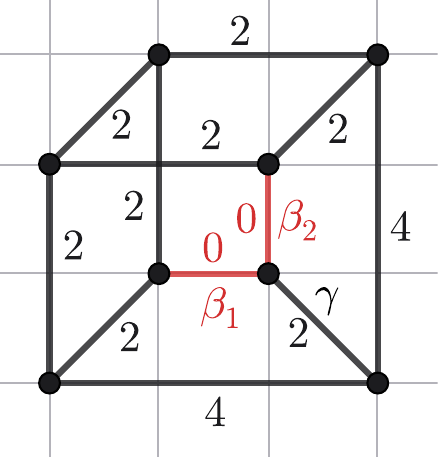}
    \caption{GKM graph of a compact Hamiltonian GKM space $X$ (see \cite[after~Corollary~2.13]{GKZ20}), illustrated following Example~\ref{ex:GKM_illustration}~(2).
    Each edge $e$ is labeled with $\mathcal{C}_1(e)$.
    By Proposition~\ref{prop:curve_classes}, $H_2(X)$ is the free $\mathbb{Z}$-module generated by $\beta_1,\beta_2$, and $\gamma$.}
    \label{fig:example_3d_k1}
\end{figure}

\subsection{The twisted flag variety}
\label{sec:twisted_flag}

There are (at least) three different constructions of a Hamiltonian GKM space that give rise to the GKM graph depicted in Figure~\ref{fig:F3_twisted}.
We refer to \cite[\S4]{MR4088417_GKZ2020_Eschenburg_Woodward_Tolman} for an overview.

\begin{figure}[htb]
    \centering
    \includegraphics[width=0.35\linewidth]{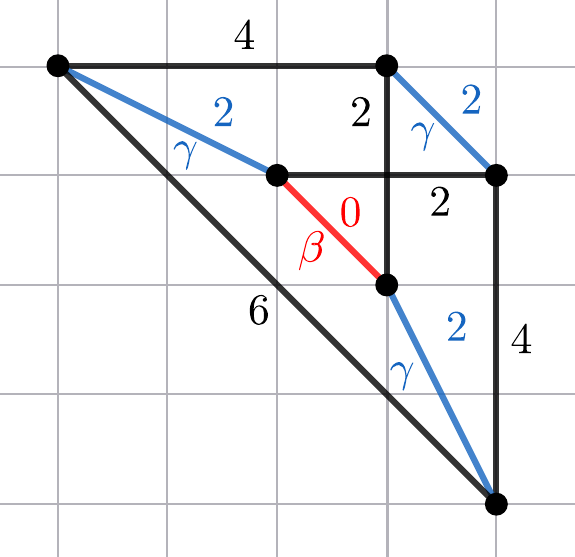}
    \caption{GKM graph of the twisted flag variety illustrated as in Example~\ref{ex:GKM_illustration}~(2). We have $H_2(X)=\langle\beta, \gamma \rangle$. Each edge $e$ is decorated with $\mathcal{C}_1(e)$.
    }
    \label{fig:F3_twisted}
\end{figure}

One possible construction is as biquotient
\[
X:= SU(3)//T,\hspace{1cm}
(s,t)\cdot A = \begin{pmatrix}
    s^2 t^2 & & \\
    & 1 & \\
    & & 1
\end{pmatrix}
A
\begin{pmatrix}
    \overline{s} &&\\
    &\overline{t}&\\
    &&\overline{st}
\end{pmatrix}
\]
as described in \cite{MR4142487_GKZ2020_Eschenburg} and named \emph{Eschenburg flag} after
\cite{MR758252_Eschenburg_habil,MR1245552_Eschenburg_paper}.
Another construction by Tolman \cite{MR1608575_Tolman_1998} works by gluing together two pieces of Hamiltonian $T$-spaces, while a similar construction of Woodward \cite{MR1608579_Woodward} uses symplectic surgery on the full flag manifold $SU(3)/(S^1)^3$.
By \cite[\S5.1]{GKZ22} these spaces are all equivariantly homeomorphic to each other and (non-equivariantly) diffeomorphic.
Abstractly, the GKM graph appears as a 3D GKM fibration over the GKM graph of $\mathbb{CP}^2$ \cite[Example~4.8]{GKZ20}.

\begin{defn}
\label{def:twisted_flag}
    A \emph{twisted flag manifold} is a Hamiltonian GKM space whose GKM graph agrees with that of Figure 
\ref{fig:F3_twisted}.
\end{defn}

These spaces are interesting to study for the following two reasons:
\begin{enumerate}
    \item They are Hamiltonian GKM spaces that do not admit a $T$-compatible K{\"a}hler structure \cite[\S7]{GKZ20}.
    \item Their GKM graph is almost positive (see Definition~\ref{def:almost_positive}).
\end{enumerate}
The first condition implies that these examples do not arise as algebraic GKM spaces.
The second condition implies via Theorem~\ref{thm:LS17} that the calculation of Gromov--Witten invariants in any curve class $\beta$ with $\int_\beta c_1(T_X)=0$ reduces to a calculation on the local model
\[
\mathcal{O}_{\mathbb{CP}^1}(1)\oplus\mathcal{O}_{\mathbb{CP}^1}(-3)\longrightarrow {\mathbb{CP}^1}.
\]
The GKM graph is precisely that of $X_k$ in Figure~\ref{fig:localP1} with $k=2$.
The unique compact edge corresponds to $\beta$ in Figure~\ref{fig:F3_twisted}.
In fact, the weights of the edge labelled $\beta$ sum to zero at both vertices, so we are in the equivariantly Calabi--Yau case of the local model (see Section~\ref{sec:our_local_setting}).

The following theorem holds in particular for the twisted flag manifolds, where $k=2$ and the exceptional edge is the one labelled $\beta$ in Figure \ref{fig:F3_twisted}.
The cases are aligned with those of \eqref{eqn:polynomial_cases} and Theorem~\ref{thm:gw_equivariant_CY}.

\begin{thm}\label{thm:twisted_Qprod}
    Let $X$ be an almost positive Hamiltonian or algebraic GKM space with $\dim_{\mathbb{R}}X=6$ and let $e$ be the exceptional edge of its GKM graph (see Definition~\ref{def:almost_positive}).
    Let $k,t_1,t_2,t_3$ be the parameters for $N_{C_e/X}$ so that $N_{C_e/X}\cong X_k$ as in Figure~\ref{fig:localP1}.
    For any $T$-stable curve $C\subset X$, let $\mathrm{PD}(C)\in H_T^4(X)$ be its equivariant Poincar{\'e} dual.
    Let $x_1,x_2\in H_T^*(X)$.
    Then
    \begin{align}
    \label{eqn:almost_pos_prod}
            x_1\ast_T x_2 &= x_1\smile x_2 +\mathrm{PD}(C_e)\left(\int^T_{C_e}x_1\right) \left(\int^T_{C_e}x_2\right) \underbrace{\sum_{d\ge 1}GW_{0,0}^{X_k,d[C_e]} d^3q^{d[C_e]}}_{(\dagger)}
            \\
            & + \left(q^\alpha\text{-terms with }\int_\alpha c_1(T_X) > 0\right).
            \nonumber
    \end{align}
    Moreover, $(\dagger)$ is given by
    \[
    (\dagger)=
\begin{cases}
\frac{q^{[C_e]}}{1-q^{[C_e]}}
& \text{if }k=0\,\,\text{ or }\,\,t_3=t_2-kt_1\text{ with }k\ge 1
\\
y\frac{q^{[C_e]}}{1-q^{[C_e]}}
& \text{if }k=1 \text{ and }t_3=-t_1+yt_2
\\
\sum_{d\ge 0}\frac{(-1)^{d(k+1)+1}}{k^2}\binom{k^2d}{d}q^{d[C_e]}
& \text{if }k\ge 1\text{ and } t_3=-t_1-t_2
\\
\end{cases}
\]
and at least one of those cases applies.
\end{thm}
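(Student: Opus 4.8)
The plan is to deduce the statement from the localization formula (Theorems~\ref{thm:LS17} and~\ref{thm:Hirschi}) together with Theorem~\ref{thm:gw_equivariant_CY}. It suffices to compute $\langle x_1\ast_T x_2,c\rangle=\sum_{\beta}GW_{0,3}^{X,\beta}(x_1,x_2,c)\,q^{\beta}$ for every $c\in H_T^*(X)$. A class $\beta$ can contribute a nonzero invariant only if it lies in the submonoid of $H_2(X)$ generated by $\{[C_{e'}]:e'\in E(G)\}$, since otherwise the index set $\Gamma_3(X,\beta)$ of Theorem~\ref{thm:LS17} is empty. Pairing such a $\beta$ with $\mathcal{C}_1(\cdot)=\int_{\cdot}c_1(T_X)$ and using that $X$ is almost positive (so $\mathcal{C}_1(e')>0$ for $e'\ne e$ and $\mathcal{C}_1(e)=0$), one sees that the classes with $\int_{\beta}c_1(T_X)=0$ are exactly $\beta=d[C_e]$, $d\ge 0$. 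Hence the sum splits into the $\beta=0$ term, which equals $\int_X^T(x_1\smile x_2\smile c)=\langle x_1\smile x_2,c\rangle$; the terms $\beta=d[C_e]$ with $d\ge 1$; and the rest, which all satisfy $\int_{\beta}c_1(T_X)>0$. This already produces the first and last summands of~\eqref{eqn:almost_pos_prod}.

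The middle part requires a locality statement. The same $\mathcal{C}_1$-argument forces every decorated tree contributing to $GW_{0,3}^{X,d[C_e]}$ (and, similarly, to $GW_{0,0}^{X,d[C_e]}$) to send all of its edges to $e$ and all of its vertices to the two endpoints $p,q$ of $e$; consequently $GW_{0,3}^{X,d[C_e]}(x_1,x_2,x_3)$ depends on each $x_i$ only through the restriction $x_i|_{C_e}$, and $GW_{0,0}^{X,d[C_e]}=GW_{0,0}^{X_k,d\mathbf{0}_{X_k}}$ because $N_{C_e/X}\cong X_k$. Choose $H\in H_T^2(X)$ with $\int_{C_e}^T H=1$ (possible after rescaling, as $[C_e]\ne 0$ and $X$ is equivariantly formal), so that $\{1,H|_{C_e}\}$ is an $H_T^*$-basis of $H_T^*(C_e)$. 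Writing $c=(\int_{C_e}^T c)\,H+c'$ with $\int_{C_e}^T c'=0$, the class $c'|_{C_e}$ is a scalar multiple of $1$, whence locality and the fundamental-class axiom give $GW_{0,3}^{X,d[C_e]}(x_1,x_2,c')=0$. Iterating this in all three slots (using the symmetry of genus-zero invariants) yields
\[
GW_{0,3}^{X,d[C_e]}(x_1,x_2,x_3)=\left(\int_{C_e}^T x_1\right)\left(\int_{C_e}^T x_2\right)\left(\int_{C_e}^T x_3\right)GW_{0,3}^{X,d[C_e]}(H,H,H),
\]
and the divisor axiom, applied three times, gives $GW_{0,3}^{X,d[C_e]}(H,H,H)=\bigl(d\int_{C_e}H\bigr)^3\,GW_{0,0}^{X,d[C_e]}=d^3\,GW_{0,0}^{X_k,d\mathbf{0}_{X_k}}$. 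Assembling $\sum_i GW_{0,3}^{X,d[C_e]}(x_1,x_2,e_i)\,e^i$, using $\sum_i(\int_{C_e}^T e_i)\,e^i=\mathrm{PD}(C_e)$, and summing over $d\ge 1$ reproduces the middle summand of~\eqref{eqn:almost_pos_prod} with $(\dagger)=\sum_{d\ge 1}d^3\,GW_{0,0}^{X_k,d\mathbf{0}_{X_k}}\,q^{d[C_e]}$.

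It then remains to identify the relevant case and evaluate the series. Since $X$ is compact, $GW_{0,0}^{X,d[C_e]}$ is polynomial in the equivariant parameters, and its degree is $-\mathrm{vdim}\,\overline{\mathcal{M}}_{0,0}(X;d[C_e])=-(\dim_{\mathbb C}X-3)-\int_{d[C_e]}c_1(T_X)=0$; hence it is a rational number, equal to $GW_{0,0}^{X_k,d\mathbf{0}_{X_k}}$ by locality. By Lemma~\ref{lem:polynomiality_condition} and~\eqref{eqn:polynomial_cases}, together with the case $k=0$ (where $GW_{0,0}^{X_0,d\mathbf{0}}=1/d^3$ unconditionally), constancy of $GW_{0,0}^{X_k,\mathbf{0}_{X_k}}$ forces $k=0$, or $k\ge 1$ with $t_3=-t_1-t_2$ or $t_3=t_2-kt_1$, or $k=1$ with $t_3=-t_1+yt_2$ for some nonzero $y$; this establishes the clause ``at least one of those cases applies''. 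In each case one substitutes the value of $GW_{0,0}^{X_k,d\mathbf{0}_{X_k}}$ --- namely $1/d^3$ when $k=0$ (see~\cite{Manin_1995}), $1/d^3$ when $t_3=t_2-kt_1$ (Theorem~\ref{thm:gw_equivariant_CY}(2)), $y/d^3$ when $k=1$ and $t_3=-t_1+yt_2$ (Theorem~\ref{thm:gw_equivariant_CY}(3)), and $\frac{(-1)^{d(k+1)-1}}{d^3k^2}\binom{k^2d}{d}$ in the equivariantly Calabi--Yau case (Theorem~\ref{thm:gw_equivariant_CY}(1)) --- multiplies by $d^3$, and sums over $d\ge 1$. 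The non-equivariantly Calabi--Yau cases give the geometric series $\frac{q^{[C_e]}}{1-q^{[C_e]}}$, with an extra factor $y$ when $k=1$ and $t_3=-t_1+yt_2$; the equivariantly Calabi--Yau case gives the stated binomial sum (using $(-1)^{d(k+1)-1}=(-1)^{d(k+1)+1}$).

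The step I expect to be the main obstacle is the locality/factorization argument of the second paragraph: proving carefully that no decorated tree contributing in class $d[C_e]$ involves any edge other than $e$, and that the divisor and fundamental-class axioms on the compact $X$ pin down $GW_{0,3}^{X,d[C_e]}$ completely, so that no explicit integration over the space of multiple covers of $C_e$ is needed. An alternative is to pass to the local model $X_k$ and invoke the Kontsevich--Manin axioms there, which is equally effective but requires knowing those axioms for the non-compact target $X_k$. The rest is bookkeeping with Theorem~\ref{thm:LS17} and the substitutions from Theorem~\ref{thm:gw_equivariant_CY}.
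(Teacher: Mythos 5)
Your proposal is correct and follows essentially the same route as the paper's proof: reduce via almost positivity to classes $d[C_e]$ computed on the local model $X_k$, replace the insertions by divisor classes with the same restriction to $C_e$ (using surjectivity of $H_T^*(X)\to H_T^*(C_e)$ in degrees $\le 2$), apply the fundamental-class and divisor axioms on the compact $X$ to get the factor $d^3\prod_i\int_{C_e}^T x_i$, and then pin down the case via polynomiality (Lemma~\ref{lem:polynomiality_condition}) and substitute Theorem~\ref{thm:gw_equivariant_CY}. The only difference is presentational: you decompose each insertion as a multiple of a fixed divisor $H$ plus a class killed by the fundamental-class axiom, whereas the paper directly chooses representatives in $H_T^*\cdot H_T^{\le 2}(X)$ with the same restriction to $C_e$ — these are the same argument.
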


\begin{proof}
    The $q^0$ term of $x_1\ast_T x_2$ is always $x_1\smile x_2$.
    Since $e$ is the only edge with $\mathcal{C}_1(e)=0$ and we have $\mathcal{C}_1(e')>0$ for all other edges $e'$, all relevant Gromov--Witten invariants can be computed over the local model $X_k\cong N_{C_e/X}\rightarrow C_e$ in Figure~\ref{fig:localP1}, involving only positive multiples of $[C_e]$, which corresponds to the class $\mathbf{0}_{X_k}\in H_2(X_k)$ of the zero section.
    Let $i\colon C_e\hookrightarrow X$ be the embedding of $C_e$ into $X$.

    By the defining properties
    \[
    \int_X^T (x_1\ast_T x_2)\smile z= \sum_{\beta\in H_2(X)} GW_{0,3}^{X,\beta}(x_1,x_2,z) q^\beta
    \]
    of $\ast_T$ and
    \[
    \int^T_X \mathrm{PD}(C_e)\smile z = \int^T_{C_e} i^*z
    \]
    of $\mathrm{PD}(C_e)$, it suffices to prove the following equation.
    \[
        GW_{0,3}^{X,d[C_e]}(x_1,x_2,z) = d^3\left(\int^T_{C_e}x_1\right) \left(\int^T_{C_e}x_2\right)\left(\int^T_{C_e}z\right)GW_{0,0}^{X_k,d\mathbf{0}_{X_k}}.
    \]
    Pick $x_1',x_2',z'\in H_T^*\cdot H_T^{\le 2}(X)$ such that $i^*x_1'=i^*x_1$, $i^*x_2'=i^*x_2$, and $i^*z'=i^*z$.
    This is possible because $H_T^*(X)\rightarrow H_T^*(C_e)$ is surjective.
    Indeed, it is surjective in degree 0 by $H_T^*$-linearity and it is surjective in degree 2 because some $w\in H_T^*(X)$ has non-zero pairing with $[C_e]$ (cf. Lemma~\ref{lem:torsion_free}).
    Since all calculations take place over the local model $X_k$, we have
    \[
     GW_{0,3}^{X,d[C_e]}(x,y,z) =  GW_{0,3}^{X,d[C_e]}(x',y',z').
    \]
    By the \emph{fundamental class axiom} and the \emph{divisor axiom} on $X$, we obtain
    \[
        GW_{0,3}^{X,d[C_e]}(x',y',z')
        = \left(\int^T_{d[C_e]}x'\right) \left(\int^T_{d[C_e]}y'\right)\left(\int^T_{d[C_e]}z'\right)GW_{0,0}^{X,d[C_e]}.
    \]
    The proof of \eqref{eqn:almost_pos_prod} is completed by noting that $\int^T_{C_e}x_1'=\int^T_{C_e}x_1$ (and similarly for $x_2,z$), that $\int_{d[C_e]}(\dots)=d\int_{C_e}(\dots)$, and that $GW_{0,0}^{X,d[C_e]} = GW_{0,0}^{X_k,d\mathbf{0}_{X_k}}$ by locality of the calculation.

    Finally, the expression for $(\dagger)$ follows from Theorem~\ref{thm:gw_equivariant_CY} and the sentence before it regarding the well-known case $k=0$.
    Note that no other case can occur by Lemma~\ref{lem:polynomiality_condition}, since $GW_{0,0}^{X_k,\mathbf{0}_{X_k}}$ must be a polynomial in $t_1,t_2,t_3$ (cf. Proposition~\ref{prop:realizability_almost_positive} below).
\end{proof}

\begin{rem}
    The point of introducing $x',y',z'$ in the above proof is so that we can apply the fundamental class and divisor axioms on the compact space $X$ rather than on the non-compact $X_k$.
    
\end{rem}

As an example application of {\pkg} (see Section~\ref{sec:tool}), we calculate a few more quantum products on twisted flag manifolds.

\begin{thm}
\label{thm:twisted_numbers}
    Let $X$ be a twisted flag manifold and $p:=\mathrm{PD}([\mathrm{pt}])\in H^*(X)$. Then
    \begin{multline}\label{eqn:twisted_F3_p_times_p}
        p\ast p = q^{3\gamma}\left(
            q^{2\beta} - 5q^{3\beta} + 35 q^{4\beta} - 273 q^{5\beta} + 2244q^{6\beta}-19019q^{7\beta}+\dots
        \right) \\
        + \left(q^{a\gamma + n\beta}\text{-terms with } 0\le a\le 2 \right).
    \end{multline}
\end{thm}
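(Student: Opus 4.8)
\textbf{The plan} is to pin down the shape of $p\ast p$ by a degree count and then evaluate the finitely many surviving Gromov--Witten invariants with the localization formula. Write a general effective class as $\beta'=a\gamma+n\beta$ with $a,n\ge 0$; from Figure~\ref{fig:F3_twisted} one has $c_1(T_X)\cdot\beta=\mathcal C_1(e_0)=0$ (the exceptional edge $e_0$ has $[C_{e_0}]=\beta$) and $c_1(T_X)\cdot\gamma=2$ (this is also forced by the form of \eqref{eqn:twisted_F3_p_times_p}), so $\mathrm{vdim}\,\overline{\mathcal M}_{0,3}(X;\beta')=(\dim_{\mathbb C}X-3)+c_1(T_X)\cdot\beta'+3=2a+3$. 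Since $p$ has complex degree $3$, the invariant $GW_{0,3}^{X,\beta'}(p,p,e_i)$ vanishes unless $\deg_{\mathbb C}e_i=2a-3$, hence unless $a\in\{2,3\}$; for $a=3$ this forces $e_i=p$ with Poincar\'e-dual $e^i=\mathbf 1$, and for $a=2$ it gives coefficients $e^i\in H^4(X;\mathbb Q)$. Therefore
\[
p\ast p=\Big(\sum_{n\ge 0}GW_{0,3}^{X,3\gamma+n\beta}(p,p,p)\,q^{3\gamma+n\beta}\Big)\mathbf 1+\big(q^{a\gamma+n\beta}\text{-terms with }0\le a\le 2\big),
\]
which is already the shape of \eqref{eqn:twisted_F3_p_times_p}. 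As a consistency check, Theorem~\ref{thm:twisted_Qprod} applied to $x_1=x_2=p$ predicts no $q^0$- and no $q^{d\beta}$-term, since $p\smile p=0$ and $\int_{C_{e_0}}p=0$; consistently, every surviving class above has $a\ge 2$.

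It remains to evaluate $N_n:=GW_{0,3}^{X,3\gamma+n\beta}(p,p,p)$. By Theorem~\ref{cor:GKM_determines_GW}, which rests on the connection-independence of Theorem~\ref{thm:con_indep}, these invariants are determined by the GKM graph of Figure~\ref{fig:F3_twisted} alone; this is what makes the computation feasible, since the twisted flag manifold is merely Hamiltonian and the connection it induces is not accessible by a direct geometric argument (cf.\ Remark~\ref{rem:computing_conn}). I would then run the localization formula of Theorem~\ref{thm:LS17} — valid here by Theorem~\ref{thm:Hirschi} — summing $GW_{\overrightarrow\Gamma}(p,p,p)$ over the decorated trees $\overrightarrow\Gamma\in\Gamma_3(X,3\gamma+n\beta)$ after replacing each $p$ by an equivariant lift of the point class at a conveniently chosen fixed vertex; each summand is a rational function in the two equivariant parameters, and the total collapses to the constant $N_n$. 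Carried out with {\pkg}, this yields $N_0=N_1=0$ and $N_2,\dots,N_7=1,-5,35,-273,2244,-19019$, which is exactly \eqref{eqn:twisted_F3_p_times_p}; the vanishing $N_0=N_1=0$ is likewise part of the output and can be checked directly from the tree sum.

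The conceptual part is short: once Theorems~\ref{thm:con_indep} and~\ref{cor:GKM_determines_GW} free us from identifying the induced connection, everything reduces to a degree count plus a bounded calculation. \textbf{The main obstacle is purely computational}: the set $\Gamma_3(X,3\gamma+n\beta)$ grows quickly with $n$, and each decorated tree contributes a rational function in the equivariant parameters whose sum must simplify to a single rational number — organizing this efficiently is exactly the role of {\pkg}.
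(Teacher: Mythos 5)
Your proposal is correct and follows essentially the same route as the paper: a grading/degree count showing that the only contributions outside the $0\le a\le 2$ remainder come from classes $3\gamma+n\beta$ with scalar coefficient $GW_{0,3}^{X,3\gamma+n\beta}(p,p,p)$, followed by evaluating those invariants via the GKM localization formula implemented in {\pkg}. The paper phrases the degree count via $|A|+2\int_\delta c_1(T_X)=12$ rather than via $\mathrm{vdim}$ and the degrees of the dual basis, but this is the same computation.
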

\begin{proof}
    Since $\ast$ preserves the grading and $p\in H^6(X)$, every summand of $p\ast p$ is of the form $A q^\delta$ for some $A\in H^{|A|}(X;\mathbb{Q})$, $\delta\in H_2^\text{eff}(X)$, where $|A|+2\int_\delta c_1(T_X) = 12$.
    By Figure~\ref{fig:F3_twisted}, every $\delta$ yielding a non-zero Gromov--Witten invariant is of the form
    \[
    \delta= b\beta + c\gamma
    \]
    for some $b,c\in\mathbb{Z}_{\ge 0}$.
    Since $\int_\beta c_1(T_X)=0$ and $\int_\gamma c_1(T_X)=2$, we have $\int_\delta c_1(T_X)=2b$ and the degree condition translates into $|A|+4b=12$.
    Since $|A|$ is non-negative, the only terms in $p\ast p$ which are not of the form $Aq^\delta$ with $\int_\delta c_1(T_X)<6$ are therefore those with $b=3$ and $|A|=0$, i.e. $A\in\mathbb{Q}$.
    Hence, in \eqref{eqn:twisted_F3_p_times_p} we only need to consider $q^\delta$ terms with $\delta = 3\gamma + n\beta$ for $n\in\mathbb{Z}_{\ge 0}$.

    To calculate the rational coefficients of these terms for the first few $n$, we use \pkg.
    The precise code leading to the numbers in \eqref{eqn:twisted_F3_p_times_p} is available in the online documentation (see Section~\ref{sec:tool}).
\end{proof}
\begin{rem}
    We have not identified a closed form for the sequence
    \[
    (0, 0, 1, -5, 35, -273, 2244, -19019, \dots)
    \]
    occurring as coefficients in \eqref{eqn:twisted_F3_p_times_p}. More terms may be found in \cite[A388298]{oeis}.
    However, integrality of the terms is explained as follows.
    First, $X$ is \emph{semipositive} in the sense of \cite[Definition~6.4.1]{McDuff_Salamon_2012} because $\dim_{\mathbb{R}}(X)= 6$.
    Thus, the construction of Gromov--Witten invariants for semipositive symplectic manifolds applies, which yields integer invariants for the curve classes at hand \cite[Theorem~7.1.1,~\S7.5]{McDuff_Salamon_2012}.
    Second, this construction agrees with the definition via global Kuranishi charts by \cite[Theorem~6.4]{Hirschi_2023}.
    Finally, the global Kuranishi chart invariants are precisely what the localization formula computes by Theorem~\ref{thm:Hirschi}.
    Hence, the sequence consists of integers.
\end{rem}

\subsection{Realizability of GKM graphs}
\label{sec:realizability}

So far, we have used GKM theory to calculate certain Gromov--Witten invariants.
Now we reverse this logic and demonstrate how Gromov--Witten theory can be applied to the \emph{realizability problem} of GKM theory.
Given an abstract GKM graph, this problem asks whether there exists a GKM space whose GKM graph is the given one.
The question has many interesting refinements by asking for specific $T$-compatible geometric structures that the GKM space should have, for example a $T$-compatible K{\"a}hler structure \cite[\S2]{GKZ20}.
The twisted flag varieties of Section~\ref{sec:twisted_flag} provide examples of Hamiltonian GKM spaces that do not admit a compatible K{\"a}hler structure.

By Theorem~\ref{cor:GKM_determines_GW} the Gromov--Witten invariants of Hamiltonian or algebraic GKM spaces are determined purely by their GKM graph.
Theorem~\ref{thm:LS17}, Proposition~\ref{prop:curve_classes} and Theorem~\ref{thm:con_indep} give an algorithm to compute them.
However, from Theorem~\ref{thm:LS17} it is not at all obvious that the Gromov--Witten invariants calculated using this formula satisfy the Kontsevich--Manin axioms. It is not even clear a priori why they should give polynomials in the equivariant parameters rather than rational functions.
In fact, it is not true that these properties hold for any abstract GKM graph as demonstrated by the examples below.
We can therefore derive new necessary criteria for Hamiltonian or algebraic realizability of GKM graphs by determining when their GW invariants are well-behaved.

We only demonstrate this idea with a few examples here.
We intend to develop this further in future work.

\subsubsection{Example 1: a non-Hamiltonian 2D GKM graph}
Let $G$ be the 2D GKM graph given by a cycle of length 8 with weights
\[
t_1, t_2, -t_1, -t_2,t_1, t_2, -t_1, -t_2
\]
in cyclic order, as depicted in Figure~\ref{fig:GKM_non_Hamiltonian}.
\begin{figure}
    \centering
    \includegraphics[width=0.27\linewidth]{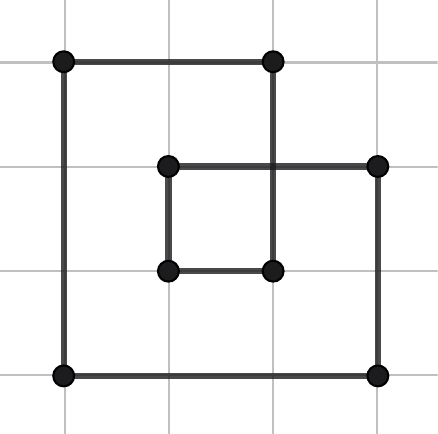}
    \caption{An abstract GKM graph not realizable by any Hamiltonian GKM space, illustrated using the method of Example~\ref{ex:GKM_illustration}~(2).
    The same GKM graph is discussed in \cite[Example~2.44]{GKZ22}.}
    \label{fig:GKM_non_Hamiltonian}
\end{figure}
Let $\beta$ be the curve class corresponding to the first plus the second edge, both with multiplicity one.
Then a calculation with {\pkg} (see Section~\ref{sec:tool}) yields
\[
GW_{0,0}^{G,\beta} = \frac{1}{t_1^2t_2 - t_1t_2^2},
\]
and
\[
\left(\mathrm{PD}(p_1)\ast_T \mathrm{PD}(p_1)\right)[q^\beta] = 
\left(\frac{t_2^2}{t_1^2 - t_1t_2}, \frac{-t_2}{t_1}, \frac{-t_2}{t_1 - t_2}, 0, 0, 0, 0, 0\right),
\]
where the vector notation indicates values under localization to the fixed points in cyclic order.
Any of those rational functions failing to be polynomial in $t_1,t_2$ proves that $G$ is not the GKM graph of a Hamiltonian or algebraic GKM space.

\begin{rem}
    In this particular case, there are multiple other ways of deducing the same result.
    First, one could prove that no moment map image of $G$ could satisfy the required convexity property: there would always be a segment on the boundary of the convex hull of the image that is not the image of a $T$-stable $S^2$ \cite[\S2.3]{GKZ20}.
    Second, one could calculate the combinatorial Betti numbers and get $(b_0,b_2,b_4)=(2, 4, 2)$, which cannot be the Betti numbers of a connected space.
    The combinatorial Betti numbers match the topological Betti numbers if the underlying GKM space is Hamiltonian \cite[Theorem~1.3.2]{GZ01}, so this gives another way of showing that $G$ is not realizable as a Hamiltonian GKM space.
\end{rem}

\begin{rem}
    As noted by \cite[Example~2.44]{GKZ22}, $G$ is realizabale for a weaker notion of GKM space where we drop the symplectic assumption and the axial function is well-defined only up to sign.
    $G$ is then realized by the equivariant connected sum of three copies of $S^2\times S^2$, each carrying the diagonal action of $T=S^1\times S^1$, with a certain choice of $T$-invariant almost complex structure.
\end{rem}

\subsubsection{Example 2: almost positive 3D GKM graphs}

The following proposition is an example of how our understanding of the local model $X_k$ (see Figure~\ref{fig:localP1}) leads to new Hamiltonian realizability criteria for GKM graphs.
Examples that satisfy the conclusion of the proposition are given by Figure~\ref{fig:example_3d_k1} ($e$ is the edge representing $\beta_1$ or $\beta_2$) and Figure~\ref{fig:F3_twisted} ($e$ is the edge representing $\beta$).


\begin{prop}
\label{prop:realizability_almost_positive}
    Let $G$ be a 3-valent abstract GKM graph and $e\in E(G)$ such that $\mathcal{C}_1(e)=0$ and $[C_e]\notin\text{span}_{\mathbb{N}_{\ge0}}\{[C_{e'}]:e'\in E(G)\setminus\{e\}\}$.
    This includes the case when $G$ is almost positive (see Definition~\ref{def:almost_positive}) and $e$ is the exceptional edge.
    Let $t_1,t_2,t_3, k$ be the local parameters for $e$ as in Figure~\ref{fig:localP1}.
    
    If $G$ can be realized as compact Hamiltonian or projective algebraic GKM space then
    \begin{itemize}
        \item $k=0$, or
        \item $k=1$ and $t_3= -t_1+y t_2$ for some $y\in\mathbb{Z}\setminus\{0\}$, or
        \item $k\ge 1$ and $t_3 = -kt_1+t_2$, or 
        \item $t_3=-t_1-t_2$.
    \end{itemize}
\end{prop}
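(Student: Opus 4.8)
The plan is to run the local-model computation of Section~\ref{sec:CY_local_models} backwards. Suppose $G$ is realized by a compact Hamiltonian (or projective algebraic) GKM space $Z$. Since $\mathcal{C}_1(e)=0$, the remark after Definition~\ref{def:C1} gives $\int_{C_e}c_1(T_Z)=0$, hence $\deg N_{C_e/Z}=-2$ and $N_{C_e/Z}\cong\mathcal{O}_{\mathbb{P}^1}(k-1)\oplus\mathcal{O}_{\mathbb{P}^1}(-k-1)$ for a unique $k\ge 0$; this $k$ and the weights $t_1,t_2,t_3$ at one endpoint of $e$ are precisely the data of Figure~\ref{fig:localP1}. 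If $k=0$ we are in the first case, so from now on assume $k\ge 1$; I will show that $GW_{0,0}^{Z,[C_e]}$ being a polynomial in the equivariant parameters forces $t_3$ into the stated list.

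First I would pin down $GW_{0,0}^{Z,[C_e]}$ via Theorem~\ref{thm:LS17}. The hypothesis $[C_e]\notin\mathrm{span}_{\mathbb{N}_{\ge0}}\{[C_{e'}]:e'\neq e\}$ forces every decorated tree $\overrightarrow{\Gamma}$ contributing in class $[C_e]$ to be a single edge of degree $1$ mapped onto $e$: if $n$ denotes the total degree of $\overrightarrow{\Gamma}$ over $e$, then $\sum_{e'\neq e}(\dots)[C_{e'}]=(1-n)[C_e]$, and the hypothesis rules out $n=0$ (it would exhibit $[C_e]$ in that span), while pairing with an ample class---resp.\ the symplectic class $[\omega]$, using $[C_e]\neq 0$ and that every $C_{e'}$ has positive degree---rules out $n\ge2$ and forces the remaining summand to vanish. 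Evaluating the single-edge contribution uses only the weights $t_1,t_2,t_3$ and the splitting degrees $k-1,-k-1$ of $N_{C_e/Z}$ (so no connection choice is involved), and it therefore agrees with the local-model value $GW_{0,0}^{X_k,\mathbf{0}_{X_k}}=\prod_{i=1}^{k}\frac{i t_1+t_3}{t_2-(i-1)t_1}$ of Lemma~\ref{lem:polynomiality_condition}. On the other hand, $Z$ compact makes $\overline{\mathcal{M}}_{0,0}(Z,[C_e])$ proper, so $GW_{0,0}^{Z,[C_e]}\in H_T^*(\mathrm{pt};\mathbb{Q})=\mathbb{Q}[t_1,\dots,t_r]$ (a constant, in fact, since $\mathrm{vdim}=\mathcal{C}_1(e)=0$).

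The remaining step is the $d=1$ case of the bookkeeping in the proof of Lemma~\ref{lem:polynomiality_condition}. Since the displayed product is a polynomial, $\prod_{i=1}^{k}(t_2-(i-1)t_1)$ divides $\prod_{i=1}^{k}(it_1+t_3)$ in the UFD $\mathbb{Q}[t_1,\dots,t_r]$; by $2$-independence the $k$ forms $t_2-(i-1)t_1$ are pairwise non-proportional irreducibles, as are the $k$ forms $it_1+t_3$, so there is a bijection $\sigma$ of $\{1,\dots,k\}$ with $t_2-(j-1)t_1\parallel\sigma(j)t_1+t_3$ for every $j$. The left side lies in $\mathrm{span}(t_1,t_2)$ and the right in $\mathrm{span}(t_1,t_3)$; were $t_1,t_2,t_3$ linearly independent this would put $t_2\in\mathrm{span}(t_1)$, contradicting $2$-independence, so $t_3=at_1+bt_2$ with $a,b\neq0$. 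Comparing coefficients of $t_1,t_2$ yields $\sigma(j)=-a-b(j-1)$, whence $\{-a-b(j-1):1\le j\le k\}=\{1,\dots,k\}$. For $k=1$ this is just $a=-1$ (the case $t_3=-t_1+yt_2$); for $k\ge2$, an arithmetic progression of $k$ integers whose set equals $\{1,\dots,k\}$ has common difference $\pm1$, which forces $(a,b)=(-1,-1)$ or $(-k,1)$, i.e.\ $t_3=-t_1-t_2$ or $t_3=-kt_1+t_2$.

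The step I expect to be the main obstacle is the reduction to the local model: making rigorous that no ``stray'' decorated tree contributes in class $[C_e]$. This is exactly where both the isolation hypothesis on $[C_e]$ and the pointedness of the effective cone of the \emph{compact} space $Z$ enter; once the computation is known to take place entirely over the edge $e$, the rest is the degree-one specialization of calculations already carried out in Section~\ref{sec:CY_local_models}.
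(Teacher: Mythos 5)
Your proof is correct and follows essentially the same route as the paper: reduce to the local model $X_k$ over $C_e$, invoke the polynomiality criterion of Lemma~\ref{lem:polynomiality_condition}, and use compactness of the realizing space to force $GW_{0,0}^{Z,[C_e]}$ to be a (degree-zero, hence constant) polynomial. The only difference is that you spell out details the paper leaves implicit --- the decorated-tree argument showing that only the single degree-one edge over $e$ contributes, and a re-derivation of the arithmetic already contained in Lemma~\ref{lem:polynomiality_condition} --- both of which are sound.
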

\begin{rem}
    Note that this condition on $e$ can be checked using Proposition~\ref{prop:curve_classes}.
\end{rem}
\begin{proof}
    By the assumption on $e$, all Gromov--Witten invariants in class $[C_e]$ can be computed by restriction to the local model $X_k\rightarrow C_e$.
    By Lemma~\ref{lem:polynomiality_condition}, $GW_{0,0}^{X_k,\mathbf{0}_{X_k}}$ cannot be a polynomial in the equivariant parameters if none of the conditions in the list apply.
    But if $G$ is realized by a Hamiltonian or algebraic GKM space $X$ then $GW_{0,0}^{X,[C_e]}=GW_{0,0}^{X_k,\mathbf{0}_{X_k}}$ must be polynomial.
\end{proof}

\subsection{Enumerating curves contained in a hyperplane}\label{sec:enumerating_curves}

Let us consider the following problem:
\begin{question}\label{q:planar_curves}
    Let $n,k,$ and $d$ be positive integers. How many rational curves of degree $d$ in $\mathbb{P}^n$ pass through a given set of linear subspaces in general position, under the condition that each curve lies entirely within some $k$-dimensional linear subspace?
\end{question}
We show how {\pkg} answers this question.
Let us denote by $S$ the flag variety $\mathrm{Fl}(1,k,n-k)$ (see Example~\ref{example:def_Fl}), i.e.,
$$S=\{(x,h)\in \mathbb{P}^n \times G(k+1, n+1) : x\in h\},$$
and by $\pi$ and $\pi'$ the projections to, respectively, $\mathbb{P}^n$ and $G(k+1,n+1)$. 
Note that curve classes in $S$ are given by combinations of classes $\beta$ and $\beta'$ where $\beta$ (resp., $\beta'$) is a line in a fiber of $\pi'$ (resp., $\pi$), see \cite[Section~9.3.1]{3264}. In particular, $\pi(\beta)$ is the class of a line and $\pi'(\beta)$ is the class of a point.

It follows that $\overline{\mathcal{M}}_{0,m}(S, d\beta)$ parametrizes stable maps to $S$ such that the projection of those maps to $\mathbb{P}^n$ is a degree $d$ curve, and the projection to the Grassmannian $G(k+1, n+1)$ is a point.
Since $S$ is homogeneous, it admits a transitive action of an algebraic group $G$. The virtual dimension of 
$\overline{\mathcal{M}}_{0,m}(S, d\beta)$ is $$(k+1)(n-k+d)+k+m-3.$$
\begin{rem}
    In \cite{MR3884168,MR4927162}, the authors consider only the case $n=3$ and $k=2$, focusing on the singularities of the curves. They construct the moduli space $\overline{\mathcal{M}}_{0,m}(E/B, \beta)$ of stable maps to $E$ relative to a fibration $E/B$. They apply this construction to the fibration $S/G(3,4)$. Finally, using WDVV equations they find a recursive formula for the number of rational \textbf{planar} curves of degree $d$ in $\mathbb{P}^3$ meeting $r$ general lines and $s$ general points, where $r+2s= 3d+2$. In this section we give an answer to Question~\ref{q:planar_curves} for any $n$ and $k$ without using WDVV equations.
\end{rem}
Let $\{\gamma_i\}_{i=1}^m$ be pure dimensional subvarieties of $\mathbb{P}^n$ such that
\begin{equation}
    \label{eq:codimension_cond}
    \sum_{i=1}^m \mathrm{codim}(\gamma_i)=(k+1)(n-k+d)+k+m-3.
\end{equation}
Using \cite[Lemma~14]{MR1492534}, there are elements $\{g_i\}_{i=1}^m\subset G$ such that scheme theoretic intersection
\begin{equation}
    \label{eq:intersectionLemma14}
    \ev_1^{-1}(g_1\cdot \pi^{-1}(\gamma_1))\cap \cdots \cap \ev_m^{-1}(g_m\cdot \pi^{-1}(\gamma_m))
\end{equation}
is a reduced number of points supported in the locus of automorphism-free maps with irreducible domain. Moreover, this number coincides with the Gromov--Witten invariant
\begin{equation}\label{eq:planar_curves}
    \int_{\overline{\mathcal{M}}_{0,m}(S, d\beta)} \ev_1^*(\pi^*(\gamma_1))\smile \cdots \smile\ev_m^*(\pi^*(\gamma_m)).
\end{equation}
The intersection in Equation~\eqref{eq:intersectionLemma14} is obtained as an application of the Kleiman--Bertini Theorem in \cite{Kle} (see for example \cite[Corollary~3.9]{MR4634985} and references therein for other examples of this well-know application), and it could be empty. 

If $f\colon \mathbb{P}^1\rightarrow S$ is a stable map representing a point of Equation~\eqref{eq:intersectionLemma14}, the image of the composition $\pi\circ f\colon\mathbb{P}^1\rightarrow \mathbb{P}^n$ is a degree $d$ curve, contained in a $k$-dimensional linear subspace, and clearly meets some translate of $\gamma_i$ for all $i$. Assuming all $\gamma_i$ in general position, we obtain that Equation~\eqref{eq:planar_curves} is the answer to Question~\ref{q:planar_curves}.

We computed some concrete examples of this equation using {\pkg} and the results were as expected. 
Moreover, we computed some new enumerative numbers of curves. In Table~\ref{tab:plane_cubics_in_P4} we list the number of cubics in $\mathbb{P}^4$ contained in a plane and meeting $a$ points, $b$ lines, and $c$ planes in general position. We impose $3a+2b+c=14$ by Equation~\eqref{eq:codimension_cond}, and $2a+b\le 6$ because we need at least one plane passing through the points and the lines. 

\begin{table}[htb]
    \centering
    \begin{tabular}{|c|c|||c|c|||c|c|||c|c|}
    \hline
        $(a,b,c)$ & $N$ & $(a,b,c)$ & $N$ & $(a,b,c)$ & $N$ & $(a,b,c)$ & $N$ \\\hline\hline
        
        $(0, 6, 2)$ & $60$ & $(3, 0, 5)$ & $12$ & $(1, 2, 7)$ & $1992$ & $(0, 2, 10)$ & $188100$ \\\hline
        
        $(1, 4, 3)$ & $24$ & $(1, 3, 5)$ & $252$ & $(2, 0, 8)$ & $1032$ & $(1, 0, 11)$ & $85500$ \\\hline
        
    $(2, 2, 4)$ & $12$ & $(2, 1, 6)$ & $144$ & $(0, 3, 8)$ & $28440$ & $(0, 1, 12)$ & $1216080$  \\\hline
    
    $(0, 5, 4)$ & $540$ & $(0, 4, 6)$ & $4080$ & $(1, 1, 9)$ & $13464$ & $(0,0,14)$  & $7833840$ \\\hline
    \end{tabular}
    \vspace{2mm}
    \caption{Number $N$ of plane cubics in $\mathbb{P}^4$ meeting $a$ points, $b$ lines, and $c$ planes.}
    \label{tab:plane_cubics_in_P4}
\end{table}

Equation~\eqref{eq:planar_curves} is zero if $d<k$. The reason, from a geometric point of view, is that in this case any irreducible rational curve of degree $d$ in $\mathbb{P}^n$ is contained in many $k$-dimensional linear subspaces. Thus, in order to have a finite number of stable maps, we need to impose some constraints on the subspaces containing the curves. More results on this problem may be found in \cite{MurPaul}.

\subsection{Smooth Schubert varieties}
\label{sec:Schubert}

In this final section, we consider smooth Schubert varieties and answer the following question given to the first author by Prof. Leonardo Mihalcea at the conference \emph{Torus Actions and Characteristic Classes} in Bedlewo, Poland, June 2025.

\begin{question}
    Does there exist a smooth Schubert variety on which some quantum product has infinitely many terms? That is, there are infinitely many curve classes $\beta$ such that the $q^\beta$ term in the quantum product does not vanish.
\end{question}

The interest in this question arises from recent advances in the development of quantum Schubert calculus for Schubert varieties (see for example \cite{LSXY25_quantum_schubert_smooth} and references therein).
While quantum Schubert calculus classically studies homogeneous spaces $X$, which are positive in the sense that $\int_\beta c_1(T_X)>0$ for every curve class $\beta\in H_2^\text{eff}(X)$, this positivity property generally fails for Schubert varieties.
Therefore, the grading of $q^\beta$ introduced in Section~\ref{sec:equivariant_QH} no longer rules out the possibility of infinitely many terms in a quantum product.

In the following, let $R$ be a root system, $G$ be the simply-connected complex Lie group with root system $R$, $B\subset G$ be a Borel subgroup, $a_1\dots,a_r$ be the simple roots defined by $B$, $S$ be a set of simple roots, and $P\subset G$ be the parabolic subgroup defined by $S$.
Let $W(G)$ be the Weyl group of $G$, generated by the simple reflections $s_1,\dots,s_r$ associated to $a_1,\dots,a_r$.
Given $w\in W(G)/W(P)$, let $X_w\subset G/P$ be the corresponding Schubert variety.
If $X_w$ is smooth, it is an algebraic GKM space with respect to the maximal torus $T\subset B$, as remarked in Example~\ref{example:Schubert_are_GKM}.

\begin{figure}[htb]
    \centering
    \includegraphics[width=0.6\linewidth]{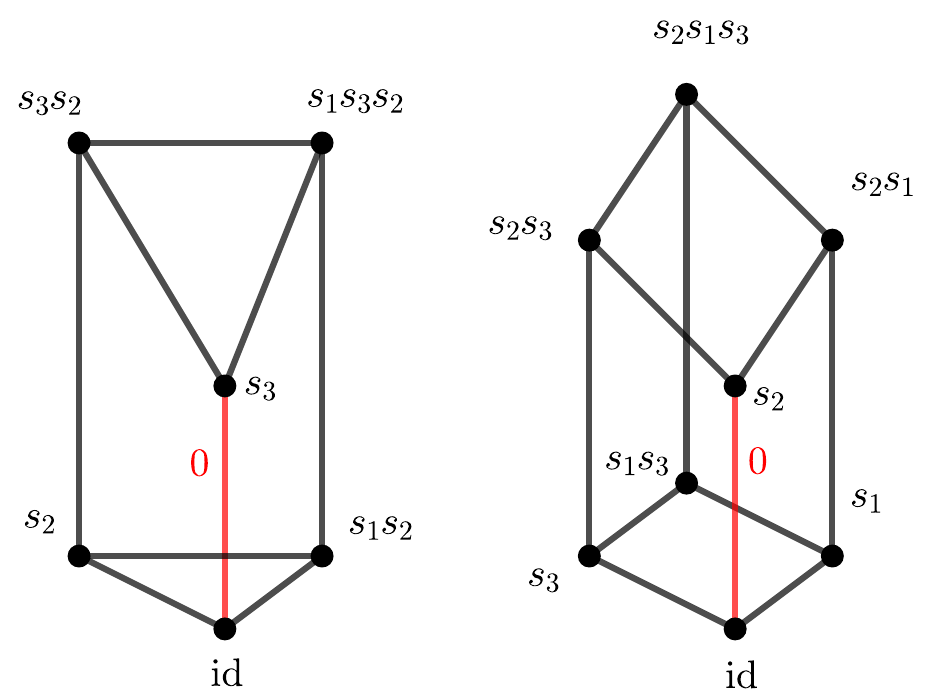} \hspace{5mm}
    \includegraphics[width=0.34\linewidth]{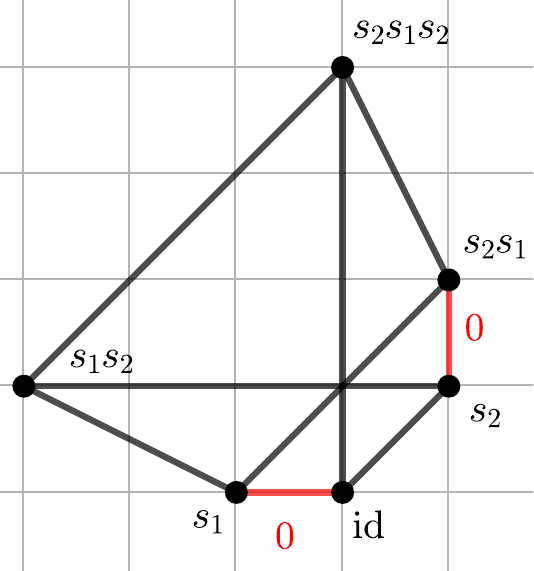}
    \\
    \vspace{3mm}
    $\begin{cases}
    R = A_3 \\
    S = \{a_1\}\\
    w = s_1s_3s_2
    \end{cases}$
    \hspace{15mm}
    $\begin{cases}
    R = A_3 \\
    S = \emptyset\\
    w = s_2s_1s_3
    \end{cases}$
    \hspace{15mm}
    $\begin{cases}
    R = G_2 \\
    S = \emptyset\\
    w = s_2s_1s_2
    \end{cases}$
    \caption{GKM graphs of some smooth Schubert varieties $X_w$ where each edge $e$ satisfies $\mathcal{C}_1(e)\ge 0$ and edges with $\mathcal{C}_1(e)=0$ are marked with a red zero.
    We omit the values of the axial function for brevity.
    From left to right, we have $X_w\subset \text{Fl}(2,1,1)$ (partial flag variety for $\mathbb{C}^4$), and $X_w\subset\text{Fl}(1,1,1,1)$ (full flag variety for $\mathbb{C}^4$), and $X_w\subset G/B$ (in type $G_2$), respectively.
    }
    \label{fig:Schubert}
\end{figure}

\begin{thm}\label{thm:Schubert}
    There exist smooth Schubert varieties $X_w$ with $x,y\in H^*(X_w)$ such that $x\ast y$ has non-zero $q^\beta$-term for infinitely many curve classes $\beta$.
\end{thm}
We prove this by considering the three examples presented in Figure~\ref{fig:Schubert}, see Propositions~\ref{prop:Schubert:A3_GP}, \ref{prop:Schubert:A3_GB}, and \ref{prop:Schubert:G2_GB} below, respectively.

\begin{prop}\label{prop:Schubert:A3_GP}
    Let $X_w\subset \mathrm{Fl}(2,1,1)$ be the smooth Schubert variety on the left of Figure~\ref{fig:Schubert}.
    Then there exist $x,y\in H^*(X_w)$ such that the non-equivariant quantum product $x\ast y$ has infinitely many terms.
\end{prop}
\begin{proof}
   Let $x,y\in H_T^2(X_w)$ be the equivariant Poincar{\'e} duals of the GKM subgraphs induced by the vertices $\{s_3s_2, s_3, s_2, \text{id}\}$ and $\{s_1s_3s_2, s_1s_2, s_3, \text{id}\}$, respectively.
   Let $\beta\in H_2(X_w)$ be the curve class defined by the edge $e:=\{\text{id},s_3\}$ in the GKM graph.
   Note that this is the unique edge with $\mathcal{C}_1(e)=0$ and all other edges have ${\mathcal{C}_1(e)>0}$.
   This edge is also the intersection of the two GKM subgraphs defining $x$ and $y$, respectively.
   We will apply Theorem~\ref{thm:twisted_Qprod} to show that $x\ast y$ has a non-zero $q^{d\beta}$-term for every $d\in\mathbb{N}_{\ge 1}$.

   Near $e$, the GKM graph of $X_w$ looks like the local model $X_k$ in Figure~\ref{fig:localP1} with $k=0$.
   Then by Theorem~\ref{thm:twisted_Qprod} we have
   \begin{align*}
   x\ast_T y = x\smile y &+ \text{PD}(C_e)
   \left(\int^T_{C_e}x\right)
   \left(\int^T_{C_e}y\right)
   \sum_{d\ge 1}q^{d\beta}\\
   &+ \left(q^\alpha\text{-terms with }\int_\alpha c_1(T_{X_w})>0\right).
   \end{align*}
    The forgetful map $H_T^*(X_w)\rightarrow H^*(Z)$ sends the equivariant Poincar{\'e} dual $PD(C_e)$ to the ordinary Poincar{\'e} dual, which is non-zero.
    It also sends $x\ast_T y$ to $x\ast y$.
    Since $\int^T_{C_e}x=\int^T_{C_e}y=-1$, the quantum products $x\ast_T y$ and $x\ast y$ both have a non-zero $q^{d\beta}$-term for each $d\in \mathbb{N}_{\ge 1}$, as desired.
\end{proof}

\begin{prop}\label{prop:Schubert:A3_GB}
    Let $X_w\subset \mathrm{Fl}(1,1,1,1)$ be the smooth Schubert variety in the middle of Figure~\ref{fig:Schubert}.
    Then there exist $x,y\in H^*(X_w)$ such that the non-equivariant quantum product $x\ast y$ has infinitely many terms.
\end{prop}
\begin{proof}
    The proof is analogous to that of Proposition~\ref{prop:Schubert:A3_GP}, except that $x,y$ are defined by the GKM subgraphs with vertices $\{\text{id}, s_2, s_2s_1, s_1\}$ and $\{\text{id}, s_2, s_2s_3, s_3\}$, respectively.
    The rest of the proof carries over word by word.
\end{proof}

Any of the preceding two examples suffice to prove Theorem~\ref{thm:Schubert}.
The reason for including the following curious example in type $G_2$ is that we are only aware of an \emph{equivariant} quantum product with infinitely many terms, but not of a non-equivariant one.

\begin{prop}\label{prop:Schubert:G2_GB}
    Let $X_w\subset G_2/B$ be the smooth Schubert variety on the right of Figure~\ref{fig:Schubert}.
    Then there exists $x\in H_T^*(X_w)$ such that the equivariant quantum product $x\ast_T x$ has infinitely many terms.
\end{prop}

\begin{proof}
    Let $x$ be the equivariant Poincar{\'e} dual of the fixed point $\text{id}\in X_w$ and let $\beta$ be the curve class of the edge $e=\{s_1,\text{id}\}$.
    This is the unique primitive curve class with $\int_\beta c_1(T_{X_w})=0$ and all other primitive curve classes $\gamma$ have $\int_\gamma c_1(T_{X_w})>0$.
    
    We show for any $d\in\mathbb{N}_{\ge 1}$ that the $q^{d\beta}$-term in $x\ast_T x$ can be calculated in terms of a small neighbourhood of $e$ on the GKM graph.
    Indeed, recalling the localization formula in Theorem~\ref{thm:LS17}, we see that any decorated tree $\overrightarrow{\Gamma}$ contributing to $x\ast_T x$ must have at least one marked vertex sent to $\text{id}$.
    But as $\overrightarrow{\Gamma}$ is connected and no edge may be sent to an edge $e'$ with $\mathcal{C}_1(e')>0$, each vertex of $\overrightarrow{\Gamma}$ must be sent to $s_1$ or to $\text{id}$.
    Hence, the argument in the proof of Theorem~\ref{thm:twisted_Qprod} still applies and shows that the $q^{d\beta}$-terms in $x\ast_T x$ can be calculated in a small neighbourhood of $e$ on the GKM graph of $X_w$.

    The neighbourhood of $e$ is is modeled by $X_k$ with $k=1$ and $t_3=t_2 -3t_1$ (see Figure~\ref{fig:localP1}).
    A calculation similar to Theorem~\ref{thm:gw_equivariant_CY} shows that
    \[
    GW_{0,0}^{X_k,d\mathbf{0}_{X_k}} = \frac{-2t_1+t_2}{d^3t_2}.
    \]
    Arguing like in Theorem~\ref{thm:twisted_Qprod} then shows that the $q^{d\beta}$ term in $x\ast_T x$ is given by
    \[
    \mathrm{PD}(C_e)\left(\int^T_{C_e}x\right) ^2 \frac{-2t_1+t_2}{t_2}.
    \]
    Since $\int_{C_e}^T x = t_2t_3$, we have
    \begin{align*}
   x\ast_T x = x\smile x &+ t_2t_3^2(-2t_1+t_2)\,\text{PD}(C_e)
   \sum_{d\ge 1}q^{d\beta}\\
   &+ \left(q^\alpha\text{-terms with }\int_\alpha c_1(T_{X_w})>0\right).
   \end{align*}
    Hence, the coefficient of each $q^{d\beta}$ in $x\ast_T x$ is non-zero, as desired.
\end{proof}
\begin{rem}
    The forgetful map $H_T^*(X_w)\rightarrow H^*(X_w)$ sends $t_1,t_2,t_3$ to $0$, so the above argument does not show that $x\ast x$ has infinitely many terms.
    Indeed, we are not aware of any non-equivariant quantum product with infinitely many terms on this Schubert variety.
\end{rem}



\bibliographystyle{amsalpha}
\bibliography{refs}

\end{document}